\DeclareSymbolFont{sfoperators}{OT1}{ptm}{m}{n}
\DeclareSymbolFontAlphabet{\mathsf}{sfoperators}
\def\operator@font{\mathgroup\symsfoperators}
\numberwithin{equation}{section}
\newtheorem{thm}{Theorem}[section]
\newtheorem{defn}[thm]{Definition}
\newtheorem{lem}[thm]{Lemma}
\newtheorem{prop}[thm]{Proposition}
\newtheorem{assumption}[thm]{Assumption}
\theoremstyle{remark}
\newtheorem{rmk}[thm]{Remark}
\DeclareMathOperator{\id}{id}
\def\th@newremark{\th@remark\thm@headfont{\bfseries}}
\def\bdiamond{\mathop{\mathpalette\bdi@mond\relax}}
\newcommand\bdi@mond[2]{%
	\vcenter{\hbox{\m@th
			\scalebox{\ifx#1\displaystyle 2.6\else1.8\fi}{$#1\diamond$}%
	}}%
}
\def\bDiamond{\mathop{\mathpalette\bDi@mond\relax}}
\newcommand\bDi@mond[2]{%
	\vcenter{\hbox{\m@th
			\scalebox{\ifx#1\displaystyle 2.6\else1.2\fi}{$#1\Diamond$}%
	}}%
}
\definecolor{darkgreen}{rgb}{0.1,0.7,0.1}
\definecolor{darkred}{rgb}{0.7,0.1,0.1}
\definecolor{darkblue}{rgb}{0,0,0.7}
\newcommand{\aA}{\mathcal{A}}
\newcommand{\bB}{\mathcal{B}}
\newcommand{\cC}{\mathcal{C}}
\newcommand{\dD}{\mathcal{D}}
\newcommand{\fF}{\mathcal{F}}
\newcommand{\gG}{\mathcal{G}}
\newcommand{\hH}{\mathcal{H}}
\newcommand{\iI}{\mathcal{I}}
\newcommand{\oO}{\mathcal{O}}
\newcommand{\pP}{\mathcal{P}}
\newcommand{\qQ}{\mathcal{Q}}
\newcommand{\sS}{\mathcal{S}}
\newcommand{\tT}{\mathcal{T}}
\newcommand{\uU}{\mathcal{U}}
\newcommand{\fR}{\mathfrak{R}}
\newcommand{\fs}{\mathfrak{s}}
\newcommand{\fx}{\mathfrak{x}}
\newcommand{\fy}{\mathfrak{y}}
\newcommand{\cov}{{\operator@font cov}}
\newcommand{\var}{{\operator@font var}}
\newcommand{\corr}{{\operator@font corr}}
\newcommand{\diam}{{\operator@font diam}}
\newcommand{\Av}{{\operator@font Av}}
\newcommand{\trig}{{\operator@font trig}}
\newcommand{\Enh}{{\operator@font Enh}}
\newcommand{\EEnh}{\overline {\operator@font Enh}}
\newcommand{\KPZ}{\text{\tiny KPZ}}
\newcommand{\E}{\mathbf{E}}
\newcommand{\K}{\mathbf{K}}
\newcommand{\N}{\mathbf{N}}
\newcommand{\R}{\mathbf{R}}
\newcommand{\T}{\mathbf{T}}
\newcommand{\Z}{\mathbf{Z}}
\renewcommand{\k}{\mathbf{k}}
\renewcommand{\l}{\mathbf{\ell}}
\newcommand{\n}{\mathbf{n}}
\renewcommand{\r}{\mathbf{r}}
\renewcommand{\t}{\mathbf{t}}
\newcommand{\x}{\mathbf{x}}
\newcommand{\PPi}{\boldsymbol{\Pi}}
\newcommand*\Btheta{\ensuremath{\boldsymbol\theta}}
\def\set{{\mathfrak{u}}}
\newcommand{\hf}{\widehat{f}}
\newcommand{\hF}{\widehat{F}}
\newcommand{\hPi}{\widehat{\Pi}}
\def\Clus{\mathscr{C}}
\newcommand{\eps}{\varepsilon}
\colorlet{symbols}{blue}
\colorlet{testcolor}{green!60!black}
\def\${|\!|\!|}
\def\drawx{\draw[-,solid] (-3pt,-3pt) -- (3pt,3pt);\draw[-,solid] (-3pt,3pt) -- (3pt,-3pt);}
\tikzset{
	root/.style={circle,fill=testcolor,inner sep=0pt, minimum size=2mm},
	dot/.style={circle,fill=black,inner sep=0pt, minimum size=1mm},
	edot/.style={circle,fill=black,inner sep=0pt, minimum size=1mm},
	odot/.style={circle,draw=black,inner sep=0pt, minimum size=1mm},
	var/.style={circle,fill=black!10,draw=black,inner sep=0pt, minimum size=
	2mm},
    svar/.style={circle,fill=black!10,draw=black,inner sep=0pt, minimum size=
	1.5mm},
    noise0/.style={rectangle,draw=symbols,fill=white,inner sep=0pt, minimum size=1.5mm},
    noise1/.style={circle,draw=symbols,fill=white,inner sep=0pt, minimum size=1.5mm},
    noise2/.style={circle,draw=symbols,fill=symbols,inner sep=0pt, minimum size=1.5mm},
    noise3/.style={circle,draw=black,fill=black,inner sep=0pt, minimum size=1.5mm},
	dotred/.style={circle,fill=symbols!50,inner sep=0pt, minimum size=2mm},
	generic/.style={semithick,shorten >=1pt,shorten <=1pt},
	ageneric/.style={semithick},
	dist/.style={ultra thick,draw=testcolor,shorten >=1pt,shorten <=1pt},
	testfcn/.style={ultra thick,testcolor,shorten >=1pt,shorten <=1pt,<-},
	testfcnx/.style={ultra thick,testcolor,shorten >=1pt,shorten <=1pt,<-,
		postaction={decorate,decoration={markings,mark=at position 0.6 with {\drawx}}}},
	kepsilon/.style={semithick,shorten >=1pt,shorten <=1pt,densely dashed,->},
	kprimex/.style={semithick,shorten >=1pt,shorten <=1pt,densely dashed,->,
		postaction={decorate,decoration={markings,mark=at position 0.4 with {\drawx}}}},
	kernel/.style={semithick,shorten >=1pt,shorten <=1pt,->},
	akernel/.style={semithick,->},
	multx/.style={shorten >=1pt,shorten <=1pt,
		postaction={decorate,decoration={markings,mark=at position 0.5 with {\drawx}}}},
	kernelx/.style={semithick,shorten >=1pt,shorten <=1pt,->,
		postaction={decorate,decoration={markings,mark=at position 0.4 with {\drawx}}}},
	kernel1/.style={->,semithick,shorten >=1pt,shorten <=1pt,postaction={decorate,decoration={markings,mark=at position 0.45 with {\draw[-] (0,-0.1) -- (0,0.1);}}}},
	kernel2/.style={->,semithick,shorten >=1pt,shorten <=1pt,postaction={decorate,decoration={markings,mark=at position 0.45 with {\draw[-] (0.05,-0.1) -- (0.05,0.1);\draw[-] (-0.05,-0.1) -- (-0.05,0.1);}}}},
	kernelBig/.style={semithick,shorten >=1pt,shorten <=1pt,decorate, decoration={zigzag,amplitude=1.5pt,segment length = 3pt,pre length=2pt,post length=2pt}},
	gepsilon/.style={dotted,semithick,shorten >=1pt,shorten <=1pt},
	renorm/.style={shape=circle,fill=white,inner sep=1pt},
	labl/.style={shape=rectangle,fill=white,inner sep=1pt},
	xi/.style={circle,fill=symbols!10,draw=symbols,inner sep=0pt,minimum size=1.2mm},
	xix/.style={crosscircle,fill=symbols!10,draw=symbols,inner sep=0pt,minimum size=1.2mm},
	xib/.style={circle,fill=symbols!10,draw=symbols,inner sep=0pt,minimum size=1.6mm},
	xibx/.style={crosscircle,fill=symbols!10,draw=symbols,inner sep=0pt,minimum size=1.6mm},
	not/.style={circle,fill=symbols,draw=symbols,inner sep=0pt,minimum size=0.5mm},
	>=stealth,
  	highlight/.style={line width=7pt,blue,draw opacity=0.2,line cap=round,line join=round},
  	cover/.style={line width=7pt,blue,line cap=round,line join=round},
	smalldot/.style={circle,fill=symbols,draw=symbols, solid,inner sep=0pt,minimum size=0.5mm},
	}
\def\DeclareSymbol#1#2#3{\expandafter\gdef\csname MH@symb@#1\endcsname{\tikz[baseline=#2,scale=0.15,draw=symbols]{#3}}\expandafter\gdef\csname MH@symb@#1s\endcsname{\scalebox{0.5}{\tikz[baseline=#2,scale=0.15,draw=symbols]{#3}}}}
\def\<#1>{\csname MH@symb@#1\endcsname}
\setlist[itemize]{topsep=3pt,itemsep=1.5pt,parsep=0pt}
\def\scal#1{\langle#1\rangle}
\def\cent#1{\mathopen{{\langle\kern-0.3em\rangle}}#1\mathclose{{\langle\kern-0.3em\rangle}}}
\def\d{\partial}
\begin{document}

\title{A frequency-independent bound on trigonometric polynomials of Gaussians and applications}
\author{Fanhao Kong$^1$ and Wenhao Zhao$^2$}
\institute{Peking University, China, \email{fanhaokong@pku.edu.cn}
\and EPFL, Switzerland, \email{wenhao.zhao@epfl.ch}}

\maketitle

\begin{abstract}
	We prove a frequency-independent bound on trigonometric functions of a class of singular Gaussian random fields, which arise naturally from weak universality problems for singular stochastic PDEs. This enables us to reduce the regularity assumption on the nonlinearity of the microscopic models (for pathwise convergence) in KPZ and dynamical $\Phi^4_3$ in the previous works of Hairer-Xu and Furlan-Gubinelli to heuristically optimal thresholds required by PDE structures. 
\end{abstract}

\setcounter{tocdepth}{2}
\microtypesetup{protrusion=false}
\tableofcontents
\microtypesetup{protrusion=true}

\def\k{\mathbf{k}}

% \subsection*{Statements and Declarations}
% We declare that we do not have any commercial or associative interest that represents a conflict of interest in connection with the work submitted.

\section{Introduction}

The aim of this article is to prove a frequency-independent upper bound of moments for quantities of the form
\begin{equation}
    \label{e:main_form}
    \int \varphi^{\lambda}(x) \; \d_{\theta_\fx}^{r_1} \tT_{(m_1 - 1)} \big( \trig_{\pm}(\theta_{\fx}\eps^{\frac{\alpha}{2}} \Psi_\eps(x)) \big) \bigg( \int K(x,y) \; \d_{\theta_\fy}^{r_2} \tT_{(m_2 - 1)} \big( \trig_{\pm}(\theta_{\fy}\eps^{\frac{\alpha}{2}} \Psi_\eps(y) ) \big) dy \bigg) dx\;,
\end{equation}
where the integral is taken over $x,y \in \R^d$, $\{\Psi_\eps\}$ is a class of Gaussian random fields over $\R^d$ of ``singularity" $\frac{\alpha}{2}$, $\trig_{+}$ and $\trig_{-}$ denote cosine and sine functions respectively, $\theta_{\fx}$ and $\theta_{\fy}$ are the frequencies in the trigonometric functions, $\tT_{(m-1)}(\cdot)$ is the operator that removes the first $m-1$ chaos components from the random variable, $K$ is a suitable integration kernel with prescribed singularity at the origin, and $\varphi^\lambda$ is the smooth test function $\varphi$ with compact support rescaled at scale $\lambda \in (0,1)$. Quantities of the type \eqref{e:main_form} arise naturally from weak universality problems for singular stochastic PDEs, such as KPZ and dynamical $\Phi^4_3$.

\subsection{Motivation}
\label{sec:motivation}

The 1+1 dimensional KPZ equation is formally given by
\begin{equ} \label{e:kpz}
    \d_t h = \d_x^2 h + a (\d_x h)^2 + \xi, \quad (t,x) \in \R^{+} \times \T\;.
\end{equ}
Here, $\T$ is the one dimensional torus, $\xi$ is the space-time white noise, and $a \in \R$ denotes the coupling constant. The equation was first derived in \cite{KPZ86}, but is only formal since $\d_x h$ is distribution valued. The first rigorous statement for \eqref{e:kpz} came in \cite{BG97}, where the authors defined the solution via the Cole-Hopf transform from a linear equation with multiplicative noise. A different notion of solution, the energy solution, was introduced in \cite{Energy} and then shown to be unique in \cite{Energy_unique}. In \cite{HairerKPZ, rs_theory}, a pathwise solution theory was developed (see also \cite{GIP12,Reloaded}), where the solution to \eqref{e:kpz} is defined as the limit of the smooth solutions to the regularised and renormalised equation
\begin{equ}
    \d_t h_{\eps} = \d_x^2 h_{\eps} + a (\d_x h_{\eps})^2 + \xi_{\eps} - C_{\eps}, 
\end{equ}
where $\xi_\eps$ is a smooth approximation to $\xi$ at scale $\eps$, and $C_{\eps}=\frac{c}{\eps}+\oO(1)$ is the renormalisation constant that ensures the convergence of $h_\eps$ to a nontrivial limit. In fact, for every $a$, there is one degree of freedom in choosing the limit by the choice of the $\oO(1)$ counterterm. Hence, there is a one-dimensional family which are essentially the same up to the $\oO(1)$ counterterm. We denote this family by KPZ$(a)$.

The solution to the KPZ equation is expected to describe the universal large scale behaviour for a wide class of weakly asymmetric interface growth models. In \cite{HQ}, the authors considered microscopic growth models of the type
\begin{equ} \label{e:micro}
    \d_t \widetilde{h} = \d_x^2 \widetilde{h} + \sqrt{\eps} F(\d_x \widetilde{h}) + \widetilde{\xi}\,, \quad (t,x)\in\R\times(\T/\eps),
\end{equ}
where $F$ is an even polynomial and $\widetilde{\xi}$ is a smooth space-time Gaussian field with short range correlation on $\R\times(\T/\eps)$. They showed that there exists $C_\eps \rightarrow +\infty$ such that the rescaled and re-centered process
\begin{equ} 
\label{e:recentred_process}
h_{\eps}(t,x):= \sqrt{\eps} \ \widetilde{h} (t/\eps^2, x / \eps) - C_{\eps} t
\end{equ}
converges to the KPZ($a$) family of solutions, where the coupling constant $a$ depends on all the coefficients of $F$ except the constant term. 

Let us briefly explain the Hairer-Quastel universality result. The macroscopic process $h_\eps$ defined above satisfies the equation
\begin{equ} \label{e:kpz_macro}
\d_t h_{\eps} = \d_x^2 h_{\eps} +\eps^{-1} F(\sqrt{\eps} \d_x h_{\eps}) + \xi_{\eps} - C_{\eps},
\end{equ}
where $\xi_{\eps}(t,x) = \eps^{-\frac{3}{2}} \widetilde{\xi}(\frac{t}{\eps^2},\frac{x}{\eps})$ is a scale-$\eps$ approximation to $\xi$. Let $Z_{\eps}$ be the solution to the linear part of \eqref{e:kpz_macro} (that is, with $F$ and $C_\eps$ removed), and $\Psi_{\eps}:= \d_x Z_{\eps}$. Then the remainder $u_{\eps} := h_{\eps} - Z_{\eps}$ satisfies the equation
\begin{equ} \label{e:21_1}
	\d_{t}u_{\eps} = \d_x^2 u_{\eps} + \eps^{-1} F(\sqrt{\eps} \Psi_{\eps} + \sqrt{\eps}\d_{x}u_{\eps}) - C_{\eps}.
\end{equ}
Since $\sqrt{\eps} \Psi_{\eps}$ is asymptotically normal distributed and $\sqrt{\eps}\d_x  u_{\eps}$ is expected to have size $\eps^{\frac{1}{2}-}$, if $F$ has more than two derivatives, it is natural to Taylor expand $F$ near $\sqrt{\eps} \Psi_{\eps}$ to the second order to get
\begin{equation*}
	\begin{split}
		\eps^{-1}F(\sqrt{\eps} \Psi_{\eps} + \sqrt{\eps} \d_{x}u_{\eps}) = &\eps^{-1}F(\sqrt{\eps} \Psi_{\eps}) + \eps^{-\frac{1}{2}} F'(\sqrt{\eps} \psi_{\eps}) \, \d_{x}u_{\eps}\\
		&+ \frac{1}{2} F''(\sqrt{\eps} \Psi_{\eps}) \, ({\d_{x}u_{\eps}})^{2} + o_{\eps}(1)\;.
	\end{split}
\end{equation*}
If one shows the quantities $\eps^{-1}F(\sqrt{\eps}\Psi_{\eps}) - C_{\eps}$, $\eps^{-\frac{1}{2}}F'(\sqrt{\eps} \Psi_{\eps})$, $F''(\sqrt{\eps} \Psi_{\eps})$, as well as their certain products and with heat kernel convolutions (coming from multiplication of $\d_x u_\eps$) converge to the right functionals of Gaussian random fields, then one can establish the (pathwise) convergence of \eqref{e:kpz_macro} to KPZ($a$) for general $F$. This was first established in \cite{HQ} for even polynomials $F$ and later extended to $F \in \cC^{7+}$ in \cite{HX19}. See also \cite{KPZCLT, Chatterjee_kpz} for related models with non-Gaussian approximations to the space-time white noise. 

With the notion of energy solution, convergence in law of the process $h_\eps$ to the KPZ($a$) family was obtained in \cite{HQ_stationary} and \cite{HQ_non_stationary} when $F$ is only Lipschitiz.\footnote{Note that although \cite{HQ_stationary} and \cite{HQ_non_stationary} only assumes $F$ being Lipschitz, the results in \cite{HQ, HX19} and Theorem~\ref{thm:kpz} below are not included in it since the notions of convergence are different. Moreover, the notion of energy solution is available for the KPZ equation but not for $\Phi^4_3$ so far.} 

Similar weak universality results are also investigated for the dynamical $\Phi^4_3$ model on the three dimensional torus, formally given by
\begin{equation} \label{e:phi43}
    \d_t \phi = \Delta \phi - a \phi^3 + \xi\;, \qquad (t,x) \in \R^+ \times \T^3\;.
\end{equation}
Similar to \cite{HQ}, one can consider approximations to \eqref{e:phi43} via general phase-coexistence models of the type
\begin{equation} \label{e:phi43_macro}
    \d_t \phi_\eps = \Delta \phi_\eps - \eps^{-\frac{3}{2}} G(\sqrt{\eps} \phi_\eps) + \xi_\eps + C_\eps \phi_\eps\;,
\end{equation}
where $G$ is a nice odd function, $\xi_\eps$ is the scale-$\eps$ approximation to $\xi$ as previous, and $C_\eps$ is a renormalisation constant. Let $\Psi_\eps$ denote the solution to the linearised equation. Then $u_\eps := \phi_\eps - \Psi_\eps$ satisfies
\begin{equation*}
    \d_t u_\eps = \Delta u_\eps - \eps^{-\frac{3}{2}} G(\sqrt{\eps} \Psi_\eps + \sqrt{\eps} u_\eps) + C_\eps (\Psi_\eps + u_\eps)\;.
\end{equation*}
Similar as before, one expects that $\sqrt{\eps} \Psi_\eps$ is asymptotically normal distributed, and $\|\sqrt{\eps} u_\eps\|_{L^\infty} = \oO(\eps^{\frac{1}{2}-})$. Hence, if $G$ has more than three derivatives, we can Taylor expand $G$ at $\sqrt{\eps} \Psi_\eps$ up to the third order to get
\begin{equation} \label{e:phi43_general}
    \begin{split}
    \eps^{-\frac{3}{2}} G(\sqrt{\eps} \Psi_\eps + &\sqrt{\eps} u_\eps) =  \eps^{-\frac{3}{2}} G(\sqrt{\eps} \Psi_\eps) + \eps^{-1} G'(\sqrt{\eps} \Psi_\eps) \cdot u_\eps\\
    &+ \frac{1}{2 \sqrt{\eps}} G''(\sqrt{\eps} \Psi_\eps) \cdot u_\eps^2 + \frac{1}{6} G^{(3)}(\sqrt{\eps} \Psi_\eps) \cdot u_\eps^3 + o_\eps(1)\;.
    \end{split}
\end{equation}
If one can establish convergence of the above functionals of $\sqrt{\eps} \Psi_\eps$ (with suitable renormalisation) as well as their products (also with heat kernel convolutions) to the correct functionals of the Gaussians, then this implies the convergence of the process $\phi_\eps$ in \eqref{e:phi43_macro} to the dynamical $\Phi^4_3(a)$ model with $a$ depending on \textit{all details of $G$}. This was done in \cite{Phi4_poly} for $G$ odd polynomial and extended in \cite{Phi4_general} to $G \in \cC^{9+}$. 

Let us briefly explain the difference between polynomial and non-polynomial nonlinearities in \eqref{e:kpz_macro} and \eqref{e:phi43_macro}. If $F$ and $G$ are polynomials, convergence of the stochastic objects follows from direct chaos expansions of $F$ and $G$. However, for general non-polynomial functions, showing that these stochastic objects converge are nontrivial even for analytic functions. The main problem is that the chaos series is infinite, and it turns out that high moments of each term in the series is not summable unless the Fourier transforms of the nonlinearities decay faster than Gaussians. This problem was resolved in \cite{HX19} and \cite{Phi4_general} with different methods. In \cite{HX19}, the authors Fourier expand the nonlinearity, and develop a clustering method to control trigonometric functions of the Gaussian fields. This allows them to obtain a bound in the desired regularity space with polynomial dependence on the frequency, which in turn shows the convergence to KPZ for $F \in \cC^{7+}$. For the dynamical $\Phi_3^4$ model, a similar universality result was shown to hold for $G \in \cC^{9+}$ in \cite{Phi4_general}, where the authors developed a Malliavin calculus based method to control the stochastic objects. 

On the other hand, if $F$ is below $\cC^{2}$ or $G$ is below $\cC^3$, then it is not clear how to proceed with the heuristic Taylor expansions as above. In \cite{KPZ_Deterministic}, twice differentiability of the nonlinearity is also needed to get a scaling limit for the deterministic KPZ model. One expects that new ideas are needed to go below the threshold for pathwise convergence.

In this article, we further explore the method developed in \cite{HX19}, and prove a \textit{frequency independent} bound for an integral version of correlations for trigonometric polynomials of Gaussians with two frequencies. This extends the frequency-independent bound in \cite{Xu18} for the one-frequency case, and turns out to be sufficient to reduce the regularity requirement to $F \in \cC^{2+}$ for KPZ and $G \in \cC^{3+}$ for dynamical $\Phi^4_3$. We will explain the main obstacles and our idea to overcome it in Section~\ref{sec:proof}. We hope that the bound and the method of proving it might be useful in other situations as well. 

Finally, let us mention that recent developed methods for stochastic estimates via spectral gap (\cite{diagram_free_rs, bphz_spectral_gap}) are also likely to be used to treat the above situations. 

\subsection{Statement of the main bound}
\label{sec:statement}

Fix a scaling $\mathfrak{s} = (s_1, ..., s_d)$ on $\R^d$ and set $|\fs|=s_1+\dots+s_d$. The metric induced by $\mathfrak{s}$ is $|x|_{\mathfrak{s}} := \sup_{1\leq i \leq d} |x_i|^{\frac{1}{s_i}}$. Since the scaling is fixed, we simply write $|x|$ instead of $|x|_{\mathfrak{s}}$. To avoid mixing up the notations $|\fs|$ and $|x|$, we point out that all the notations $|\cdot|$ with respect to the variables in $\R^d$ except $|\fs|$ represent the metric induced by $\fs$. Let $\{\Psi_{\eps}\}_{\eps\in[0,1]}$ be a class of Gaussian random fields satisfying the following assumption.

\begin{assumption} \label{as:cor}
    $\Psi_{\eps}$ is centered and stationary Gaussian random field, and there exist $\alpha \in (0,|\fs|)$ and $\Lambda > 1$ such that
    \begin{equ}
        \frac{1}{\Lambda(|x-y|+\eps)^\alpha}\leq\E(\Psi_{\eps}(x)\Psi_{\eps}(y))\leq\frac{\Lambda}{(|x-y|+\eps)^\alpha}
    \end{equ}
    for all $x,y\in\R^d$ and all $\eps \in (0,1)$.
\end{assumption}
\begin{rmk}
    In application for the KPZ equation, $\Psi_{\eps} = \d_x P \ast \xi_{\eps}$, where $P$ is the heat kernel, $\ast$ is space-time convolution and $\xi_{\eps}$ is the mollified one-dimensional space-time white noise. For dynamical $\Phi^4_3$, it is taken to be $P\ast \xi_{\eps}$, where $\xi_{\eps}$ is the mollified three-dimensional space-time white noise. It is not hard to check that in both the cases the assumption is satisfied (See \cite[Proposition~4.1]{HX19} for example).
\end{rmk}
We use $X = \eps^{\frac{\alpha}{2}} \Psi_{\eps}(x)$ and $Y = \eps^{\frac{\alpha}{2}} \Psi_{\eps}(y)$ to denote the corresponding Gaussian random variable. For any centered Gaussian random variable $Z$ and $F: \R \rightarrow \R$ such that $F(Z)$ has finite second moment, let $C_n := \E F^{(n)}(Z) / n!$ be the coefficient of the $n$-th term in the chaos expansion of $F(Z)$, and
\begin{equation*}
    \tT_{(m)} \big( F(Z) \big) := F(Z) - \sum_{n \leq m} C_n Z^{\diamond n}
\end{equation*}
denote the random variable $F(Z)$ with the first $m$ chaos components removed. Here, $Z^{\diamond n}:=(\var Z )^{\frac{n}{2}} H_n(Z/\sqrt{\var Z})$ denotes the $n$-th Wick power of $Z$ where $H_n$ is the $n$-th Hermite polynomial.
For $\Btheta = (\theta_{\fx}, \theta_{\fy}) \in \R^2$, let
\begin{equ} \label{e:ff}
\fF(\Btheta,x,y) = \tT_{(m_1-1)} \big(\trig_{\zeta_1}(\theta_{\fx} X) \big) \; \tT_{(m_2-1)} \big(\trig_{\zeta_2}(\theta_{\fy} Y) \big),
\end{equ}
where $\zeta_j \in \{+,-\}$, $\trig_{+} = \cos$ and $\trig_{-} = \sin$. We take the convention that $m_j$ is odd when $\trig_{\zeta_j} = \sin$, and even when $\trig_{\zeta_j} = \cos$. For every $\lambda > 0$, every smooth $\varphi: \R^d \rightarrow \R$ and every $x \in \R^d$, let
\begin{equation*}
    \varphi^\lambda_x(y) := \lambda^{-|\fs|} \varphi\Big( \frac{y_1-x_1}{\lambda^{s_1}}, \dots, \frac{y_d - x_d}{\lambda^{s_d}} \Big)
\end{equation*}
be $\varphi$ centered at $x$ and rescaled at scale $\lambda$. We also write $\varphi^\lambda = \varphi_0^\lambda$. 

Let $\gamma > 0$ and $K_0: \R^d \rightarrow \R^+$ be compactly supported in the ball of radius $1$ containing the origin such that
\begin{equ} \label{e:kernel_assumption}
    \|K_0\|_{|\fs|-\gamma; p} := \sup_{|x|\leq 1, |k|<p} |x|^{|\fs|-\gamma + |k|}|D^kK_0(x)| < \infty.
\end{equ}
for every $p>0$. Let
\begin{equation*}
    r_e := \lceil \gamma-\frac{\alpha m_2}{2} \rceil \vee 0\;.
\end{equation*}
be the degree of positive renormalisation. Define the kernel
\begin{equ} \label{e:kernel}
    K(x, y) := K_0(x-y) - \sum_{|j|< r_e}\frac{x^j}{j!}D^jK_0(-y)\;.
\end{equ}
For $\eps, \lambda \in (0,1)$, also define the operator $\aA_{\eps,\lambda}$ by
\begin{equ} \label{e:operator}
(\aA_{\eps,\lambda} \fF)(\Btheta) = \int \varphi^{\lambda}(x) K(x,y) \fF(\Btheta,x,y) dx dy\;.
\end{equ}
Note that since $K_0$ is compactly supported, the integral in \eqref{e:operator} is taken in a finite region. So we suppose $|y|\leq 2$ throughout this paper without loss of generality.

\begin{assumption} \label{as:stochastic}
    Throughout this article, we assume the following constraints on the parameters $\alpha$, $m_1$, $m_2$, $\gamma$ and $|\fs|$:
    \begin{itemize}
        \item $0<\alpha m_1,\alpha m_2< |\fs|$\;, 
        \item $\alpha(m_1+m_2)\leq |\fs| + 2\gamma$\;, 
        \item $0<\gamma \leq \frac{|\fs|}{2}$\;.
    \end{itemize}
\end{assumption}

\begin{rmk}
    The first two assumptions are natural scaling constraints from \cite[Appendix]{HQ}. The additional technical assumption for $\gamma$ is used in the proof of Lemmas~\ref{le:region_smallness_int1} and~\ref{le:region_smallness_int2}. All relevant objects in KPZ and $\Phi^4_3$ satisfy all three constraints. 
\end{rmk}

We use the notation $\|\cdot\|_{2n} = (\E|\cdot|^{2n})^{\frac{1}{2n}}$ and use the relation $A\lesssim B$ to represent that there exists a constant $C$ (independent of some parameters) such that $A\leq CB$. The following is our main theorem.

\begin{thm}\label{thm:main_result}
	Suppose the Gaussian field $\Psi_{\eps}(x)$ and the constants used to define $\aA_{\eps,\lambda}$ satisfy the assumptions above. Then for every small $\eta > 0$, every $n\in \N$ and every $\r \in \N^2$, we have
	\begin{equ}
	\label{eq:main_result}
		\|\partial_{\Btheta}^{\r}(\aA_{\eps,\lambda} \fF)(\Btheta)\|_{2n} \lesssim \eps^{\frac{\alpha(m_1+m_2)}{2}-\frac{\alpha(m_1+m_2)}{2n}-\frac{\eta}{2}} \lambda^{-\frac{\alpha(m_1+m_2)}{2}+\gamma-\eta},
	\end{equ}
    uniformly over $\eps, \lambda \in (0,1)$, $\Btheta \in \R^2$ and smooth test functions $\varphi$ supported in the unit ball containing the origin with $\|\varphi\|_{L^\infty} \leq 1$, where $\d_{\Btheta}^{\r} = \d_{\theta_\fx}^{r_1} \d_{\theta_\fy}^{r_2}$. As a consequence, we have
	\begin{equ}
	\label{eq:main_result'}
		\|\partial_{\Btheta}^{\boldsymbol{r}}(\aA_{\eps,\lambda} \fF)(\Btheta)\|_{2n} \lesssim \eps^{\frac{\alpha(m_1+m_2)}{2}-\eta}\lambda^{-\frac{\alpha(m_1+m_2)}{2}+\gamma-\eta}.
	\end{equ}
\end{thm}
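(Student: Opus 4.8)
The plan is to estimate the $2n$-th moment directly, via Gaussian calculus together with a multiscale integral, after two preliminary reductions. Since we work on a probability space, $\|\,\cdot\,\|_{2n}\le\|\,\cdot\,\|_{2n'}$ whenever $n'\ge n$, so \eqref{eq:main_result'} will follow from \eqref{eq:main_result} by applying the latter with some $n'$ large enough that $\frac{\alpha(m_1+m_2)}{2n'}<\frac\eta2$ (and the same $\eta$), whence $\eps^{\frac{\alpha(m_1+m_2)}{2}-\frac{\alpha(m_1+m_2)}{2n'}-\frac\eta2}\le\eps^{\frac{\alpha(m_1+m_2)}{2}-\eta}$ for $\eps\in(0,1)$. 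Next we reduce to $\r=\0$. Using Assumption~\ref{as:cor} at coincident points one has $\sigma_X^2:=\E X^2\in[\Lambda^{-1},\Lambda]$, so the Wiener chaos expansion
\begin{equ}\label{e:plan-chaos}
\tT_{(m_1-1)}\big(\trig_{\zeta_1}(\theta_\fx X)\big)=e^{-\theta_\fx^2\sigma_X^2/2}\sum_{k\ge m_1}c_k(\theta_\fx)\,X^{\diamond k},\qquad |c_k(\theta_\fx)|\le\frac{|\theta_\fx|^k}{k!}
\end{equ}
(the sum over $k$ of the parity prescribed for $m_1$) has coefficients decaying faster than any geometric rate, and $\d_{\theta_\fx}^{r_1}$ acts only on the coefficients: it preserves the chaos support $\{k\ge m_1\}$ and the property $\sup_{\theta}\sum_{k\ge m_1}\big|\d_\theta^{r_1}\!\big(e^{-\theta^2\sigma_X^2/2}c_k(\theta)\big)\big|\,M^kk^N<\infty$ for every $M,N>0$, since the Gaussian prefactor absorbs all polynomial growth in $\theta$ and $k$. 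The same holds for the $Y$-factor, so $\d_\Btheta^\r\fF$ has the same shape as $\fF$ with harmless coefficients, and we may assume $\r=\0$.

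\medskip

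\noindent\textbf{Moment expansion and the diagram sum.} By Fubini,
\begin{equ}\label{e:plan-moment}
\|(\aA_{\eps,\lambda}\fF)(\Btheta)\|_{2n}^{2n}=\int\prod_{i=1}^{2n}\big[\varphi^\lambda(x_i)\,K(x_i,y_i)\big]\;\E\Big[\prod_{i=1}^{2n}\fF(\Btheta,x_i,y_i)\Big]\prod_{i=1}^{2n}dx_i\,dy_i\;.
\end{equ}
We evaluate $\E[\prod_i\fF(\Btheta,x_i,y_i)]$ by expanding each factor as in \eqref{e:plan-chaos} and applying Wick's theorem. This yields a sum over \emph{admissible diagrams} $D$: a diagram pairs the ``half-edges'' emitted by the $4n$ vertices $\{x_i,y_i\}_{i\le2n}$, each $x_i$ (resp.\ $y_i$) emitting at least $m_1$ (resp.\ at least $m_2$) half-edges because the lowest $m_1-1$ (resp.\ $m_2-1$) chaos components have been removed; an edge of $D$ joining $u,v$ with multiplicity $\mu_{uv}$ carries $\rho_\eps(u-v)^{\mu_{uv}}$, where $\rho_\eps(w):=\eps^\alpha\,\E\big(\Psi_\eps(\cdot)\Psi_\eps(\cdot+w)\big)$ satisfies $|\rho_\eps(w)|\le\Lambda\,(1\wedge\eps^\alpha|w|^{-\alpha})$ by Assumption~\ref{as:cor}. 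The crucial point is that the combinatorial weight $w_D$ of $D$ together with the $\Btheta$-dependent prefactor is bounded uniformly in $\Btheta$: the factorials produced by Wick pairings cancel the $1/k!$ in the coefficients of \eqref{e:plan-chaos}, while the residual powers of $\theta_\fx,\theta_\fy$ are killed by Gaussian factors $e^{-c(\theta_\fx^2+\theta_\fy^2)}$ with $c>0$ depending only on $n,\Lambda$. Hence it suffices to show that $\sum_D|w_D|\,J_D$ is bounded by the $2n$-th power of the right-hand side of \eqref{eq:main_result}, where
\begin{equ}\label{e:plan-target}
J_D:=\int\prod_{i=1}^{2n}\Big[\lambda^{-|\fs|}\one_{|x_i|\le\lambda}\;|K(x_i,y_i)|\Big]\prod_{\{u,v\}\in D}\big(1\wedge\eps^\alpha|u-v|^{-\alpha}\big)^{\mu_{uv}}\prod_{i=1}^{2n}dx_i\,dy_i\;,
\end{equ}
and the series over $D$ converges thanks to the cancellations just described.

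\medskip

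\noindent\textbf{Multiscale power counting.} Fix an admissible $D$ and let its \emph{clusters} be the connected components of $D$ on $\{x_i,y_i\}$. We bound $J_D$ by integrating the vertices cluster by cluster, from the finest scale $\eps$ outward to scale $\lambda$. The kernel inputs are $|K(x,y)|\lesssim|x-y|^{-|\fs|+\gamma}$ together with the improved bound $|K(x,y)|\lesssim|x|^{r_e}\,|y|^{-|\fs|+\gamma-r_e}$ valid for $|x|\lesssim|y|$, which is exactly the gain supplied by the Taylor subtraction in \eqref{e:kernel}; this gain is what renders the $y_i$-integrals convergent near the origin when $\tfrac{\alpha m_2}{2}>\gamma$, the choice $r_e=\lceil\gamma-\tfrac{\alpha m_2}{2}\rceil\vee0$ being the minimal degree that suffices. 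Rescaling $x_i\mapsto\lambda x_i$, $y_i\mapsto\lambda y_i$ and using these bounds reduces matters to a scale-$1$ integral with effective noise scale $\widetilde\eps:=\eps/\lambda\in(0,1)$; carrying out the $\widetilde\eps$-to-$1$ integration and counting homogeneities — using $0<\alpha m_1,\alpha m_2<|\fs|$ for the local integrability of the individual $\rho$-edges and of the kernel singularities, $\alpha(m_1+m_2)\le|\fs|+2\gamma$ for the integrability of the interaction between an $x$-cluster and a $y$-cluster linked through a kernel, and $\gamma\le\tfrac{|\fs|}{2}$ together with Lemmas~\ref{le:region_smallness_int1} and~\ref{le:region_smallness_int2} for the borderline regions — produces, for every admissible $D$ and after summing the convergent series, the bound $\big(\eps^{\frac{\alpha(m_1+m_2)}{2}-\frac{\alpha(m_1+m_2)}{2n}-\frac\eta2}\lambda^{-\frac{\alpha(m_1+m_2)}{2}+\gamma-\eta}\big)^{2n}$ for \eqref{e:plan-moment}. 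The extra factor $\eps^{-\frac{\alpha(m_1+m_2)}{2n}}$ in \eqref{eq:main_result} records the loss incurred by the worst admissible diagrams — those whose pairing between the $x$- and $y$-vertices leaves only a single ``free'' reference point (ranging over scale $\lambda$) instead of the $n$ reference points available for diagrams built from $n$ disjoint, internally saturated pairs; the latter already realise the optimal exponent $\eps^{\frac{\alpha(m_1+m_2)}{2}}$, so this deficit is an artefact of treating all diagrams by one uniform bookkeeping.

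\medskip

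\noindent\textbf{Main obstacle.} The one genuinely new difficulty, relative to the one-frequency bound of \cite{Xu18} (a single field, no kernel coupling two copies), is controlling the interaction of the two trigonometric factors inside the diagram sum: one must handle simultaneously the singularity of $K(x_i,y_i)$ (coupling $x_i$ to $y_i$) and the $\rho$-edges within and across the $x$- and $y$-clusters, and show that \emph{every} admissible diagram contributes with the advertised powers of $\eps$ and $\lambda$ — in particular that the degree-$r_e$ Taylor renormalisation of $K$ is exactly strong enough to absorb the most singular $y$-cluster. This is the regime in which the scaling constraint $\alpha(m_1+m_2)\le|\fs|+2\gamma$ is saturated, and it is the technical heart of the argument.
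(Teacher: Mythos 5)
Your proposal has a fatal gap at the heart of the ``Moment expansion and the diagram sum'' step: the claim that ``the combinatorial weight $w_D$ of $D$ together with the $\Btheta$-dependent prefactor is bounded uniformly in $\Btheta$'' because ``the factorials produced by Wick pairings cancel the $1/k!$ in the coefficients \ldots\ while the residual powers of $\theta_\fx,\theta_\fy$ are killed by Gaussian factors $e^{-c(\theta_\fx^2+\theta_\fy^2)}$'' is false. This is precisely the naive chaos expansion that the paper explicitly warns against (citing \cite[Remark~6.21]{HX19}): even though each single trigonometric factor carries a Gaussian prefactor $e^{-\theta^2\sigma^2/2}$, once you expand all $4n$ factors into chaos, apply Wick's theorem, and then take absolute values term by term (which you must do to interchange the series over $D$ with the $\vec{x},\vec{y}$-integration), the resulting sum behaves like $e^{c|\Btheta|^2}$ for some $c>0$ depending on $n$ and $\Lambda$ \emph{when the covariances among the points are of order one}. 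Concretely, consider the extreme case $X_1=\cdots=X_{2n}=X$: the prefactors give $e^{-n\theta_\fx^2\sigma^2}$, but $\sum_{\vec k}\prod_i\frac{\theta_\fx^{2k_i}}{(2k_i)!}\big|\E\prod X^{\diamond 2k_i}\big|$ grows like $e^{Cn^2\sigma^2\theta_\fx^2}$ because the Wick pairing count $(2K)!!$ overwhelms the $\prod(2k_i)!$. The signs of the chaos coefficients conspire to keep the true expectation bounded, but the termwise-absolute-value sum does not, and your diagrammatic bookkeeping has thrown the signs away. So $\sum_D|w_D|J_D$ does not control $\|\aA_{\eps,\lambda}\fF\|_{2n}^{2n}$ uniformly in $\Btheta$, and the argument collapses at this point.

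What the paper does instead is the entire technical content of the result: it covers $\{|x|\le2\}$ by boxes of side $L\eps$ for a large constant $L$ (chosen to dominate the $\Lambda$- and $n$-dependent constant $c$ above), performs the chaos expansion \emph{per spatial cluster} rather than per point, and exploits that cross-cluster covariances are $O(L^{-\alpha})$ to make the chaos series converge with a weight that is summable uniformly in $\Btheta$ once there is at least one singleton cluster carrying the dominant frequency (Lemmas~\ref{le:comparable_bound}, \ref{le:singleton_bound}, and \ref{le:coefficients_bound_2} in Appendix~\ref{sec:singleton_proof}). The residual regime — $|\theta_\fx|\gg|\theta_\fy|$ while every $x_i$ is clustered, so there is no $x$-singleton to produce Gaussian decay in $\theta_\fx$ — is handled by the new localisation argument of Proposition~\ref{pr:main_idea}: Hölder/Minkowski reduce this piece to the extremal configuration $x_1=\cdots=x_{2n}$, Lemma~\ref{le:fixed_bound} then gives a pointwise bound at the price of $\eps^{-\alpha(m_1+m_2)/(2n)}$, and this loss is compensated by the small volume of $\sS_{2n}^c$ (the no-singleton region) via Lemmas~\ref{le:region_smallness_int1} and~\ref{le:region_smallness_int2}. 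Your proposal does not identify the frequency case split, the geometric clustering at scale $L\eps$, or the $\sS_{2n}$ versus $\sS_{2n}^c$ dichotomy; and the offered heuristic for the $\eps^{-\alpha(m_1+m_2)/(2n)}$ factor (``worst admissible diagrams leaving one free reference point'') is not what actually produces it. The reduction of \eqref{eq:main_result'} to \eqref{eq:main_result} and the kernel/power-counting ingredients in your ``multiscale'' paragraph are sound, but they are the routine part; the frequency-uniformity claim you assert without proof is where the whole difficulty lives.
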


\begin{rmk}
    The bound \eqref{eq:main_result'} follows from \eqref{eq:main_result} by taking $n$ sufficiently large (but independent of $\eps$ and $\lambda$) and applying Jensen inequality. \eqref{eq:main_result} may seem confusing at first sight since the left hand side increases as $n$ gets larger while the right hand side decreases. Our point is that \eqref{eq:main_result} should be viewed as both sides are raised to their $2n$-th powers, and then a factor $\eps^{-\alpha(m_1+m_2)}$ is lost for some technical reasons in Lemma~\ref{le:fixed_bound}, but its exponent is independent of $n$. Actually, we treat \eqref{eq:main_result} as a technical intermediate step of proving \eqref{eq:main_result'}. 
\end{rmk}

\subsection{Applications to weak universality problems}

% \textcolor{blue}{We will apply Theorem~\ref{thm:main_result} to second order processes in KPZ and dynamical $\Phi_3^4$ model. The method in \cite{HX19} implies that the regularised processes converge to their corresponding limiting processes, and our main result is used to show the convergence of the remainder part to $0$. Notice that our definition of $\fF$ in \eqref{e:ff} corresponds to the processes without renormalisation, but our result still holds since the divergence is logarithmic growth.}

In Section~\ref{sec:spde} below, we will apply Theorem~\ref{thm:main_result} to KPZ and dynamical $\Phi^4_3$ models to reduce regularity requirements of the nonlinearity to the heuristic level discussed above. In this section, we first specify our requirements for the nonlinearity, and give statements on KPZ and $\Phi^4_3$. 

\begin{defn} \label{def:nonlinearity_space}
    For $k \in \N$ and $\beta \in [0,1)$, the class $\cC_{\pP}^{k,\beta}(\R, \R)$ consists of functions $F: \R \rightarrow \R$ such that there exists $C, M>0$ such that
    \begin{equation*}
        \sup_{0 \leq \ell \leq k} |F^{(\ell)}(u)| \leq C (1 + |u|)^{M}\;, \qquad \sup_{|h|<1} \frac{|F^{(k)}(u+h) - F^{(k)}(u)|}{|h|^\beta} \leq C(1 + |u|)^{M}
    \end{equation*}
    for all $u \in \R$. We denote it by $\cC_{\pP}^{k,\beta}$ for simplicity. 
\end{defn}

In what follows, we take $\xi_\eps$ in \eqref{e:kpz_macro} and \eqref{e:phi43_macro} to be the space-time mollification of the space-time white noise $\xi$ such that
\begin{equation*}
    \xi_\eps \overset{law}{=} \xi * \rho_\eps\;,
\end{equation*}
where $\rho$ is a space-time mollifier, and $\rho_\eps(x) := \eps^{-|\fs|} \rho(x_1/\eps^{s_1}, \dots, x_d / \eps^{s_d})$. Here $x$ denotes the $d$-dimensional space-time point, not just space. The space-time dimension and (parabolic) scaling in KPZ and $\Phi^4_3$ are $d=2, \fs = (2,1)$ and $d=4, \fs = (2,1,1,1)$ respectively. In the case of KPZ, we further assume $\rho$ is symmetric in the space variable $x_2$.

For $\gamma\in (1,2), \eta\in (0,1)$ and $\eps \in (0,1)$, we define the norm $\|\cdot\|_{\gamma,\eta;\eps}$ as 
\begin{equ}
    \|u\|_{\gamma,\eta;\eps} := \|u\|_{\cC^{\eta}} + \frac{\|Du\|_{L^{\infty}}}{\eps^{\eta-1}} + \|Du\|_{L^{\infty}} + \sup_{\substack{|x-y|\leq \eps\\x\neq y} }\frac{|Du(x)-Du(y)|}{\eps^{\eta - \gamma}|x-y|^{\gamma-1}}\;.
\end{equ}
For $\gamma\in (1,2), \eta<0$ and $\eps \in (0,1)$, we define the norm $\|\cdot\|_{\gamma,\eta;\eps}$ as 
\begin{equ}
    \|u\|_{\gamma,\eta;\eps} := \|u\|_{\cC^{\eta}} + \frac{\|u\|_{L^{\infty}}}{\eps^{\eta}} + \sup_{\substack{|x-y|\leq \eps\\x\neq y} }\frac{|Du(x)-Du(y)|}{\eps^{\eta - \gamma}|x-y|^{\gamma-1}}\;.
\end{equ}
One should think of it as a norm describing $\cC^{\gamma}$-norm at smaller scales (smaller than $\eps$) and $\cC^{\eta}$-norm at larger scales. We denote the space of functions with finite $\|\cdot\|_{\gamma,\eta;\eps}$ norm by $\cC^{\gamma,\eta}_\eps$. 

The following theorem is our result for the KPZ equation. 

\begin{thm} \label{thm:kpz}
    Let $\beta \in (0,1)$ and $F \in \cC_{\pP}^{2,\beta}$ be an even function with growth power $M$ (as in Definition \ref{def:nonlinearity_space}). Suppose $h_\eps(0, \cdot)\in\cC^{\gamma,\eta}_\eps$ is a class of functions on $\T$ such that there exists $h(0, \cdot) \in \cC^{\eta}$ such that $\|h_\eps(0, \cdot), h(0, \cdot)\|_{\gamma,\eta;\eps} \rightarrow 0$ in the sense of \cite[eq.(3.6)]{HX19} for some $\gamma \in (\frac{3}{2}, \frac{5}{3})$ and $\eta \in (\frac{1}{2}-\frac{1}{M+4}, \frac{1}{2})$\footnote{The notion of convergence of the initial data is roughly ``$\cC^{\eta}$ at large scales and $\cC^\gamma$ at small scales". The precise notion requires introduction of weighted spaces to overcome the non-integrable singularity at $t =0$. Since these have been treated in \cite{HQ, HX19} and does not require any modifications here, so we omit it for conciseness of the article and refer the readers to the references.}. Then there exists $C_\eps\rightarrow +\infty$ such that for every $\kappa>0$, the process $h_\eps$ in \eqref{e:kpz_macro} with initial data $h_\eps(0,\cdot)$ converges in probability in $C^{\eta}([0,1] \times \T)$ to the KPZ($a$) family with initial data $h(0, \cdot)$, where
    \begin{equation} \label{e:a_1}
        a = \frac{1}{2} \int_{\R} F''(x-y) \, \mu(dy)\;,
    \end{equation}
    and $\mu$ is the law of the Gaussian variable $(\d_x P * \widetilde{\xi})(0)$, where $\widetilde{\xi}$ is the smooth Gaussian field in microscopic model \eqref{e:micro} viewed on the whole space.\footnote{Although $\widetilde{\xi}$ depends on $\eps$, it can be viewed on the whole space as $\eps\rightarrow0$. The same convention is used in Theorem~\ref{thm:phi43}.}
\end{thm}

We have a similar result for the dynamical $\Phi^4_3$ equation. 

\begin{thm} \label{thm:phi43}
    Let $\beta \in (0,1)$, and $G \in \cC_{\pP}^{3,\beta}$ be an odd function. Let $M$ be the growth exponent for $G$ as in Definition~\ref{def:nonlinearity_space}. Suppose $\phi_\eps(0, \cdot)\in\cC^{\gamma,\eta}_\eps$ is a class of functions on $\T^3$ such that $\|\phi_\eps(0,\cdot),\phi(0,\cdot) \|_{\gamma,\eta;\eps} \rightarrow 0$ for some $\gamma \in (1,\frac{6}{5})$ and $\eta \in (-\frac{1}{2} - \frac{1}{2M}, -\frac{1}{2})$ in the sense of \cite[Definition~3.3]{Phi4_poly}, then there exists $C_\eps \rightarrow +\infty$ such that for every $\kappa>0$, the processes $\phi_\eps$ in \eqref{e:phi43_macro} with initial data $\phi_\eps(0,\cdot)$ converges in probability in $\cC^{-\frac{1}{2}-\kappa}([0,1] \times \T^3)$ to the dynamical $\Phi^4_3(a)$ family with initial data $\phi(0,\cdot)$, where
    \begin{equation} \label{e:a_2}
        a = \frac{1}{6} \int_{\R} G^{(3)}(x-y) \mu (dy)\;,
    \end{equation}
    and $\mu$ is the law of the Gaussian variable $(P * \widetilde{\xi})(0)$, where $\widetilde{\xi}$ is the smooth Gaussian field in the corresponding microscopic model for $\Phi^4_3$ viewed on the whole space. 
\end{thm}

\begin{rmk}
    The bound \eqref{eq:main_result} only involves Wick renormalisation, but both KPZ and $\Phi^4_3$ have stochastic objects that require renormalisations beyond Wick ordering. In fact, \eqref{eq:main_result} is used in combination with a trick from \cite{HX19} that splits every stochastic object into a large ``nice" part and a small part, and also with \cite[Theorem~6.2]{HX19} that shows convergence of the large ``nice" part to the desired limit with minimal assumption on the nonlinearity. Thanks to the fact that all second order divergences (beyond Wick) in KPZ and $\Phi^4_3$ are only \textit{logarithmic} in $\eps$, the smallness in the remainder is enough to kill the logarithmic divergence. This is why one can ignore second order renormalisations for the remainder, and apply Theorem~\ref{thm:main_result} to it to reduce the regularity requirement for the nonlinearity. 
\end{rmk}

\begin{rmk}
    Another subtlety arising from the KPZ equation is that there is an object with three appearances of the nonlinearity $F$ (and its derivative), and hence with three frequencies. So it is beyond the scope of Theorem~\ref{thm:main_result}. Here, we made use of the smallness of the ``remainder term" for this stochastic object in a way that reduces the remainder term to an analytically well-defined product between two simpler stochastic objects (with one and two frequencies). This will be done in detail in Section~\ref{sec:reg_0}. 
\end{rmk}

% \begin{rmk}
%     The proof of the above two theorems depend on the theory of regularity structures developed in \cite{rs_theory}, which enables one to lift the equations \eqref{e:kpz_macro} and \eqref{e:phi43_macro} to their corresponding abstract equations. The proof consists of two parts: the well-posedness and convergence of the abstract equations, and the convergence of stochastic objects. The first part is deterministic, and is already treated in \cite{HX19}. The second part is probabilistic. In \cite[Section~6]{HX19}, the authors established a pointwise bound on trigonometric functions of certain Gaussian fields with polynomial dependence on frequencies (\cite[Theorems 6.2 and 6.4]{HX19}). The main result of this article, Theorem~\ref{thm:main_result}, establishes a frequency-independent bound for an integral version of the two-frequency case. It turns out to be sufficient to prove convergence of the stochastic objects in both KPZ and dynamical $\Phi^4_3$ models with a heuristically optimal regularity assumption of the nonlinearity. 
% \end{rmk}

\subsection*{Notations}

For $p \geq 1$, we use $\|\cdot\|_{p}$ to denote the norm $\big( \E |\cdot|^{p} \big)^{\frac{1}{p}}$. For $\alpha<0$, we define the negative H\"older norm $\|\cdot\|_{\cC^{\alpha}}$ on distributions by
\begin{equation*}
    \|f\|_{\cC^{\alpha}} := \sup_{z\in\R^{d}} \sup_{\lambda\in(0,1)} \sup_{\varphi} \lambda^{-\alpha} |\scal{f,\varphi^{\lambda}_{z}}|,
\end{equation*}
where the test function $\varphi$ is taken over all functions supported in the unit ball with $\|\varphi\|_{\cC^{\lceil-\alpha\rceil}} \leq 1$. 

The relation $A\lesssim B$ implies that there exists a constant $C$ (independent of some parameters) such that $A\leq CB$. The relation $A\lesssim_n B$ implies that the proportionality constant $C$ depends on $n$. The relation $\gtrsim$ and $\gtrsim_n$ are defined similarly.

% For the positive H\"older norm, we also have a test function characterisation for $\|\cdot\|_{\cC^{\alpha}}(\alpha\in\R^{+}\setminus\N)$ by
% \begin{equation*}
%     \|f\|_{\cC^{\alpha}} := \sup_{z\in\R^{d}} \sup_{\lambda\in(0,1)} \sup_{\varphi} \lambda^{-\alpha} |\scal{f-P_{z}f,\varphi^{\lambda}_{z}}|,
% \end{equation*}
% where $P_{z}f$ is the first $[\alpha]$-th order Taylor expansion for $f$ at the point $z$, and the test function $\varphi$ is taken over all functions supported in the unit ball with $\|\varphi\|_{\cC^0} \leq 1$. 

We denote the Wick product between Gaussian random variables with $\diamond$, for example $X^{\diamond a} \diamond Y^{\diamond b}$. For Gaussian random variable $Z$, function $F: \R \rightarrow \R$ such that $\E |F(Z)|^2 < +\infty$ and $m \in \N$, we define
\begin{equation*}
    \tT_{(m-1)} \big( F(Z) \big) := \sum_{n \geq m} C_n Z^{\diamond n}
\end{equation*}
to be the random variable $F(Z)$ with the first $m-1$ Wiener chaos component removed, and $C_n := \E F^{(n)}(Z) / n!$ is the coefficient of the $n$-th term in the chaos expansion. 

Finally, we define the Fourier transform $\hf$ of a function $f$ by
\begin{equation*}
    f(x) = \int \hf(\theta) e^{i\theta x} dx\;.
\end{equation*}

\subsection*{Organisation of the article}
This paper is organised as follows. We prove our main bound Theorem~\ref{thm:main_result} in Section~\ref{sec:proof}. In Section~\ref{sec:spde}, we apply the bound to weak universality problems for KPZ and dynamical $\Phi^4_3$ equations, and show that Theorem~\ref{thm:main_result} enables us to reduce the assumptions on the nonlinearities in \cite[Assumption~1.1]{HX19} and \cite[Assumption~1]{Phi4_general}. Finally in Appendix~\ref{sec:singleton_proof}, we prove some pointwise correlation bounds stated in Section~\ref{sec:correlation_bounds} and used in the proof of Theorem~\ref{thm:main_result}. 

\subsection*{Acknowledgments}
We thank Weijun Xu for suggesting the question and helpful discussions. We are also grateful to an anonymous referee for useful comments and suggestions on a previous draft of this work. Wenhao Zhao is partially supported by the elite undergraduate training program of School of Mathematical Sciences in Peking University.

\section{Proof of Theorem~\ref{thm:main_result}}
\label{sec:proof}

It suffices to prove the bound \eqref{eq:main_result}. We fix $\eps, \lambda \in (0,1)$. All proportionality constants below will be independent of $\eps$ and $\lambda$. Fix $n \in \N$, and write
\begin{equation*}
    \vec{x} = (x_1, \dots, x_{2n}) \in (\R^d)^{2n}\;, \qquad \vec{y} = (y_1, \dots, y_{2n}) \in (\R^d)^{2n}\;.
\end{equation*}
For $j = 1, \dots, 2n$, we write $X_j = \eps^{\frac{\alpha}{2}} \Psi_\eps(x_j)$ and $Y_j = \eps^{\frac{\alpha}{2}} \Psi_\eps(y_j)$. We also write $X,Y$ for $\eps^{\frac{\alpha}{2}}\Psi_{\eps}(x)$ and $\eps^{\frac{\alpha}{2}}\Psi_{\eps}(y)$. These quantities all depend on $\eps$, but since our bounds will be independent of $\eps$, we omit its dependence in notation for simplicity. 

With these notations, we have
\begin{equation} \label{e:main_expression}
    \begin{split}
    \|\big(\d_{\Btheta}^{\r} \aA_{\eps, \lambda} \fF\big)(\Btheta)\|_{2n}^{2n} = &\int \cdots \int \Big( \prod_{i=1}^{2n} \varphi^\lambda(x_i) \Big) \; \Big( \prod_{i=1}^{2n} K(x_i, y_i) \Big)\\
    &\E \prod_{i=1}^{2n} \Big( \d_{\theta_\fx}^{r_1}\tT_{(m_1-1)}\big( \trig_{\zeta_1} (\theta_\fx X_i) \big) \, \d_{\theta_\fy}^{r_2} \tT_{(m_2-1)} \big( \trig_{\zeta_2} (\theta_\fy Y_i) \big) \Big) d \vec{y} d \vec{x}\;.
    \end{split}
\end{equation}
Note that $\varphi$ is taken to be a test function with compact support in the unit ball $B(0,1)$, so we have $\operatorname{supp}(\varphi^{\lambda}) \subset B(0,\lambda)$. Also, our assumption for $K$ implies that $K(x_i,y_i) = 0$ if $|x_i-y_i| > 1$. Therefore, we can restrict the domain of the integral to $|x_i|,|y_i| \leq 2$. Moreover, we partition such integration domain into different parts, and will treat them differently. Let $L = 3n L_0$, where $L_0$ is the large constant defined at the beginning of Appendix~\ref{sec:singleton_proof}. Its value depends on $n$, $\r$ and $\Lambda$ but is independent of $\eps$, $\lambda$ and $\Btheta$. Let $\sS_{2n}$ be the configuration of $2n$ space-time points in $\R^d$ such that
\begin{equation*}
    \sS_{2n} := \Big\{ \vec{z} = (z_1, \dots, z_{2n}): \; \exists \; i \; \text{such that} \; |z_i - z_j| > L \eps \; \text{for all} \; j \neq i \Big\}\;.
\end{equation*}
For each $m \geq 2$, let
\begin{equation*}
\begin{split}
    \cC_m := \Big\{ &(z_1, \dots, z_{m}): \; \text{For every}\; 1\leq j_1<j_2\leq m,\;\exists \; I=\{i_1,\dots,i_\ell\}\subset[m]\;\\&\text{such that}\; |z_{i_{k+1}}-z_{i_k}|\leq L\eps\; \text{for all}\; 0\leq k\leq \ell \;(i_0=j_1;\;i_{\ell+1}=j_2) \Big\}\;.
\end{split}
\end{equation*}
In particular, points in $\cC_m$ are at most $m L \eps$ away from each other. Then we have
\begin{equation}
\label{e:no_singleton_volume}
    \big| \sS_{2n}^{c} \cap \{ |\vec{z}| \leq 2 \lambda \} \big| \lesssim_n (\eps \wedge \lambda)^{n|\fs|} \cdot \lambda^{n|\fs|}\;. 
\end{equation}
Furthermore, for every $\vec{z} \in \sS^c_{2n}$, there exists a partition $\pP$ of $\{1,\dots,2n\}$ such that each group $\set \in \pP$ contains at least two elements (hence $|\pP| \leq n$) and such that
\begin{equation}
\label{e:partition_cluster}
    \vec{z}|_{\set} \in \cC_{|\set|}
\end{equation}
for every $\set\in\pP$, where $\vec{z}|_{\set}$ denote the components in $\vec{z}$ that correspond to $\set \subset \{1, \dots, 2n\}$.

To prove Theorem~\ref{thm:main_result}, we first recall the results from \cite{HX19}. \cite[Remark~6.21]{HX19} told us that if we naively chaos expand the trigonometric function of the Gaussians on the right hand side of \eqref{e:main_expression} and control the high moments of each term, then we will end up with a similar bound to \eqref{eq:main_result} with a factor $e^{c (1+|\Btheta|)^2}$ for some $c>0$, where $|\Btheta| = |\theta_\fx| + |\theta_\fy|$.

To remove this inverse Gaussian factor, in \cite{HX19}, the authors clustered the points $\{x_i\}_{i=1}^{2n},\{y_i\}_{i=1}^{2n}$ into some clusters with radius $(1+|\Btheta|)^2 \eps$. Then we can take the points in the same cluster as the same point since the correlation of the Gaussian random variables $X_i,X_j$ in the same cluster can be bounded by $(1+|\Btheta|)^{-2}$ from below. We perform a chaos expansion over the clusters, which has less terms than direct chaos expansion. The choice of the clustering distance yields a similar bound to \eqref{eq:main_result} with a polynomial factor with respect to $\Btheta$.

In the case of one frequency, \cite{Xu18} obtained a frequency-independent bound by making the clustering distance to be $L \eps$ for sufficiently large $L$ independent of the frequency. In this case, one obtains a Gaussian factor $e^{-C \theta^2}$ for a sufficiently large $C$ if $\vec{x} \in \sS_{2n}$. This allows to cancel out the inverse Gaussian factor $e^{c\theta^2}$ in the case of one frequency. The argument also applies to two-frequency situation if the two frequencies are close to each other, that is, $|\theta_\fx| \sim |\theta_\fy|$ in our situation.

The obstacle comes when $|\theta_\fx| \gg |\theta_\fy|$. In this case, the similar argument to \cite{Xu18} yields a Gaussian factor $e^{-C \theta_\fx^2}$ if the point configuration is $\vec{x} \in \sS_{2n}$, which is sufficient to cancel out the growth $e^{c (|\theta_\fx|^2+|\theta_\fy|^2)}$. But if $\vec{x} \in \sS_{2n}^{c}$ and $\vec{y} \in \sS_{2n}$, then one only gets the decay factor $e^{-C |\theta_\fy|^2}$, which is insufficient to cancel out the growth $e^{c (|\theta_\fx|^2+|\theta_\fy|^2)}$.

In this situation, one observes that there is a small volume factor from the integration over $\vec{x} \in \sS_{2n}^{c}$ (see \eqref{e:no_singleton_volume}). Now, the natural way to bound the right hand side of \eqref{e:main_expression} independent of $\Btheta$ is to replace the expectation part by the obvious upper bound $1$. But if we do this, it turns out that the small volume factor $|\sS_{2n}^{c}|$ is still insufficient to match the correct power of $\eps$ in \eqref{eq:main_result}. Also, it does not seem obvious to obtain a frequency-independent bound for the expectation part except $1$. 

The main idea to overcome this issue is the localisation argument in Proposition~\ref{pr:main_idea} which, on the one hand keeps the small volume factor from $\vec{x} \in \sS_{2n}^{c}$, and on the other hand gains certain positive powers of $\eps$ from the factor $Y^{\diamond m_2} = \eps^{\frac{\alpha m_2}{2}} \Psi_\eps^{\diamond m_2}(y)$. 
Combining together the smallness from two different sources resolves the case with point configuration $\vec{x} \in \sS_{2n}^{c}$ and $\vec{y} \in \sS_{2n}$. Exactly the same argument applies to the case $|\theta_\fy| \gg |\theta_\fx|$ and $\vec{y} \in \sS_{2n}^{c}$. 

Now we turn to the proof of \eqref{eq:main_result}. The following property of the kernel will be used throughout the section. 

\begin{prop} \label{pr:kernel_bound}
    If $r_e=0$, the kernel $K$ satisfies the bound
    \begin{equation*}
        |K(x,y)| \lesssim \frac{1}{|x-y|^{|\fs|-\gamma}}\;.
    \end{equation*}
    If $r_e \geq 1$, then the kernel $K$ satisfies the bounds
    \begin{equation} \label{e:kernel_bound}
        |K(x,y)| \lesssim \left\{
        \begin{array}{ll}
        \frac{|x|^{r_e}}{|y|^{|\fs|-\gamma+ r_e}}\;, & |y| > 2|x|\;,\\
        \\
        \frac{1}{|x-y|^{|\fs|-\gamma}}\;, & \frac{|x|}{2} < |y| \leq 2 |x|\;,\\
        \\
        \frac{|x|^{r_e -1}}{|y|^{|\fs|-\gamma + r_e -1}}\;, & |y| \leq \frac{|x|}{2}
        \end{array} \right.
    \end{equation}
    uniformly over all $(x,y)$ within the above domain. 
\end{prop}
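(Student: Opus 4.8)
The plan is to handle the three regimes in the statement separately; only the first carries any real content. I would start with the trivial case $r_e=0$, where $K(x,y)=K_0(x-y)$: since $K_0$ is supported in the unit ball, \eqref{e:kernel_assumption} with $k=0$ gives $|K_0(z)|\lesssim|z|^{-(|\fs|-\gamma)}$ for every $z\neq0$ (the estimate being vacuous when $|z|>1$, where $K_0$ vanishes), which is precisely the asserted bound. For $r_e\ge1$ the same observation upgrades \eqref{e:kernel_assumption} to
\[
|D^kK_0(z)|\ \lesssim\ |z|^{-(|\fs|-\gamma)-|k|}\qquad\text{for every }z\neq0
\]
and every multi-index $k$ (here and below $|k|$ denotes the $\fs$-scaled degree). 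I would also record that, since every $s_i\ge1$, the scaled metric $|\cdot|$ satisfies the genuine triangle inequality, so that $|x-y|\sim|y|$, and in fact $|tx-y|\sim|y|$ for all $t\in[0,1]$, on $\{|y|>2|x|\}$; that $|x-y|\sim|x|$ on $\{|y|\le|x|/2\}$; and that $|x|\sim|y|$ and $|x-y|\lesssim|y|$ on $\{|x|/2<|y|\le2|x|\}$, with all implicit constants depending only on $\fs$.

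The main step is the regime $|y|>2|x|$. Viewing $g(x):=K_0(x-y)$ as a function of $x$, I would recognise $K(x,y)=g(x)-\sum_{|j|<r_e}\tfrac{x^j}{j!}D^jg(0)$ as the remainder of the Taylor expansion of $g$ at the origin after subtracting the monomials of $\fs$-scaled degree $<r_e$, and invoke the Taylor formula adapted to the scaling $\fs$ (see, e.g., \cite[Prop.~A.1]{rs_theory}), which yields
\[
|K(x,y)|\ \lesssim\ \sum_{k}|x|^{|k|}\,\sup_{t\in[0,1]}\big|D^kK_0(tx-y)\big|\;,
\]
the sum ranging over a finite family of multi-indices with $r_e\le|k|<r_e+\max_{1\le i\le d}s_i$. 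Since $|tx-y|\sim|y|$ throughout this regime, the pointwise bound above gives $|D^kK_0(tx-y)|\lesssim|y|^{-(|\fs|-\gamma)-|k|}$, whence each summand is at most
\[
|x|^{|k|}|y|^{-(|\fs|-\gamma)-|k|}\ =\ \frac{|x|^{r_e}}{|y|^{|\fs|-\gamma+r_e}}\,\Big(\frac{|x|}{|y|}\Big)^{|k|-r_e}\ \lesssim\ \frac{|x|^{r_e}}{|y|^{|\fs|-\gamma+r_e}}\;,
\]
using $|x|/|y|<\tfrac12$ and $|k|\ge r_e$ in the last step. This gives the first bound, and I expect this balancing of the vanishing factor $|x|^{|k|}$ against the ratio $|x|/|y|$ to be the only genuinely delicate point of the proof.

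For the remaining two regimes I would simply estimate $K(x,y)$ term by term, via $|K_0(x-y)|\lesssim|x-y|^{-(|\fs|-\gamma)}$ and $|x^jD^jK_0(-y)|\lesssim|x|^{|j|}|y|^{-(|\fs|-\gamma)-|j|}$ for the terms with $|j|<r_e$ (so $|j|\le r_e-1$, as $|j|$ and $r_e$ are integers). On $\{|x|/2<|y|\le2|x|\}$ each subtracted term is $\sim|y|^{-(|\fs|-\gamma)}\lesssim|x-y|^{-(|\fs|-\gamma)}$, because $|x-y|\lesssim|y|$ and $|\fs|-\gamma>0$ (the latter from $\gamma\le|\fs|/2$), which combined with the leading term gives the middle bound. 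On $\{|y|\le|x|/2\}$, using $|x-y|\sim|x|$, the leading term is
\[
|x-y|^{-(|\fs|-\gamma)}\ \sim\ |x|^{-(|\fs|-\gamma)}\ =\ \frac{|x|^{r_e-1}}{|y|^{|\fs|-\gamma+r_e-1}}\,\Big(\frac{|y|}{|x|}\Big)^{|\fs|-\gamma+r_e-1}\ \lesssim\ \frac{|x|^{r_e-1}}{|y|^{|\fs|-\gamma+r_e-1}}
\]
since $|y|/|x|\le\tfrac12$ and $|\fs|-\gamma+r_e-1>0$ (where $r_e\ge1$ enters), while each subtracted term equals $\tfrac{|x|^{r_e-1}}{|y|^{|\fs|-\gamma+r_e-1}}(|x|/|y|)^{|j|-(r_e-1)}$, which is $\lesssim\tfrac{|x|^{r_e-1}}{|y|^{|\fs|-\gamma+r_e-1}}$ because $|x|/|y|\ge2$ and $|j|-(r_e-1)\le0$. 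This gives the third bound and completes the argument.
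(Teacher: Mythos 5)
Your proposal is correct, and it is precisely the argument that the paper compresses into the one-line remark that the bounds ``follow immediately from the expression of $K$ and the behaviour of $K_0$ and its derivatives'': the trivial case $r_e=0$ and the regimes $|y|\le2|x|$ go term by term using \eqref{e:kernel_assumption}, while the regime $|y|>2|x|$ uses the $\fs$-scaled Taylor remainder formula to convert the polynomial subtraction into the gain $|x|^{r_e}$. The only point worth flagging is that your treatment of regime 3 uses $|j|\le r_e-1$, which relies on the scaled degree $|j|$ being an integer; this holds for the parabolic scalings $(2,1)$ and $(2,1,1,1)$ used in the paper's applications, but you should note the assumption if you want the statement in full generality.
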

\begin{proof}
    Choose a smooth curve $\ell:[0,1]\mapsto\R^d$ connecting $-y$ and $x-y$ such that $\ell(0)=-y$, $\ell(1)=x-y$, it has length $L\lesssim |x|$, and $|\ell(s)|\in[|y|\wedge|x-y|,|y|\vee|x-y|]$ for every $s\in[0,1]$. The expression of $K$ implies that
    \begin{equation*}
        K(x,y) = \int\cdots\int_{0<s_{r_e}<\dots<s_1<1} D^{r_e}K_0\big(\ell(s_{r_e})\big) d\ell(s_{r_e})\cdots d\ell(s_1).   
    \end{equation*}
    Therefore, the desired bounds follow from \eqref{e:kernel_assumption}.
\end{proof}

\subsection{Some pointwise correlation bounds} \label{sec:correlation_bounds}

We want to improve the bound in \cite[Theorem~6.4]{HX19} to be independent of frequencies. \cite{Xu18} deals with the situation where there is only one frequency, while this article generalises to the case of two frequencies $\Btheta = (\theta_{\fx}, \theta_{\fy})$. The following three lemmas on pointwise correlation bounds will be used later on. We leave their proofs in Appendix~\ref{sec:singleton_proof}.

\begin{lem} 	\label{le:comparable_bound}
    We have the bound
    \begin{equ}	\label{e:comparable_bound}
		\bigg|\E	\prod_{i=1}^{2n} \partial_{\Btheta}^{\boldsymbol{r}}\fF(\Btheta,x_i,y_i)\bigg| \lesssim_n \E \prod_{i=1}^{2n} \bigg[ \Big( \sum_{k=m_1}^{ (m_1\vee m_2)+1} X_i^{\diamond k} \Big) \Big( \sum_{k=m_2}^{(m_1\vee m_2)+1}Y_i^{\diamond k} \Big) \bigg], 
	\end{equ}
	uniformly over all $\theta_{\fx}, \theta_{\fy} \in \R$ with $\frac{1}{100n(1+\Lambda^2)} |\theta_{\fy}| \leq |\theta_{\fx}| \leq 100n(1+\Lambda^2) |\theta_{\fy}|$, and all point configurations $\vec{x}$ and $\vec{y}$. 
\end{lem}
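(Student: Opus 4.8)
The plan is to recast the left-hand side into the same diagrammatic shape as the right-hand side and to control it while keeping the estimate uniform in $\Btheta$; the comparable-frequency hypothesis will enter precisely to balance the two frequencies against one another. First observe that the right-hand side is the natural envelope: expanding each $(\sum_k X_i^{\diamond k})(\sum_l Y_i^{\diamond l})$ into Wick monomials $X_i^{\diamond a}Y_i^{\diamond b}$ with $a\in\{m_1,\dots,M+1\}$, $b\in\{m_2,\dots,M+1\}$ (where $M:=m_1\vee m_2$; all coefficients being positive powers of covariances) and applying the diagram formula, the right-hand side becomes a sum of products of the covariances $\E[\Psi_\eps(\cdot)\Psi_\eps(\cdot)]$, which by Assumption~\ref{as:cor} are \emph{strictly positive} and bounded by $\Lambda$; hence it is a sum of non-negative contributions and already carries exactly the point-configuration decay (powers of the small cross-covariances) that the left-hand side can exhibit --- e.g.\ it vanishes as all $2n$ pairs $x_i,y_i$ drift mutually apart, as does the left-hand side.

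The essential point is to \emph{not} chaos-expand $\trig_{\zeta_j}(\theta X)$ and estimate term by term, which --- as recalled in the paper (cf.\ \cite[Rmk.~6.21]{HX19}) --- destroys the cancellation built into the trigonometric functions and leaves an inverse-Gaussian factor $e^{c_n(1+|\Btheta|)^2}$. I would instead linearise, writing $\trig_\zeta(\theta X)$ through $e^{\pm i\theta X}$ and using a Taylor-with-integral-remainder form of the Hermite/Wick generating identity for the chaos truncation: from $X^{\diamond k}=\sigma_X^k H_k(X/\sigma_X)$ and $\sum_k\tfrac{t^k}{k!}H_k(u)=e^{tu-t^2/2}$, expanding in $t$ to order $m-1$ and setting $t=i\theta\sigma_X$ gives
\begin{equ}
  \tT_{(m-1)}\big(e^{i\theta X}\big)=\frac{(i\theta)^m\sigma_X^m}{(m-1)!}\int_0^1(1-s)^{m-1}\,H_m\!\Big(\frac{X}{\sigma_X}-is\theta\sigma_X\Big)\,e^{is\theta X}\,e^{-\frac12(1-s^2)\theta^2\sigma_X^2}\,ds,
\end{equ}
and similarly for $\partial_{\Btheta}^{\r}$ applied to $\tT_{(m_1-1)}(\trig_{\zeta_1}(\theta_\fx X))$ and $\tT_{(m_2-1)}(\trig_{\zeta_2}(\theta_\fy Y))$, with $\partial_{\Btheta}^{\r}$ hitting only the scalar factors; here Assumption~\ref{as:cor} at coinciding points gives $\sigma_{X_i}^2,\sigma_{Y_i}^2\in[\Lambda^{-1},\Lambda]$. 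This representation makes the frequency-balance explicit: a prefactor $\theta^m$ (plus up to $\theta^m$ more from the shifted degree-$m$ Hermite polynomial) is matched against a Gaussian-in-$\theta$ damping whose $s$-integral produces $\asymp(1+\theta^2\sigma_X^2)^{-m}$, and simultaneously it isolates the $\le m$ "special" legs $X^{\diamond k}$ coming from $H_m$, which on contraction must reach other hosts and so force $m$ powers of a cross-covariance at each host --- this is what regenerates the point-decay of the envelope.

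Substituting this into $\E\prod_i\partial_\Btheta^\r\fF(\Btheta,x_i,y_i)$ for all $4n$ factors yields a bounded integral over auxiliary parameters in $[0,1]^{4n}$ of: the $\theta$-prefactors, the product of the dampings, and a Gaussian expectation $\E[\prod_i(\text{degree-}m_1\text{ poly in }X_i)(\text{degree-}m_2\text{ poly in }Y_i)\,e^{iL}]$ with $L$ real linear in the $X_i,Y_i$; the last is evaluated by $\E[e^{iL}\cdot\mathrm{poly}]=e^{-\frac12\mathrm{Var}(L)}\cdot(\mathrm{poly}$ with each variable shifted by $i\,\mathrm{Cov}(L,\cdot))$, a finite sum of products of $O_n(1)$ covariances and cross-covariances (each in $(0,\Lambda]$) times $e^{-\frac12\mathrm{Var}(L)}\le1$. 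One then performs the auxiliary integrals: the dampings absorb all genuine powers of $\theta_\fx^2,\theta_\fy^2$, leaving at worst a fixed polynomial in $|\theta_\fx|,|\theta_\fy|$ from crude bounds on the Hermite coefficients, which is killed by a residual sliver of Gaussian decay together with $\tfrac{1}{100n(1+\Lambda^2)}|\theta_\fy|\le|\theta_\fx|\le 100n(1+\Lambda^2)|\theta_\fy|$ (so that whatever decay survives in the larger frequency controls the smaller one); and one organises the surviving covariance products so they recombine into the Wick-monomial correlations constituting the right-hand side, the enlargement of the intervals to $[m_j,M+1]$ being the parity buffer needed to match residual Wick monomials against ordinary products of Wick powers.

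The step I expect to be the main obstacle is bookkeeping the \emph{cancellations} through all of this. Neither the trigonometric cancellation (which forces one to work with $e^{\pm i\theta X}$ rather than $\cos,\sin$ directly) nor the cancellation intrinsic to the chaos truncation (the pieces of the integral representation that enforce $\E[\tT_{(m-1)}(\,\cdot\,)]=0$ and, more, make the far-apart correlations vanish) can be discarded by a triangle inequality: individual pieces of the expansion are only $O_n(1)$ and do \emph{not} decay as the points separate, so they must be grouped to expose the true decay. Doing so while at the same time (i) keeping the cancellation of $\theta^{2m}$ against $(\theta^2)^{-m}$ quantitatively exact, so that no $e^{c\theta^2}$ remains, and (ii) ending with the sharp envelope $\E\prod_i[(\sum_k X_i^{\diamond k})(\sum_l Y_i^{\diamond l})]$ rather than the useless bound obtained by replacing every covariance by $\Lambda$, is exactly where the argument is delicate and where the comparable-frequency hypothesis is genuinely needed. (The non-comparable regimes $|\theta_\fx|\gg|\theta_\fy|$ and $|\theta_\fy|\gg|\theta_\fx|$ fall outside this lemma and instead require the localisation device of Proposition~\ref{pr:main_idea}.)
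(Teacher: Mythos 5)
Your proposal is a genuinely different route from the paper's. The paper (Appendix~\ref{sec:singleton_proof}, following \cite{Xu18}) clusters the $4n$ points at scale $L_0\eps$, chaos-expands \emph{jointly over each cluster}, extracts a Gaussian factor $e^{-\theta_s^2/(2\Lambda)}$ from the coefficient of any singleton cluster (since the truncation forces the chaos index there to start at $t_s$), and beats the resulting $e^{C(1+|\Btheta|)^2/L_0^\alpha}$ series growth by taking $L_0=L_0(n,\Lambda,\r)$ large; the no-singleton configurations are handled directly by Proposition~\ref{pr:no_singleton}. The comparability hypothesis enters simply to ensure that whichever of $\theta_\fx,\theta_\fy$ appears in the singleton decay also controls the other. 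Your route instead uses an exact integral-remainder representation of $\tT_{(m-1)}(e^{i\theta X})$ and a Cameron--Martin computation; your formula for the remainder is correct.

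However, your write-up does not constitute a proof: it identifies, but does not close, two genuine steps. First, the frequency balance: after the shift $Z_j\mapsto Z_j+i\,\mathrm{Cov}(L,Z_j)$, the shifted argument of $H_{m_j}$ is $\frac{Z_j}{\sigma_j}+ia_j$ with $a_j=\sum_{k\neq j}s_k\theta_k\E[Z_jZ_k]/\sigma_j$; the bound $|a_j|\lesssim|\Btheta|$ together with the prefactor $\theta_j^{m_j}$ gives polynomial growth $|\Btheta|^{2n(m_1+m_2)}\theta_\fx^{\,2nm_1}\theta_\fy^{\,2nm_2}$ before integration, and the $s$-dampings only supply $\theta_\fx^{-4nm_1}\theta_\fy^{-4nm_2}$ --- these cancel \emph{only} under the comparability hypothesis, a computation you do not carry out but that does work. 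Second, and more seriously, matching the resulting polynomial in covariances to the specific envelope on the right-hand side is not done. It is worth pointing out that your worry that ``individual pieces do not decay as the points separate'' is actually resolvable by the shift you already invoke: after the Cameron--Martin shift, the $j$-th argument's shift amount $a_j$ is itself a cross-covariance, so each term in the Appell expansion $H_{m_j}(\frac{Z_j}{\sigma_j}+ia_j)=\sum_k\binom{m_j}{k}(ia_j)^{m_j-k}\sigma_j^{-k}Z_j^{\diamond k}$ carries exactly $m_j$ covariance legs at host $j$, reproducing the envelope's decay rate termwise. But verifying that this sum is dominated by $\E\prod_i[(\sum_k X_i^{\diamond k})(\sum_l Y_i^{\diamond l})]$ --- rather than some other, incomparable positive polynomial in covariances --- is precisely the kind of combinatorial reorganisation the paper avoids: by chaos-expanding over clusters, the singleton Wick exponents already lie in $[t_s,\infty)$ by construction and Lemma~\ref{le:reduction_enhancement} reduces them into the window $[t_j,(m_1\vee m_2)+1]$, so no reorganisation is needed. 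As it stands, your proposal is an outline with an honestly flagged but unclosed gap.
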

\begin{rmk}
    Note that Assumption~\ref{as:cor} implies that $\Psi_{\eps}$ is positively correlated. Therefore, the right hand side is positive by Wick's formula.
\end{rmk}

With this lemma, we can proceed as \cite[Section~3]{Xu18} to get Theorem~\ref{thm:main_result} in the case $\theta_{\fx}\sim\theta_{\fy}$. In the remainder of this section, we mainly focus on the case of $|\frac{\theta_{\fx}}{\theta_{\fy}}|$ being very large or very small.

\begin{lem} \label{le:singleton_bound}
	The bound \eqref{e:comparable_bound} holds uniformly over all $\theta_{\fx},\theta_{\fy}\in\R$ with $|\theta_{\fx}| > 100n(1+\Lambda^2) |\theta_{\fy}|$, and all point configurations $\vec{x}$ and $\vec{y}$ with the constraint $\vec{x} \in \sS_{2n}$.
	
	The same bound also holds uniformly over the range $|\theta_{\fy}| > 100 n (1 +\Lambda^2) |\theta_{\fx}|$ and the point configurations $\vec{x}$, $\vec{y}$ with the constraint $\vec{y} \in \sS_{2n}$. 
\end{lem}

% For a point set $\mathfrak{P}=\{x_1,\cdots,x_m\}$, we put $x_i,x_j$ into one cluster if $|x_i-x_j|\leq L\eps$. Then $\mathfrak{P}$ is split into such $L$-clusters of $\mathfrak{P}$. If a cluster has only one point, then we call this point a singleton. So the lemma can be restated as if there is a singleton $x_i$ of the point set $\{x_1,\cdots,x_{2n}\}$, then we have the inequality \eqref{e:singleton_bound}. There are still circumstances where no singleton exists, and to deal with such cases, we would need the following lemma, which considers the extreme case where $x_1=\cdots = x_{2n} = x$. In such case, $x$ acts just like a singleton since $\theta_{\fx}$ is much larger than $\theta_{\fy}$, thus we can expect the following lemma hold.

If $|\theta_\fx| \gg |\theta_\fy|$, the case for the point configuration such that $\vec{x} \in \sS_{2n}^{c}$ and $\vec{y} \in \sS_{2n}$ is not covered by the above two lemmas (the configuration where both $\vec{x}$ and $\vec{y}$ are in $\sS_{2n}^c$ can be covered by other methods). In this situation, it turns out that we only need to deal with the extreme case where $x_1 = \cdots = x_{2n} = x$. The same is true for the point configuration $\vec{y} \in \sS_{2n}^c$ if $|\theta_\fy| \gg |\theta_\fx|$. These are covered by the following lemma.

\begin{lem} \label{le:fixed_bound}
If $|\theta_{\fx}| > 100n(1+\Lambda^2) |\theta_{\fy}|$, then we have the bound
	\begin{equation} \label{e:fixed_bound_alpha}
		\bigg|\E \prod_{i=1}^{2n} \partial_{\Btheta}^{\boldsymbol{r}}\fF(\Btheta,x,y_i)\bigg| \lesssim_n \eps^{- \alpha ((m_1\vee m_2)+1 )} \E   \prod_{i=1}^{2n} \Big( \sum_{k=m_2}^{(m_1\vee m_2)+1}Y_i^{\diamond k}\Big).
	\end{equation}
The proportionality constant is uniform over $\theta_{\fx}, \theta_{\fy}$ with $|\theta_{\fx}| > 100n(1+\Lambda^2) |\theta_{\fy}|$, and is also uniform over all point configurations $x$ and $\vec{y} = (y_1, \dots, y_{2n})$. 

Similarly, by swapping the roles of $x$ and $y$, the same bound is true uniformly over the parameters $|\theta_{\fy}| > 100n(1+\Lambda^2) |\theta_{\fx}|$ and all point configurations $\vec{x} = (x_1, \dots, x_{2n})$ and $y$. 
\end{lem}

\subsection{Proof of Theorem~\ref{thm:main_result}}

The main idea of the proof lies in the following proposition. The key part in the statement below is that in the integration over the small domain $\sS_{2n}^{c}$, one still gains extra powers of $\eps$ from $Y^{\diamond m_2}$ (and $X^{\diamond m_2}$), as explained at the beginning of Section~\ref{sec:proof}. 

% \textcolor{blue}{The proposition is actually an intermediate step. We first bound the $2n$ moment with two second moment terms, and the rest of the section are devoted to bound each term to match the right hand side of \eqref{eq:main_result}. Moreover, the proposition didn't use any property of the kernel $K$ so we can state it in a more general version.}

\begin{prop} \label{pr:main_idea}
    Let $A: \R^d \times \R^d \rightarrow \R$ be absolutely integrable and supported in $\{(x,y):|x|,|y|\leq 2\}$. If $|\theta_{\fx}| > 100 n (1 + \Lambda^2) |\theta_{\fy}|$, then
    \begin{equation} \label{e:main_idea_1}
    \begin{split}
       \bigg\| \iint A(x,y) \d_{\Btheta}^{\r} &\fF(\Btheta,x,y) dxdy \bigg\|_{2n} \lesssim \bigg\| \sum_{k_1, k_2}  \iint |A(x,y)| X^{\diamond k_1}  Y^{\diamond k_2} dxdy \bigg\|_2\\
       &+ \eps^{-\frac{\alpha(m_1+m_2)}{2n}} \bigg( \int_{\sS_{2n}^{c}}\prod_{i=1}^{2n} \Big\| \int |A(x_i,y)| \cdot  Y^{\diamond m_2} dy \Big\|_{2} d\vec{x} \bigg)^{\frac{1}{2n}}.
    \end{split}
    \end{equation}
    Similarly, if $|\theta_{\fy}| > 100n(1+\Lambda^2) |\theta_{\fx}|$, then
    \begin{equation} \label{e:main_idea_2}
    \begin{split}
       \bigg\| \iint A(x,y) \d_{\Btheta}^{\r} &\fF(\Btheta,x,y) dxdy \bigg\|_{2n} \lesssim \bigg\| \sum_{k_1, k_2}  \iint |A(x,y)| X^{\diamond k_1} Y^{\diamond k_2} dxdy \bigg\|_2\\
       &+ \eps^{-\frac{\alpha(m_1+m_2)}{2n}} \bigg( \int_{\sS_{2n}^{c}} \prod_{i=1}^{2n} \bigg\| \int |A(x,y_i)| \cdot X^{\diamond m_2} dx \bigg\|_{2} d\vec{y} \bigg)^{\frac{1}{2n}}.
    \end{split}
    \end{equation}
    Both sums are over $m_1 \leq k_1 \leq (m_1 \vee m_2) + 1$ and $m_2 \leq k_2 \leq (m_1 \vee m_2) + 1$. The proportionality constants depend on $n$ and $\Lambda$ only.
\end{prop}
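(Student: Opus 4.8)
My plan is to expand the $2n$-th moment as a $4n$-fold integral, split the $\vec{x}$-variable according to whether it lies in $\sS_{2n}$ or in $\sS_{2n}^{c}$, and identify the two resulting pieces with the two terms on the right of~\eqref{e:main_idea_1}. It suffices to treat the first case $|\theta_{\fx}|>100n(1+\Lambda^2)|\theta_{\fy}|$, the second being identical after interchanging $x$ and $y$ and using the second halves of Lemmas~\ref{le:singleton_bound} and~\ref{le:fixed_bound}. Writing (with $A$ supported in $\{|x|,|y|\le2\}$)
\begin{equation*}
    \Big\| \iint A(x,y)\,\d_{\Btheta}^{\r}\fF(\Btheta,x,y)\,dxdy \Big\|_{2n}^{2n} = \int\!\!\int \prod_{i=1}^{2n}A(x_i,y_i)\;\E\prod_{i=1}^{2n}\d_{\Btheta}^{\r}\fF(\Btheta,x_i,y_i)\,d\vec{x}\,d\vec{y} =: I_1+I_2,
\end{equation*}
where $I_1$ is the contribution of $\vec{x}\in\sS_{2n}$ and $I_2$ that of $\vec{x}\in\sS_{2n}^{c}$, the bound~\eqref{e:main_idea_1} will follow from $\|\cdots\|_{2n}\le|I_1|^{1/2n}+|I_2|^{1/2n}$ once each $|I_j|^{1/2n}$ is controlled.

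For $I_1$ I would apply Lemma~\ref{le:singleton_bound} to bound $|\E\prod_i\d_{\Btheta}^{\r}\fF|$ by $\E\prod_i\big[(\sum_{m_1\le k_1\le M}X_i^{\diamond k_1})(\sum_{m_2\le k_2\le M}Y_i^{\diamond k_2})\big]$ with $M:=(m_1\vee m_2)+1$. By Wick's theorem and the \emph{positivity} of the covariance in Assumption~\ref{as:cor}, this expectation is a sum of products of nonnegative covariances, hence nonnegative; so I may further enlarge the $\vec{x}$-domain from $\sS_{2n}$ back to the whole space (which only increases the integral) and replace $A$ by $|A|$, and Fubini then rewrites the result as $\big\|\sum_{k_1,k_2}\iint|A|\,X^{\diamond k_1}Y^{\diamond k_2}\,dxdy\big\|_{2n}^{2n}$. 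Since this integrand lies in the Wiener chaoses of order $\le2M$, Gaussian hypercontractivity turns $\|\cdot\|_{2n}$ into $\|\cdot\|_2$ at the cost of an $n$-dependent constant, yielding the first term of~\eqref{e:main_idea_1}.

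For $I_2$, the key observation is that by~\eqref{e:partition_cluster} every $\vec{x}\in\sS_{2n}^{c}$ has its coordinates organised into clusters of diameter $O(\eps)$; consequently each rescaled covariance $\E(X_iX_j)$, and each $\E(X_iY_j)$, differs from its value at the collapsed configuration $x_1=\cdots=x_{2n}$ by only a bounded multiplicative factor (directly from the two-sided bound in Assumption~\ref{as:cor}). Hence the estimate underlying Lemma~\ref{le:fixed_bound} still applies and gives, for every fixed $\vec{x}\in\sS_{2n}^{c}$,
\begin{equation*}
    \Big|\E\prod_{i=1}^{2n}\d_{\Btheta}^{\r}\fF(\Btheta,x_i,y_i)\Big| \lesssim_{n,\Lambda} \eps^{-\alpha(m_1+m_2)}\,\E\prod_{i=1}^{2n}\Big(\sum_{k=m_2}^{M}Y_i^{\diamond k}\Big),
\end{equation*}
using $\eps<1$ and $M\le m_1+m_2$ to absorb the exponent of Lemma~\ref{le:fixed_bound}. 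Multiplying by $\prod_i|A(x_i,y_i)|$, integrating out $\vec{y}$, and again invoking positivity and Fubini, $|I_2|\lesssim\eps^{-\alpha(m_1+m_2)}\int_{\sS_{2n}^{c}}\E\prod_i g_{x_i}\,d\vec{x}$ with $g_{x_i}:=\int|A(x_i,y)|\sum_{k=m_2}^{M}Y^{\diamond k}\,dy$. Hölder's inequality and hypercontractivity give $\E\prod_i g_{x_i}\le\prod_i\|g_{x_i}\|_{2n}\lesssim_n\prod_i\|g_{x_i}\|_2$, while the pointwise bound $0<\E(YY')=\eps^{\alpha}\E(\Psi_\eps(y)\Psi_\eps(y'))\le\Lambda$ (again Assumption~\ref{as:cor}) implies $(\E(YY'))^{k}\le\Lambda^{k-m_2}(\E(YY'))^{m_2}$ for $k\ge m_2$, so that $\|g_{x_i}\|_2\lesssim_{M,\Lambda}\big\|\int|A(x_i,y)|Y^{\diamond m_2}\,dy\big\|_2$. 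Taking $2n$-th roots produces the second term of~\eqref{e:main_idea_1}, and the case $\theta_{\fy}^2>100n^2(1+\Lambda^2)\theta_{\fx}^2$ is symmetric.

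The heart of the argument — and the step I expect to require the most care — is the treatment of $I_2$: one must retain the \emph{entire} integral over $\sS_{2n}^{c}$ (its small-volume factor~\eqref{e:no_singleton_volume} is what makes the estimate useful when fed back into the proof of Theorem~\ref{thm:main_result}) while simultaneously harvesting the positive power $\eps^{\alpha m_2/2}$ concealed in each $Y^{\diamond m_2}=\eps^{\alpha m_2/2}\Psi_\eps^{\diamond m_2}(y)$. Realising both at once is exactly what forces the passage from a generic clustered $\vec{x}$ to the degenerate configuration of Lemma~\ref{le:fixed_bound}, and it is there that the $n$-independent loss $\eps^{-\alpha(m_1+m_2)}$ is unavoidable; a cruder approach, such as Cauchy--Schwarz splitting the $X$-part from the $Y$-part, would reinstate an uncontrolled factor of order $e^{c\theta_{\fy}^2}$, which is precisely the defect that Lemma~\ref{le:fixed_bound} is designed to avoid.
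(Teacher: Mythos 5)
Your treatment of $I_1$ is sound and essentially matches the paper's treatment of the part $\mathfrak{S}_1$: apply Lemma~\ref{le:singleton_bound}, use positivity of covariances to enlarge the $\vec{x}$-domain, and finish with Fubini and hypercontractivity.

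The treatment of $I_2$, however, contains a genuine gap. The claim that for $\vec{x}\in\sS_{2n}^{c}$ ``each rescaled covariance $\E(X_iX_j)$, and each $\E(X_iY_j)$, differs from its value at the collapsed configuration $x_1=\cdots=x_{2n}$ by only a bounded multiplicative factor'' is false. Membership in $\sS_{2n}^{c}$ only forces each $x_i$ to lie within $L\eps$ of \emph{some} other $x_j$; by \eqref{e:partition_cluster} the points then organise into clusters of size at least two, each of diameter $O(n L\eps)$, but distinct clusters may be a distance of order one apart. For $i$ and $j$ in different clusters, $\E(X_iX_j)\approx\eps^{\alpha}/|x_i-x_j|^{\alpha}\approx\eps^{\alpha}$, whereas at the collapsed configuration $\E(X_iX_j)=\E X^2\approx 1$; the ratio is of size $\eps^{\alpha}\to 0$ and is not bounded below. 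Consequently, the assertion that ``the estimate underlying Lemma~\ref{le:fixed_bound} still applies'' for all $\vec{x}\in\sS_{2n}^{c}$ is unjustified. Lemma~\ref{le:fixed_bound} is stated and proved only for the fully collapsed configuration, and its proof in the appendix uses this in an essential way: the $2n$-fold product
$\big(\d_{\theta_{\fx}}^{r_1}\tT_{(m_1-1)}(\trig_{\zeta_1}(\theta_{\fx}X))\big)^{2n}$
is rewritten as a \emph{trigonometric polynomial of the single Gaussian $X$} with frequencies $p\theta_{\fx}$, $p\in\{-2n,\dots,2n\}$, and the factor $e^{-c\,p^2\theta_{\fx}^2}$ for $p\neq0$ supplies the decay. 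With distinct $x_i$ in several clusters, a product-to-sum expansion instead produces terms $\trig_{\pm}\big(\theta_{\fx}\sum_i\varepsilon_iX_i\big)$ whose variance depends on the covariance matrix of the $X_i$'s and offers no clean single-variable reduction.

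The paper circumvents this difficulty by a different ordering of the steps. It tiles $\{|x|\le2\}$ into cubes of side $L\eps$, grouped so that cubes within a group are mutually separated by $L\eps$, writes $\|\cdot\|_{2n}^{2n}$ as a sum over maps $\sigma$ of $\E\prod_{k}Z_{Q_{\sigma(k)}}$, and for the no-singleton maps $\sigma\in\mathfrak{S}_2$ applies H\"older's inequality
$\E\prod_k Z_{Q_{\sigma(k)}}\le\prod_k\|Z_{Q_{\sigma(k)}}\|_{2n}$
followed by Minkowski's integral inequality to move the $\|\cdot\|_{2n}$ inside the $x$-integral over the small cube. This leaves $2n$ factors of the form $\|\int A(x_i,y)\,\d_{\Btheta}^{\r}\fF(\Btheta,x_i,y)\,dy\|_{2n}$, each with a single fixed $x_i$; only at that stage is Lemma~\ref{le:fixed_bound} invoked, factor by factor. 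Your proposal directly splits the $\vec{x}$-integral into $\sS_{2n}$ and $\sS_{2n}^{c}$ and then tries to apply a pointwise bound to the joint $2n$-point correlation; this skips the H\"older/Minkowski decoupling, which is precisely what makes the passage to the single-point Lemma~\ref{le:fixed_bound} legitimate. To repair the argument along your lines one would have to prove a genuinely new, multi-cluster version of Lemma~\ref{le:fixed_bound}, which your comparability claim does not provide.
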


\begin{rmk} 
\label{rmk:Kernel_Asymmetric}
    We will later use this proposition with $A(x,y)$ being one of the following:
    \begin{equation*}
        K(x,y) \varphi^{\lambda}(x)\;, \qquad  K(x,y) \varphi^{\lambda}(x) \boldsymbol{1}_{|y| > 2 \lambda}\;, \qquad  K(x,y) \varphi^{\lambda}(x) \boldsymbol{1}_{|y| \leq 2 \lambda}\;,
    \end{equation*}
    But no specific assumptions on $A$ are made in the statement of the proposition, so the two bounds \eqref{e:main_idea_1} and \eqref{e:main_idea_2} are identical by swapping the roles of $x$ and $y$.  But we still state both of them for the following reason. Proposition~\ref{pr:main_idea} is an intermediate step to proving \eqref{eq:main_result}, and hence later we need to control the terms on the right hand sides of \eqref{e:main_idea_1} and \eqref{e:main_idea_2} by that of \eqref{eq:main_result}. It turns out that controlling their second terms (by the right hand side of \eqref{eq:main_result}) will need different arguments due to the kernel $K$ not being symmetric in $x$ and $y$. One can compare Propositions~\ref{pr:nelsontype_bound_x} and~\ref{pr:nelsontype_bound_y} below. In particular, we will consider $\{|y_i| \leq 2 \lambda\}$ and $\{|y_i| > 2\lambda\}$ separately when controlling the second term on the right hand side of \eqref{e:main_idea_2}, while we do not distinguish the location of $x_i$ for the second term in \eqref{e:main_idea_1}. 
    
    % \bl{then the left hand side of the proposition matches with the left hand side of \eqref{eq:main_result}. The next step should be bounding the two terms of the right hand (with $A(x,y)$ being one of the above) to match the right hand side of \eqref{eq:main_result}. Notice that $K(x,y)$ is not symmetric in its two variables, we cannot deal with the right hand side of \eqref{e:main_idea_1} and \eqref{e:main_idea_2} by simply swapping $x$ with $y$. The reader may get some idea of this by comparing Proposition~\ref{pr:nelsontype_bound_x} and Proposition~\ref{pr:nelsontype_bound_y}. This is also the technical reason that we need to take $A(x,y) = K(x,y)\varphi^{\lambda}(x) \boldsymbol{1}_{|y| > 2 \lambda}$ instead of simply taking $A(x,y) = K(x,y)\varphi^{\lambda}(x)$. Actually when $|\theta_{\fy}| \gg |\theta_{\fx}|$, we break the left hand side of \eqref{eq:main_result} into two parts according to the location of $y$ and use \eqref{e:main_idea_2} on each part.}
\end{rmk}

\begin{proof}[Proof of Proposition~\ref{pr:main_idea}]
    The two bounds are identical up to a change of notation, so we prove \eqref{e:main_idea_1} only. 

    Divide the domain $\{|x| \leq 2\}$ into disjoint cubes of side length $L \eps$, and the total number of sub-cubes is bounded by a constant multiple of $\eps^{-|\fs|}$. We can further partition these cubes into at most $2^{d}$ groups denoted by $\Gamma_1, \Gamma_2, \dots$ such that for each $\Gamma_j$, the distance of any two different cubes in $\Gamma_j$ is at least $L \eps$. We have
    \begin{equation*}
        \bigg\| \iint A(x,y) \d_{\Btheta}^{\r} \fF(\Btheta,x,y) dxdy \bigg\|_{2n} \leq \sum_{j} \bigg\| \iint\limits_{x \in \Gamma_j} A(x,y) \d_{\Btheta}^{\r} \fF(\Btheta,x,y) dxdy \bigg\|_{2n}\;,
    \end{equation*}
    where the sum is taken over at most $2^{d}$ terms. Hence, it suffices to prove the bound \eqref{e:main_idea_1} for each $j$ on the right hand side above. We therefore fix any group of cubes which we denote by $\Gamma$ (with an abuse of notation), and use $Q_1, \dots, Q_N$ to denote the cubes in $\Gamma$ with $N \lesssim \eps^{-|\fs|}$. For any cube $Q$, write
    \begin{equation*}
        Z_Q := \iint\limits_{x \in Q} A(x,y) \d_{\Btheta}^{\r} \fF(\Btheta,x,y) dxdy\;.
    \end{equation*}
    Then we have
    \begin{equation*}
        \bigg\| \iint\limits_{x\in \Gamma} A(x,y) \d_{\Btheta}^{\r} \fF(\Btheta,x,y) dxdy \bigg\|_{2n}^{2n} = \sum_{\sigma} \E \big( Z_{Q_{\sigma(1)}} \cdots Z_{Q_{\sigma(2n)}} \big)\;,
    \end{equation*}
    where the sum is taken over all maps $\sigma: \{1, \dots, 2n\} \rightarrow \{1, \dots, N\}$. For any such map $\sigma$, we use the shorthand notation
    \begin{equation*}
        \qQ_{\sigma} := Q_{\sigma(1)} \times \cdots Q_{\sigma(2n)}\;. 
    \end{equation*}
    We now split the sum of $\sigma$ into two disjoint parts $\sigma \in \mathfrak{S}_1 \cup \mathfrak{S}_2$, where
    \begin{equation*}
        \mathfrak{S}_1:= \Big\{ \sigma: \exists k  \text{ such that} \; \sigma(k') \not = \sigma(k) \; \text{for every}\; k' \neq k \Big\}\;, 
    \end{equation*}
    and
    \begin{equation*}
        \mathfrak{S}_2:= \Big\{ \sigma: \text{for every} \;k,\;\exists k'\neq k \text{ such that } \sigma(k') = \sigma(k) \Big\}\;.
    \end{equation*}
    Note that since any two cubes in $\Gamma$ are at least $L \eps$ away from each other, $\sigma \in \mathfrak{S}_1$ implies $\qQ_{\sigma} \subset \sS_{2n}$, and $\sigma \in \mathfrak{S}_2$ implies $\qQ_{\sigma} \subset \sS_{2n}^{c}$. For the sum over $\mathfrak{S}_1$, we have
    \begin{equation*}
        \sum_{\sigma \in \mathfrak{S}_1} \E \prod_{k=1}^{2n} Z_{Q_{\sigma(k)}} =  \sum_{\sigma \in \mathfrak{S}_1} \; \idotsint\limits_{\vec{x} \in \qQ_{\sigma}} \prod_{i=1}^{2n} A(x_i, y_i) \cdot \E \bigg( \prod_{i=1}^{2n} \d_{\Btheta}^{\r} \fF(\Btheta, x_i, y_i) \bigg) d \vec{y} d \vec{x}\;.
    \end{equation*}
    Since $\qQ_{\sigma} \subset \sS_{2n}$ for $\sigma \in \mathfrak{S}_1$, by Lemma~\ref{le:singleton_bound}, we have
    \begin{equation*}
    \begin{split}
        \Big| \sum_{\sigma \in \mathfrak{S}_1} \E \prod_{k=1}^{2n} Z_{Q_{\sigma(k)}} \Big| &\lesssim \idotsint \prod_{i=1}^{2n} |A(x_i, y_i)| \; \E \prod_{i=1}^{2n} \Big(\sum_{k_1, k_2} X_i^{\diamond k_1} Y_i^{\diamond k_2}\Big) d \vec{x} d \vec{y}\\
        &= \bigg\| \iint |A(x,y)| \cdot \sum_{k_1, k_2} X^{\diamond k_1} Y^{\diamond k_2} dx dy \bigg\|_{2n}^{2n}\;,
    \end{split}
    \end{equation*}
    where in the first inequality we have enlarged the domain of integration to all $\vec{x}$ and $\vec{y}$ so that the term on the right hand side is exactly the $2n$-th moment of an integral. The sum is taken over $k_1$, $k_2$ in the range of the statement. Note that the integral
    \begin{equation*}
        \mathfrak{A}:= \iint |A(x,y)| \cdot \sum_{k_1, k_2} X^{\diamond k_1} Y^{\diamond k_2} dx dy
    \end{equation*}
    lives in the first $\big(2(m_1\vee m_2)+2\big)$-th Wiener chaos space. Then by hypercontractivity estimates (see \cite[Section~1.4.3]{Nua06}), we have
    \begin{equation*}
    \begin{split}
        \| \mathfrak{A} \|_{2n}^{2n} &\leq \sum_{\ell=0}^{2(m_1\vee m_2)+2} \| J_\ell \mathfrak{A} \|_{2n}^{2n} \lesssim_{m_1,m_2} \sum_{\ell=0}^{2(m_1\vee m_2)+2} \| J_\ell \mathfrak{A} \|_{2}^{2n} \lesssim_{m_1,m_2} \| \mathfrak{A} \|_{2}^{2n},
    \end{split}
    \end{equation*}
    where $J_n$ is the projection on the $n$-th Wiener chaos space and the last inequality follows from the $L^2$-orthogonality of the Wiener chaos. This gives the first term on the right hand side of \eqref{e:main_idea_1}.
    
    Now we turn to the sum in $\mathfrak{S}_2$. By H\"older and Minkowski inequalities and that $\qQ_{\sigma} \subset \sS_{2n}^{c}$ for $\sigma \in \mathfrak{S}_2$, we get
    \begin{equation*}
        \sum_{\sigma \in \mathfrak{S}_2} \E \prod_{k=1}^{2n} Z_{Q_{\sigma(k)}} \leq \sum_{\sigma \in \mathfrak{S}_2} \prod_{k=1}^{2n} \|Z_{Q_{\sigma(k)}}\|_{2n} \leq \int_{\sS_{2n}^{c}} \prod_{i=1}^{2n} \bigg\| \int A(x_i, y) \d_{\Btheta}^{\r} \fF(\Btheta, x_i, y) dy \bigg\|_{2n} d \vec{x}\;.
    \end{equation*}
    By Lemma~\ref{le:fixed_bound} and hypercontractivity estimates, we have
    \begin{equation*}
    \begin{split}
       \bigg\| \int|A(x_i,y)| \d_{\Btheta}^{\r} \fF(\Btheta,x_i,y) dy \bigg\|_{2n} &= \bigg( \int \prod_{k=1}^{2n} |A(x_i,y_k)| \E \prod_{k=1}^{2n} \d_{\Btheta}^{\r} \fF(\Btheta,x_i,y_k) d\vec{y} \bigg)^{\frac{1}{2n}}\\
       &\lesssim \eps^{-\frac{\alpha(m_1+m_2)}{2n}} \bigg\| \int |A(x_i,y)| \sum_{k=m_2}^{(m_1\vee m_2)+1} Y^{\diamond k}dy \bigg\|_{2}.
    \end{split}
    \end{equation*}
    By Wick's formula and Assumption~\ref{as:cor}, for $m_2 \leq k \leq (m_1 \vee m_2) +1$, we have
    \begin{equation*}
        \E \big( Y_1^{\diamond k} Y_{2}^{\diamond k} \big) = (\E Y_1Y_2)^k \lesssim (\E Y_1Y_2)^{m_2} = \E \big( Y_1^{\diamond m_2} Y_{2}^{\diamond m_2} \big)\;.
    \end{equation*}
    We can thus replace the sum of $Y^{\diamond k}$ over $m_2 \leq k \leq (m_1\vee m_2) +1$ by the single term $Y^{\diamond m_2}$. This completes the proof of the proposition. 
\end{proof}

The following result follows from \cite[Theorem~A.3]{HQ}, which gives the desired bound of the first terms on the right hand side of \eqref{e:main_idea_1} and \eqref{e:main_idea_2} with $A(x,y)=\varphi^{\lambda}(x) K(x,y)$.

\begin{prop}\label{pr:poly_bound}
    For $K, m_1, m_2$ satisfying our assumptions and any $\eta > 0$, we have
    \begin{equ}\label{e:poly_bound}
        \bigg\| \iint |\varphi^{\lambda}(x) K(x,y)| \Big( \sum_{k=m_1}^{(m_1\vee m_2)+1} X^{\diamond k} \Big) \Big( \sum_{k=m_2}^{(m_1\vee m_2)+1} Y^{\diamond k}\Big) dxdy \bigg\|_2 \lesssim \Big( \frac{\eps}{\lambda} \Big)^{\frac{\alpha(m_1+m_2)}{2}} \lambda^{\gamma-\eta},
    \end{equ}
    where the proportionality constant is independent of $\lambda$ and $\eps$.
\end{prop}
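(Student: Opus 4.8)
The plan is to invoke the multivariate correlation bound \cite[Theorem~A.3]{HQ} (which is stated precisely for products of Wick powers of a stationary Gaussian field against a test function and a kernel of the prescribed singularity type) and track how the two rescalings — the test function at scale $\lambda$ and the Gaussian field at scale $\eps$ via $X = \eps^{\alpha/2}\Psi_\eps$ — enter the final power count. First I would expand the square on the left-hand side, writing $\|\,\cdot\,\|_2^2$ as an iterated integral over $x_1,y_1,x_2,y_2$ of the kernel $|\varphi^\lambda(x_1)K(x_1,y_1)\varphi^\lambda(x_2)K(x_2,y_2)|$ times the expectation $\E\big(\prod_{i}\big(\sum_{k} X_i^{\diamond k}\big)\big(\sum_k Y_i^{\diamond k}\big)\big)$. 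Since $m_2 \le k \le (m_1\vee m_2)+1$ implies $\E(Y_1^{\diamond k}Y_2^{\diamond k}) \lesssim \E(Y_1^{\diamond m_2}Y_2^{\diamond m_2})$ by Assumption~\ref{as:cor} (and likewise for $X$), as already observed at the end of the proof of Proposition~\ref{pr:main_idea}, it suffices to treat the single-term product $X^{\diamond m_1}Y^{\diamond m_2}$, i.e.\ the quantity $\big\|\iint |\varphi^\lambda(x)K(x,y)|\,X^{\diamond m_1}Y^{\diamond m_2}\,dxdy\big\|_2$.

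Next I would scale out $\eps$: since $X_i = \eps^{\alpha/2}\Psi_\eps(x_i)$, each factor $X_i^{\diamond m_1}$ carries $\eps^{\alpha m_1/2}$ and each $Y_i^{\diamond m_2}$ carries $\eps^{\alpha m_2/2}$, so the four-point expectation equals $\eps^{\alpha(m_1+m_2)}$ times the corresponding expectation for the unscaled field $\Psi_\eps$, which under Assumption~\ref{as:cor} is comparable to that of a field with covariance $(|x-y|+\eps)^{-\alpha} \le |x-y|^{-\alpha}$. At this point the expectation is bounded, up to constants, by a sum of products of powers of $|x_i - x_j|^{-\alpha}$, $|x_i - y_j|^{-\alpha}$, $|y_i - y_j|^{-\alpha}$ over the pairings compatible with the Wick contraction, and the total homogeneity of this kernel in the four variables is exactly $-\alpha(m_1+m_2)$. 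Feeding this, together with the kernel bound $|K(x,y)| \lesssim |x-y|^{-(|\fs|-\gamma)}$ for $r_e = 0$ (Proposition~\ref{pr:kernel_bound}; the case $r_e \ge 1$ is handled analogously using the three-regime bound \eqref{e:kernel_bound}, where the extra $x$-homogeneity near the origin only improves integrability) and the rescaled test function $\varphi^\lambda$, into \cite[Theorem~A.3]{HQ}, one checks that Assumption~\ref{as:stochastic} is precisely what guarantees the hypotheses of that theorem (the constraints $0 < \alpha m_j < |\fs|$ give integrability of each cluster of Wick legs, and $\alpha(m_1+m_2) \le |\fs| + 2\gamma$ ensures the combined kernel-times-correlation homogeneity is above the critical threshold for the $\lambda$-scaling to produce a nonnegative power). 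The theorem then outputs a bound of the form $\lambda^{-\#}$ where the exponent is read off by homogeneity counting: the integrand has scaling dimension $-\alpha(m_1+m_2) - 2(|\fs|-\gamma)$ from the kernels and the correlation, against $2\cdot 2|\fs|$ from the volume of the four-point integral restricted to the support of $\varphi^\lambda$ (scale $\lambda$) and $\varphi^\lambda$'s own $\lambda^{-2|\fs|}$ normalisation; balancing gives $\|\cdot\|_2^2 \lesssim \eps^{\alpha(m_1+m_2)}\lambda^{-\alpha(m_1+m_2)+2\gamma - \eta}$, hence the claimed bound after taking the square root. The arbitrarily small $\eta$ absorbs the logarithmic losses that appear when $\alpha(m_1+m_2) = |\fs| + 2\gamma$ (the borderline case in Assumption~\ref{as:stochastic}) or when individual subintegrals are log-divergent.

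The main obstacle I expect is purely bookkeeping: verifying that the somewhat intricate hypotheses of \cite[Theorem~A.3]{HQ} — which is stated for a general labelled-graph / kernel-assignment framework — are met by our specific configuration, in particular that the subtracted Taylor polynomial in the definition \eqref{e:kernel} of $K$ (active when $r_e \ge 1$) provides exactly the renormalisation needed for the singularity of $K_0$ at $y = 0$ to be integrable against $X^{\diamond m_1}$ after the $x$-integration, and that no spurious large-scale divergence appears because $K_0$ (hence $K$, after the polynomial subtraction on the compact region $|y| \le 2$) is compactly supported. Once the dictionary between our notation and that of \cite[Appendix~A]{HQ} is set up, the power of $\eps$ and $\lambda$ falls out of the scaling computation with no further work; there is no delicate probabilistic estimate here beyond the Gaussian correlation comparison already granted by Assumption~\ref{as:cor}.
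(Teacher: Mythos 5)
Your proposal is correct and follows essentially the same route as the paper, which simply records that the bound is an instance of \cite[Theorem~A.3]{HQ} (the sentence immediately preceding the proposition statement) without giving further detail. Your additional elaboration — reducing to the single term $X^{\diamond m_1}Y^{\diamond m_2}$ via Assumption~\ref{as:cor}, factoring out $\eps^{\alpha(m_1+m_2)}$, and reading off the $\lambda$-power from the homogeneity of the kernels, correlations and test function — is the natural way to unpack that citation, with one small caveat worth flagging: the Taylor subtraction in \eqref{e:kernel} is in the $x$-variable about $x=0$, so its role is to make $K(x,y)$ vanish to order $r_e$ as $x\to 0$ uniformly in $y$ (taming the region $|y|\gg|x|\sim\lambda$), rather than regularising a ``singularity of $K_0$ at $y=0$'' as you phrase it; this is precisely the renormalisation structure that \cite[Theorem~A.3]{HQ} is designed to absorb.
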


The following two propositions are devoted to estimating the second terms on the right hand side of \eqref{e:main_idea_1} and \eqref{e:main_idea_2} with $A(x,y)$ being $\varphi^{\lambda}(x) K(x,y)$ and its variants. 

\begin{prop}
\label{pr:nelsontype_bound_x}
	Let $\gG(x) := \|\int |K(x,y)| Y^{\diamond m_2}dy\|_{2}$ and $\eta>0$ be sufficiently small. If $\gamma \leq \frac{\alpha m_2}{2}$, then
    \begin{equation*}
        \gG(x) \lesssim \eps^{\gamma-\eta}\;.
    \end{equation*}
    If $\gamma > \frac{\alpha m_2}{2}$, then we have
    \begin{equation*}
        \gG(x) \lesssim \eps^{\frac{\alpha m_2}{2}} |x|^{\gamma - \frac{\alpha m_2}{2} - \eta}\;.
    \end{equation*}
	The proportionality constants are independent of $\eps$ and $\lambda$ in both situations. 
\end{prop}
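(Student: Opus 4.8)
\emph{Overview and reduction.} The plan is to reduce $\gG(x)^2$ to a purely deterministic integral and then estimate it by classical convolution bounds for regularised singular kernels, splitting into the two cases $r_e=0$ (i.e. $\gamma\le\frac{\alpha m_2}{2}$) and $r_e\ge1$ (i.e. $\gamma>\frac{\alpha m_2}{2}$), which correspond to the two regimes of Proposition~\ref{pr:kernel_bound}. Since $\int |K(x,y)|\,Y^{\diamond m_2}(y)\,dy$ lies entirely in the $m_2$-th Wiener chaos, its $L^2$ norm is computed exactly from the covariance of $Y^{\diamond m_2}$. Using $\E(\Psi_\eps^{\diamond m_2}(y_1)\Psi_\eps^{\diamond m_2}(y_2))=m_2!\,(\E\Psi_\eps(y_1)\Psi_\eps(y_2))^{m_2}$ and the upper bound of Assumption~\ref{as:cor}, and recalling $|x|,|y|\le 2$ with $K_0$ compactly supported,
\[
\gG(x)^2\;\lesssim\;\eps^{\alpha m_2}\,J(x)\;,\qquad J(x):=\iint\frac{|K(x,y_1)|\,|K(x,y_2)|}{(|y_1-y_2|+\eps)^{\alpha m_2}}\,dy_1\,dy_2\;,
\]
the integral over a fixed bounded region. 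By Assumption~\ref{as:stochastic}, $0<\alpha m_2<|\fs|$, so $(|y_1-y_2|+\eps)^{-\alpha m_2}$ is locally integrable uniformly in $\eps$; the $\eps$- and $|x|$-powers come from its interaction with the (possibly stronger) singularities of $K$.

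\emph{Case $r_e=0$.} Here Proposition~\ref{pr:kernel_bound} gives $|K(x,y)|\lesssim|x-y|^{-(|\fs|-\gamma)}$, so after translating $z_i=y_i-x$, $J(x)$ is bounded by $\iint_{|z_i|\lesssim1}|z_1|^{-(|\fs|-\gamma)}|z_2|^{-(|\fs|-\gamma)}(|z_1-z_2|+\eps)^{-\alpha m_2}\,dz_1dz_2$. Since $\gamma<\alpha m_2$ in this case, integrating out $z_2$ by the standard regularised convolution estimate (cf. \cite[Appendix~A]{HQ}) produces a factor $(|z_1|+\eps)^{-(\alpha m_2-\gamma)}$; integrating out $z_1$ then leaves $\int_{|z|\lesssim1}|z|^{-(|\fs|-\gamma)}(|z|+\eps)^{-(\alpha m_2-\gamma)}\,dz$, whose total exponent $|\fs|+\alpha m_2-2\gamma$ is $\ge|\fs|$, hence this is $\lesssim\eps^{2\gamma-\alpha m_2}$ when $\alpha m_2>2\gamma$ and $\lesssim\log(1/\eps)\lesssim_\eta\eps^{2\gamma-\alpha m_2-\eta}$ at the boundary $\alpha m_2=2\gamma$. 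Thus $J(x)\lesssim\eps^{2\gamma-\alpha m_2-\eta}$, so $\gG(x)^2\lesssim\eps^{2\gamma-\eta}$, which is the first claimed bound after relabelling $\eta$.

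\emph{Case $r_e\ge1$.} Now $K$ is not translation invariant, so I would decompose according to the three regimes of \eqref{e:kernel_bound}. Writing $R:=|x|$ and splitting each $y_i$-integral into $|y_i|>2R$, $\frac R2<|y_i|\le 2R$, $|y_i|\le\frac R2$ gives nine pieces, each of which, after rescaling $y_i=Rv_i$ (and $\eps\mapsto\eps/R$), is a self-similar singular integral. Using the value $r_e=\lceil\gamma-\frac{\alpha m_2}{2}\rceil\vee0$ — which is precisely what keeps every exponent occurring inside the integrable window — together with $0<\alpha m_2<|\fs|$ and $0<\gamma\le\frac{|\fs|}{2}$, one checks that each of the nine pieces is $\lesssim R^{2\gamma-\alpha m_2}$, up to a harmless $R^{-\eta}$ (or logarithmic) loss only when a singular exponent hits the critical value $|\fs|$. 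Summing, $J(x)\lesssim|x|^{2\gamma-\alpha m_2-\eta}$, so $\gG(x)^2\lesssim\eps^{\alpha m_2}|x|^{2\gamma-\alpha m_2-\eta}$, which is the second claimed bound.

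\emph{Main obstacle.} The non-trivial bookkeeping lies entirely in the case $r_e\ge1$: one must verify that every region contribution, built from the \emph{asymmetric} kernel bounds of \eqref{e:kernel_bound} and the regularised correlation, reproduces exactly the scale-invariant power $|x|^{2\gamma-\alpha m_2}$ and never a negative power of $|x|$ or of $\eps$. This is where the precise definition of $r_e$ and all three constraints of Assumption~\ref{as:stochastic} are genuinely used, and where the borderline (logarithmic) exponents must be absorbed into the arbitrarily small $\eta>0$ of the statement; the rest is routine power counting.
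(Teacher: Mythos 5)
Your reduction of $\gG(x)^2$ to the deterministic double integral $J(x)$ via Wick's formula and Assumption~\ref{as:cor} is exactly what the paper does, and your treatment of the $r_e=0$ case is correct and matches the paper up to an inessential reordering of the two power-counting steps (the paper pulls $\eps^{2\gamma-\eta}$ out of the correlation factor directly and then does one singular convolution, whereas you do two sequential regularised convolutions; both are equivalent). The $r_e\ge 1$ case, however, is only a sketch, and the ``routine power counting'' you defer is precisely where the paper's proof spends its effort: the paper splits $\gG\le\gG_1+\gG_2+\gG_3$ by triangle inequality (giving three diagonal pieces, not your nine), and for $\gG_1$ it needs an explicit case distinction between $\gamma>r_e$ and $\gamma\le r_e$ before the exponents close up. Your rescaling $y_i=Rv_i$ is not as clean as the overview suggests, because in the outer regime $\{|y|>2|x|\}$ the rescaled domain is $2<|v|\lesssim 1/R$, which is not a fixed bounded set, and $|v|^{-(|\fs|-\gamma+r_e)}$ need not be integrable there when $\gamma\ge r_e$; the finiteness then comes from the interaction with the correlation factor and from the cutoff at scale $1/R$, which is exactly the $\gamma$-vs-$r_e$ dichotomy you omit. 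So: same strategy as the paper, $r_e=0$ complete, but $r_e\ge 1$ is asserted rather than proved, and the one place where the assertion actually needs an extra idea (the $\gamma\le r_e$ subcase in the outer region) is left unaddressed.
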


\begin{proof}
    If $\gamma \leq \frac{\alpha m_2}{2}$, then $r_e = 0$, and we have
    \begin{equation*}
        |\gG(x)|^2 \lesssim \int \int \frac{1}{|y_1 - x|^{|\fs|-\gamma} |y_2 - x|^{|\fs|-\gamma}} \cdot \frac{\eps^{\alpha m_2}}{(|y_1 - y_2| + \eps)^{\alpha m_2}} d y_1 d y_2 
    \end{equation*}
    Since $\alpha m_2 > 2\gamma-\eta$, we have
    \begin{equation*}
        |\gG(x)|^2 \lesssim  \int \frac{\eps^{2\gamma-\eta}}{|y_1 - x|^{|\fs|-\gamma}} \bigg( \int \frac{1}{|y_2 - x|^{|\fs|-\gamma} |y_1 - y_2|^{2\gamma-\eta}} d y_2 \bigg) d y_1 \lesssim \eps^{2\gamma - \eta}\;.
    \end{equation*}
    If $\gamma > \frac{\alpha m_2}{2}$, then
    \begin{equation*}
        \gamma - \frac{\alpha m_2}{2} \leq r_e < \gamma - \frac{\alpha m_2}{2} + 1\;.
    \end{equation*}
    We have
    \begin{equation*}
        \gG(x) \leq \gG_1(x) + \gG_2(x) + \gG_3(x)\;,
    \end{equation*}
    which correspond to domains of integration $\{|y| > 2 |x|\}$, $\{\frac{|x|}{2} \leq |y| < 2 |x|\}$ and $\{|y| \leq \frac{|x|}{2}\}$ respectively. 

    By the first bound in \eqref{e:kernel_bound}, we have
    \begin{equation*}
        |\gG_1(x)|^{2} \lesssim \eps^{\alpha m_2} |x|^{2 r_e} \int_{|y_1| > 2|x|} \frac{1}{|y_1|^{|\fs|-\gamma + r_e}} \bigg( \int_{|y_2| > 2|x|} \frac{1}{|y_2|^{|\fs|-\gamma + r_e} |y_1 - y_2|^{\alpha m_2}} d y_2 \bigg) d y_1\;.
    \end{equation*}
    Since $r_e \geq \gamma - \frac{\alpha m_2}{2}$, we have $\alpha m_2 + r_e - \gamma + \eta > 0$. Hence, the integral in the parenthesis above is bounded by $\frac{1}{|y_1|^{\alpha m_2 + r_e + \eta - \gamma}}$ if $\gamma > r_e$, and bounded by $\frac{1}{|x|^{r_e - \gamma + \eta}} \cdot \frac{1}{|y_1|^{\alpha m_2 -\eta}}$ if $\gamma \leq r_e$. Both situations yield
    \begin{equation*}
        |\gG_1 (x)|^{2} \lesssim \eps^{\alpha m_2} |x|^{2\gamma - \alpha m_2 - \eta}\;.
    \end{equation*}
    As for $\gG_2$, by the second bound in \eqref{e:kernel_bound}, we have
     \begin{equation*}
        |\gG_2(x)|^{2} \lesssim \int_{\frac{|x|}{2} < |y_1| \leq 2|x|} \frac{\eps^{\alpha m_2}}{|y_1 - x|^{|\fs|-\gamma}} \bigg( \int_{\frac{|x|}{2} < |y_2| \leq 2|x|} \frac{1}{|y_2 - x|^{|\fs|-\gamma} |y_2 - y_1|^{\alpha m_2}} d y_2 \bigg) d y_1\;. 
    \end{equation*}
    The integral in the parenthesis above is bounded by $\frac{|x|^{\eta}}{|y_1 - x|^{\alpha m_2 - \gamma + \eta}}$ if $\alpha m_2 \geq \gamma$, and bounded by $|x|^{\gamma - \alpha m_2}$ if $\alpha m_2 < \gamma$. In both situations, we have
    \begin{equation*}
        |\gG_2 (x)|^{2} \lesssim \eps^{\alpha m_2} |x|^{2 \gamma - \alpha m_2 - \eta}\;.
    \end{equation*}
    Finally for $\gG_3$, using the third bound in \eqref{e:kernel_bound}, we have
    \begin{equation*}
        |\gG_3(x)|^{2} \lesssim \int_{|y_1| \leq \frac{|x|}{2}} \frac{\eps^{\alpha m_2} |x|^{2(r_e - 1)}}{|y_1|^{|\fs|-\gamma + r_e - 1}} \bigg( \int_{|y_2| \leq \frac{|x|}{2}} \frac{1}{|y_2|^{|\fs|-\gamma + r_e -1} |y_2 - y_1|^{\alpha m_2}} d y_2 \bigg) d y_1\;.
    \end{equation*}
    Note that by assumption, $\alpha m_2 - \gamma + r_e - 1$ is always     less than $|\fs|$. If $\alpha m_2 - \gamma + r_e - 1 \geq 0$, then the integral in the parenthesis above is bounded by $\frac{1}{|y_1|^{\alpha m_2 - \gamma + r_e - 1 + \eta}}$; otherwise it is bounded by $|x|^{\gamma - \alpha m_2 - (r_e - 1)}$. In both situations, we have
    \begin{equation*}
        |\gG_3(x)|^{2} \lesssim \eps^{\alpha m_2} |x|^{2 \gamma - \alpha m_2 - \eta}. 
    \end{equation*}
    This concludes the proof of the proposition. 
\end{proof}

\begin{rmk}
    The appearance of the small power $\eta>0$ comes from the logarithm bounds of some integrals. In the above proof, some of the appearances of the small power $\eta$ are not necessary. But note that the bounds with extra power $\eta>0$ still hold since $1\lesssim \frac{1}{|y|^\eta}$, where $y$ is restricted to $|y|\leq2$. We do not distinguish different cases to avoid checking which integral requires logarithm bound. The same convention is used in the subsequent proof.
\end{rmk}

\begin{prop}\label{pr:nelsontype_bound_y}
    Let $\hH(y) := \|\int |K(x,y)| \cdot |\varphi^{\lambda}(x)| \cdot X^{\diamond m_1} dx\|_{2}$. We have the bounds
    \begin{equ} \label{e:nelsontype_bound_y1}
       \hH(y) \lesssim \frac{\eps^{\frac{\alpha m_1}{2}}\lambda^{r_e -\frac{\alpha m_1}{2}}}{|y|^{|\fs|-\gamma+r_e}}\;, \qquad |y| > 2 \lambda\;,
    \end{equ} 
    and
    \begin{equation} \label{e:nelsontype_bound_y2}
       \hH(y) \lesssim \boldsymbol{1}_{\{r_e \geq 1\}} \cdot \frac{\eps^{\frac{\alpha m_1}{2}} \lambda^{r_e-\frac{\alpha m_1}{2}-1}}{|y|^{|\fs|-\gamma+r_e-1}} + \eps^{(\gamma \wedge \frac{\alpha m_1}{2}) - \eta} \lambda^{\gamma-|\fs|- (\gamma \wedge \frac{\alpha m_1}{2}) - \eta}\;, \quad |y| \leq 2\lambda
    \end{equation}
    uniformly over $\eps, \lambda \in (0,1)$ and over $y$ in the above domain. 
\end{prop}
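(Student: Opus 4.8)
The plan is to reduce \eqref{e:nelsontype_bound_y1}--\eqref{e:nelsontype_bound_y2} to a purely deterministic integral inequality, and then to run a dyadic case analysis comparing the sizes of $|x|$ and $|y|$ via the three regimes of Proposition~\ref{pr:kernel_bound}. We may assume $\eps\leq\lambda$ throughout: in the complementary regime every $x$ in the support of $\varphi^\lambda$ has $|x|\leq\lambda<\eps$, so $(|x_1-x_2|+\eps)^{-\alpha m_1}\sim\eps^{-\alpha m_1}$ and the estimates below only simplify. To set up the reduction, write $\hH(y)^2$ as a double integral over $x_1,x_2$, use $\E(X_1^{\diamond m_1}X_2^{\diamond m_1})=m_1!\,(\E X_1X_2)^{m_1}$ together with the upper bound of Assumption~\ref{as:cor}, and bound $|\varphi^\lambda(x)|\leq\lambda^{-|\fs|}\boldsymbol{1}_{\{|x|\leq\lambda\}}$, to obtain
\begin{equation*}
    \hH(y)^2 \lesssim \eps^{\alpha m_1}\lambda^{-2|\fs|}\iint\limits_{|x_1|,|x_2|\leq\lambda}\frac{|K(x_1,y)|\,|K(x_2,y)|}{(|x_1-x_2|+\eps)^{\alpha m_1}}\,dx_1\,dx_2\;.
\end{equation*}
Everything afterwards estimates this deterministic integral; the only analytic input beyond Proposition~\ref{pr:kernel_bound} is the constraint $\alpha m_1<|\fs|$ from Assumption~\ref{as:stochastic}, which gives $\int_{|x|\leq R}(|x-x'|+\eps)^{-\alpha m_1}\,dx\lesssim R^{|\fs|-\alpha m_1}$. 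In the far regime $|y|>2\lambda$, every $|x_i|\leq\lambda$ satisfies $|y|>2|x_i|$, so by the first bound of Proposition~\ref{pr:kernel_bound} (or its $r_e=0$ version) $|K(x_i,y)|\lesssim|x_i|^{r_e}|y|^{-(|\fs|-\gamma+r_e)}\leq\lambda^{r_e}|y|^{-(|\fs|-\gamma+r_e)}$; pulling these out and integrating $(|x_1-x_2|+\eps)^{-\alpha m_1}$ over $\{|x_i|\leq\lambda\}$ immediately yields \eqref{e:nelsontype_bound_y1}.

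For the near regime $|y|\leq2\lambda$, I would partition $\{|x|\leq\lambda\}$ into $\mathrm{A}=\{|x|<|y|/2\}$, $\mathrm{B}=\{|y|/2\leq|x|\leq2|y|\}$ and $\mathrm{C}=\{2|y|<|x|\leq\lambda\}$, use Minkowski, and treat the resulting pieces $\hH_\mathrm{A},\hH_\mathrm{B},\hH_\mathrm{C}$ with the matching case of Proposition~\ref{pr:kernel_bound} (when $r_e=0$ one just uses $|K(x,y)|\lesssim|x-y|^{-(|\fs|-\gamma)}$ everywhere). Region $\mathrm{C}$ is nonempty only when $r_e\geq1$, where the computation repeats the far-regime one with $r_e$ replaced by $r_e-1$ and produces the first term of \eqref{e:nelsontype_bound_y2}; when $r_e=0$ this region is handled like region $\mathrm{B}$ below and falls into the second term. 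On region $\mathrm{A}$, one again factors out $|K(x_i,y)|\lesssim|x_i|^{r_e}|y|^{-(|\fs|-\gamma+r_e)}$, integrates $(|x_1-x_2|+\eps)^{-\alpha m_1}$ over $\{|x_i|\lesssim|y|\}$, and arrives at $\hH_\mathrm{A}(y)\lesssim\eps^{\alpha m_1/2}\lambda^{-|\fs|}(|y|\vee\eps)^{\gamma-\alpha m_1/2}$; bounding $(|y|\vee\eps)^{\gamma-\alpha m_1/2}$ by $(2\lambda)^{\gamma-\alpha m_1/2}$ when $\gamma\geq\alpha m_1/2$ and by $\eps^{\gamma-\alpha m_1/2}$ when $\gamma<\alpha m_1/2$, this is dominated by the second term of \eqref{e:nelsontype_bound_y2}.

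The heart of the matter is region $\mathrm{B}$. There $|K(x_i,y)|\lesssim|x_i-y|^{-(|\fs|-\gamma)}$ and $|x_i-y|\lesssim|y|$, so after the $x_2$-integration one is looking at a convolution of two $\fs$-homogeneous singularities of total order $(|\fs|-\gamma)+\alpha m_1$, and the decisive dichotomy is whether this exceeds $|\fs|$, i.e.\ whether $\alpha m_1>\gamma$. When $\alpha m_1>\gamma$ the convolution yields an $\eps$-regularised singularity $\lesssim(|x_1-y|+\eps)^{\gamma-\alpha m_1}$ (up to a logarithm at the threshold), and the subsequent $x_1$-integration makes the double integral $\lesssim|y|^{2\gamma-\alpha m_1}$ if $\alpha m_1<2\gamma$ and $\lesssim\eps^{2\gamma-\alpha m_1}$ if $\alpha m_1>2\gamma$; when $\alpha m_1\leq\gamma$ the two singularities are jointly integrable and the double integral is $\lesssim|y|^{2\gamma-\alpha m_1}$ directly. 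Substituting back into $\hH_\mathrm{B}^2\lesssim\eps^{\alpha m_1}\lambda^{-2|\fs|}\cdot(\text{double integral})$: when $2\gamma\geq\alpha m_1$ one uses $|y|^{2\gamma-\alpha m_1}\leq(2\lambda)^{2\gamma-\alpha m_1}$ to reach $\hH_\mathrm{B}\lesssim\eps^{\alpha m_1/2-\eta}\lambda^{\gamma-\alpha m_1/2-|\fs|}$, while when $2\gamma<\alpha m_1$ one uses $\eps^{\alpha m_1}\cdot\eps^{2\gamma-\alpha m_1}=\eps^{2\gamma}$ to reach $\hH_\mathrm{B}\lesssim\eps^{\gamma-\eta}\lambda^{-|\fs|}$; in both cases this is the second term of \eqref{e:nelsontype_bound_y2}, the borderline logarithm being absorbed into $\eps^{-\eta}$ and $\eps\leq\lambda$ used to distribute the loss. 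Together with regions $\mathrm{A}$ and $\mathrm{C}$ this proves \eqref{e:nelsontype_bound_y2}.

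The main obstacle is precisely this bookkeeping in region $\mathrm{B}$: one has to keep careful track, across the sub-cases set by the signs of $\alpha m_1-\gamma$ and $\alpha m_1-2\gamma$, of whether the power of the integral that survives is a power of $|y|$ (hence boundable by a power of $\lambda$ via $|y|\leq2\lambda$) or a genuine power of $\eps$, and to handle the logarithmic threshold cases by conceding the arbitrarily small loss $\eps^{-\eta}\lambda^{-\eta}$. The remaining steps are routine once the deterministic reduction of the first paragraph is available.
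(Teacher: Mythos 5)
Your proposal is correct and provides precisely the details that the paper leaves out (the paper's ``proof'' of this proposition is only the remark that the argument parallels Proposition~\ref{pr:nelsontype_bound_x}). The deterministic reduction via $\E(X_1^{\diamond m_1}X_2^{\diamond m_1})=m_1!\,(\E X_1X_2)^{m_1}$ together with Assumption~\ref{as:cor} and $|\varphi^\lambda|\leq\lambda^{-|\fs|}\boldsymbol{1}_{\{|x|\leq\lambda\}}$ is the right starting point, and your three-region split of $\{|x|\leq\lambda\}$ according to $|x|\lessgtr|y|$ mirrors the $\gG_1,\gG_2,\gG_3$ decomposition used for $\gG$, adapted to the extra scale $\lambda$ coming from the test function. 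The far-regime computation, the region-C computation producing the $\boldsymbol{1}_{\{r_e\geq1\}}$ term via the shifted exponent $r_e-1$, the observation that region~C with $r_e=0$ merges into the ``second-term'' estimate, and the region-B dichotomies on the signs of $\alpha m_1-\gamma$ and $\alpha m_1-2\gamma$ (with the logarithmic threshold absorbed into $\eps^{-\eta}\lambda^{-\eta}$) all check out, including the use of $\eps\leq\lambda$ to distribute the $\eta$-loss between $\eps$ and $\lambda$. The one place you should state more carefully is the reduction to $\eps\leq\lambda$: for the near regime it is worth noting explicitly that when $\lambda<\eps$ one has $|y|\leq2\lambda<2\eps$ as well, so both singular factors saturate at scale $\eps$ and the deterministic integral collapses to $\lambda^{-2|\fs|}\bigl(\int_{|x|\leq\lambda}|K(x,y)|\,dx\bigr)^2$, which is then dominated by the claimed bound by a short direct computation; you only verified this for the far regime.
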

\begin{proof}
    The arguments are very similar to those in proving Proposition~\ref{pr:nelsontype_bound_x}, so we omit the technical details here.
\end{proof}

\begin{rmk}
    The appearances of $\lambda$ on the right hand side of \eqref{e:nelsontype_bound_y1} and \eqref{e:nelsontype_bound_y2} are due to the presence of $\varphi^{\lambda}$ in the definition of $\hH(y)$, which restricts the integration domain to a box of size $\lambda$. In the previous proposition the integration domain is of constant size, so there is no $\lambda$ appearing in the bounds.
\end{rmk}

\begin{rmk}
    One can also use the bounds
    \begin{equation*}
        \frac{\eps^{\alpha m_2}}{(|y_1 - y_2| + \eps)^{\alpha m_2}}  \lesssim 1 \qquad \text{and} \qquad \frac{\eps^{\alpha m_1}}{(|x_1 - x_2| + \eps)^{\alpha m_1}} \lesssim 1
    \end{equation*}
    to improve both Propositions~\ref{pr:nelsontype_bound_x} and~\ref{pr:nelsontype_bound_y} by replacing $\eps$ with $\eps \wedge \lambda$ in the statements. But the current statements are already sufficient for the proof of the main theorem. 
\end{rmk}

\begin{lem} \label{le:region_smallness_int1}
    For arbitrarily small $\eta > 0$, we have the bound
    \begin{equation} \label{e:region_smallness_int1}
		\int_{\sS_{2n}^{c}} \prod_{i=1}^{2n} \frac{\boldsymbol{1}_{|y_i|\geq 2\lambda}}{|y_i|^{|\fs|-\gamma+r_e}} d\vec{y} \lesssim \lambda^{2n(\gamma-r_e-\eta)} \bigg( \frac{\eps}{\lambda} \bigg)^{n \alpha m_2}, 
	\end{equation}
    where the proportionality constant depends on $\eta$. 
\end{lem}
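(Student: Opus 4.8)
The plan is to reduce the estimate to a single‑cluster bound via the decomposition \eqref{e:partition_cluster}, and then to handle each cluster by separating the ``bulk'' regime (where all points of the cluster are far from the origin) from the ``boundary'' regime (where they all sit in a thin shell around the origin of radius comparable to $\eps$). Write $\beta := |\fs| - \gamma + r_e$, so that the integrand in \eqref{e:region_smallness_int1} is $\prod_i |y_i|^{-\beta}\mathbf{1}_{\{2\lambda \leq |y_i|\}}$, and recall that we integrate over $\{|y_i|\leq 2\}$. Since $r_e \geq 0$ and $\gamma \leq |\fs|/2$, we have $\beta \geq |\fs| - \gamma \geq |\fs|/2$, so $m\beta \geq |\fs|$ for every integer $m \geq 2$ (any borderline logarithmic divergence below will be absorbed into the $\eta$‑loss). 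Plugging \eqref{e:partition_cluster} into the left‑hand side and using that both the domain $\bigotimes_{\set\in\pP}\cC_{|\set|}$ and the integrand factor over the groups of $\pP$ (there being finitely many partitions, all into groups of size at least $2$), it suffices to prove, for each $m\geq 2$, the per‑cluster bound
\begin{equation*}
    \int_{\cC_m \cap \{2\lambda \leq |y_i| \leq 2\}} \prod_{i=1}^{m} \frac{1}{|y_i|^{\beta}}\, d\vec{y} \;\lesssim\; \lambda^{m(\gamma - r_e - \eta)}\Big(\frac{\eps}{\lambda}\Big)^{\frac{m\alpha m_2}{2}},
\end{equation*}
since multiplying this over $\set\in\pP$ and using $\sum_{\set\in\pP}|\set| = 2n$ yields exactly \eqref{e:region_smallness_int1}.

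To prove the per‑cluster bound, fix a ``chain'' labelling $y_{i_1},\dots,y_{i_m}$ of the cluster with $|y_{i_k}-y_{i_{k+1}}| \leq L\eps$ (there are at most $m!$ of these and they are treated identically), and split according to the size of the lead point $y_{i_1}$. On the bulk part $\{|y_{i_1}| \geq 2mL\eps\}$, the cluster diameter bound gives $\tfrac12|y_{i_1}| \leq |y_i| \leq 2|y_{i_1}|$ for every $i$, hence $\prod_i |y_i|^{-\beta} \lesssim |y_{i_1}|^{-m\beta}$; integrating $y_{i_2},\dots,y_{i_m}$ over successive balls of radius $L\eps$ costs a volume $\lesssim \eps^{|\fs|(m-1)}$, and since $m\beta \geq |\fs|$ the remaining radial integral $\int_{\max(2\lambda,\,2mL\eps)\leq |z|\leq 2}|z|^{-m\beta}\,dz$ is controlled, up to an $\eta$‑loss in the equality case, by its lower endpoint, so the bulk part is $\lesssim \eps^{|\fs|(m-1)}\max(\lambda,\eps)^{|\fs|-m\beta-\eta}$. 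On the boundary part $\{2\lambda \leq |y_{i_1}| < 2mL\eps\}$ — nonempty only when $\lambda \lesssim \eps$ — every point of the cluster lies in the shell $\{2\lambda \leq |y| < 3mL\eps\}$, so this part is at most $\big(\int_{2\lambda\leq|z|<3mL\eps}|z|^{-\beta}\,dz\big)^m$, and the one‑dimensional radial integral $\int_{2\lambda}^{3mL\eps}\rho^{|\fs|-1-\beta}\,d\rho$ equals, up to constants and an $\eta$‑loss, $\eps^{\gamma-r_e}$ if $r_e < \gamma$, $\lambda^{\gamma-r_e}$ if $r_e > \gamma$, and a logarithm (hence $\lesssim_\eta (\eps/\lambda)^{\eta}$) if $r_e = \gamma$.

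It remains to check that each of these two contributions is bounded by $\lambda^{m(\gamma-r_e)}(\eps/\lambda)^{\frac{m\alpha m_2}{2}}$ times the $\lambda^{-\eta}$‑slack. Using $\beta = |\fs| - (\gamma - r_e)$ and factoring out matching powers, every such comparison reduces to an inequality of the shape $(\eps/\lambda)^{p}\,\lambda^{q} \lesssim 1$, where $q > 0$ (from $\lambda < 1$ and the $\eta$‑loss) and where either $p \leq 0$ while $\eps/\lambda \geq 1$, or $\eps/\lambda$ is bounded above by a constant (on the boundary part). The sign conditions needed follow from $r_e \geq \gamma - \tfrac{\alpha m_2}{2}$ (equivalently $\delta := r_e - \gamma + \tfrac{\alpha m_2}{2} \geq 0$, from the definition of $r_e$), from $\alpha m_2 < |\fs|$ together with $m \geq 2$ (which gives $|\fs|(m-1) \geq \tfrac{m\alpha m_2}{2}$, used on the bulk part), from $\gamma \leq |\fs|/2$, and from $\eps,\lambda \in (0,1)$.

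The only genuinely delicate point — and the place where all the hypotheses are really used — is the boundary regime: the weight $|y|^{-\beta}$ fails to be locally integrable precisely when $\beta > |\fs|$, i.e. when $r_e > \gamma$, and it is exactly the cutoff $|y_i| \geq 2\lambda$ that controls the near‑origin blow‑up, producing the factor $\lambda^{\gamma-r_e}$ above. Everything else — the chain decomposition of $\cC_m$, the volume bookkeeping for the non‑lead points, and the juggling of powers of $\eps$ and $\lambda$ — is routine. The logarithmic borderline case $r_e = \gamma$ is why the small loss $\lambda^{-\eta}$ is needed, and it is here that $\alpha m_2 > 0$ must be invoked, to leave room for absorbing $\log(\eps/\lambda)$ into a small power of $\eps/\lambda$.
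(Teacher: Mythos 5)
Your proof is correct and follows essentially the same strategy as the paper's: decompose $\sS_{2n}^c$ via the partition formula \eqref{e:partition_cluster}, reduce to a per-cluster integral, and estimate that integral by distinguishing whether the cluster sits near the origin or away from it, with the $\eta$-loss absorbing the borderline logarithm. Your bulk/boundary split keyed on the lead point $|y_{i_1}|$ relative to $\eps$ is the same idea as the paper's split on $|y_1|$ versus $4nL\eps$ inside the regime $\lambda < 4nL\eps$; the paper's preliminary global case split $\lambda \gtrless 4nL\eps$ simply recognizes when the boundary regime is void, which your setup handles automatically. One small bookkeeping advantage of your version: because you state a per-cluster target that multiplies out exactly to the claimed bound (using only $\sum_{\set}|\set|=2n$), you avoid the paper's appeal to $|\pP|\leq n$ when recombining the $(\lambda/\eps)^{|\fs|}$ leftover factors in the $\lambda\geq 4nL\eps$ case — but the underlying power-counting is identical.
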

\begin{proof}
    By \eqref{e:partition_cluster}, we have
    \begin{equation*}
        \int_{\sS_{2n}^{c}} \prod_{i=1}^{2n} \frac{\boldsymbol{1}_{|y_i|\geq 2\lambda}}{|y_i|^{|\fs|-\gamma+r_e}} d\vec{y} \leq \sum_{\pP} \prod_{\set \in \pP} \int_{\cC_{|\set|}} \prod_{i \in \set} \frac{1}{|y_i|^{|\fs|-\gamma+r_e}} \boldsymbol{1}_{|y_i| \geq 2 \lambda} d \vec{y}_{\set}\;,
    \end{equation*}
    where $d \vec{y}_{\set} = \prod_{i \in \set} d y_i$, and each $\set$ in $\pP$ satisfies $|\set| \geq 2$. 
    
    If $\lambda \geq 4nL\eps$, then we have
    \begin{equation*}
        \begin{split}
        \int_{\cC_{|\set|}} \prod_{i \in \set} \frac{\boldsymbol{1}_{|y_i| \geq 2 \lambda}}{|y_i|^{|\fs|-\gamma+r_e}}  d \vec{y}_{\set} &\lesssim \eps^{(|\set|-1)|\fs|} \int_{|y| > \lambda} \frac{1}{|y|^{|\set| (|\fs|-\gamma+r_e)}} dy\\
        &\lesssim \big(\eps^{|\fs|} \lambda^{\gamma - r_e - |\fs| - \eta} \big)^{|\set|} \; (\lambda / \eps)^{|\fs|}\;,
        \end{split}
    \end{equation*}
    where the first bound above follows from the definition of $\cC_{\set}$ and the second bound holds since $2(|\fs|-\gamma+r_e) \geq |\fs|$ and $|\set| \geq 2$. 

    Since $\sum_{\set \in \pP} |\set| = 2n$ and $|\pP| \leq n$, multiplying the above bound over $\set \in \pP$ and summing over all partitions $\pP$ with group size at least two gives
    \begin{equation*}
        \int_{\sS_{2n}^{c}} \prod_{i=1}^{2n} \frac{\boldsymbol{1}_{|y_i|\geq 2\lambda}}{|y_i|^{|\fs|-\gamma+r_e}} d\vec{y} \lesssim \eps^{n|\fs|} \lambda^{2n(\gamma - r_e - \frac{|\fs|}{2} - \eta)}\;.
    \end{equation*}
    Now we turn to the case $\lambda < 4 n L \eps$. Take arbitrary $\set$ with $|\set| \geq 2$. We assume without loss of generality that $1 \in \set$. For integration of $\vec{y}_{\set}$ over $\cC_{|\set|}$, we separate the two domains $\{|y_1| > 4 n L \eps\}$ and $\{|y_1| \leq 4nL\eps\}$. For the first domain, we have
    \begin{equation*}
        \int_{\cC_{|\set|} \cap \{|y_1| > 4nL\eps\}} \prod_{i \in \set} \frac{\boldsymbol{1}_{|y_i| \geq 2 \lambda}}{|y_i|^{|\fs|-\gamma+r_e}} d \vec{y}_{\set} \leq \eps^{(|\set|-1)|\fs|} \int_{|y| > nL\eps} \frac{1}{|y|^{|\set|(|\fs|-\gamma+r_e)}} d y \lesssim \eps^{|\set|(\gamma-r_e-\eta)}\;.
    \end{equation*}
    As for the second domain, we have
    \begin{equation*}
        \int_{\cC_{|\set|} \cap \{|y_1| \leq 4nL\eps\}} \prod_{i \in \set} \frac{\boldsymbol{1}_{|y_i| \geq 2 \lambda}}{|y_i|^{|\fs|-\gamma+r_e}} d \vec{y}_{\set} \lesssim \prod_{i \in \set} \int_{\lambda \leq |y| \leq 6nL\eps} \frac{1}{|y_i|^{|\fs|-\gamma+r_e}} d y_i\;,
    \end{equation*}
    which is bounded by $\eps^{|\set|(\gamma-r_e)}$ if $\gamma > r_e$, and bounded by $\lambda^{|\set|(\gamma-r_e-\eta)}$ if $\gamma \leq r_e$. Since $\lambda \lesssim \eps$, we see the left hand side of \eqref{e:region_smallness_int1} is bounded by $\eps^{2n(\gamma-r_e-\eta)}$ if $\gamma > r_e$, and by $\lambda^{2n(\gamma-r_e-\eta)}$ if $\gamma \leq r_e$. Combining these bounds with the previous one with $\lambda \geq 4nL\eps$, and using the relation $\gamma-r_e \leq \frac{\alpha m_2}{2} < \frac{|\fs|}{2}$, we conclude \eqref{e:region_smallness_int1}. 
\end{proof}

\begin{lem}
\label{le:region_smallness_int2}
    Suppose $r_e \geq 1$. Then for arbitrarily small $\eta>0$, we have
    \begin{equation*}
        \int_{\sS_{2n}^{c}} \prod_{i=1}^{2n} \frac{\boldsymbol{1}_{\{|y_i| \leq 2 \lambda \}}}{|y_i|^{|\fs|-\gamma + r_e - 1}}  d \vec{y} \; \lesssim (\eps \wedge \lambda)^{2n(\gamma-r_e+1-\eta)}\;,
	\end{equation*}
	where the proportionality constant is independent of $\eps$ and $\lambda$.
\end{lem}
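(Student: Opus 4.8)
The plan is to mirror the proof of Lemma~\ref{le:region_smallness_int1}, the only genuine changes being that the exponent in the denominator is now $\beta := |\fs|-\gamma+r_e-1$ and that the target smallness is measured in $\eps\wedge\lambda$. First I record the arithmetic facts needed. Since $r_e\geq 1$ we have $r_e-1<\gamma-\tfrac{\alpha m_2}{2}<\gamma<|\fs|$, hence $0<\beta<|\fs|$, and $|\fs|-\beta=\gamma-r_e+1>\tfrac{\alpha m_2}{2}>0$, so the exponent $\gamma-r_e+1$ in the statement is genuinely positive. The crucial inequality is
\begin{equation*}
    2\beta = 2(|\fs|-\gamma+r_e-1) \geq |\fs|\;,
\end{equation*}
which is equivalent to $\gamma-r_e+1\leq\tfrac{|\fs|}{2}$ and follows from $r_e\geq 1$ together with the assumption $\gamma\leq\tfrac{|\fs|}{2}$; this is exactly where that assumption enters. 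Using the decomposition \eqref{e:partition_cluster} of $\sS_{2n}^{c}$ into products of clusters, it then suffices to bound $\int_{\cC_{m}}\prod_{i\in\set}\boldsymbol{1}_{\{|y_i|\leq 2\lambda\}}\,|y_i|^{-\beta}\,d\vec{y}_{\set}$ for each group $\set$ of size $m=|\set|\geq 2$, to multiply the resulting bounds over $\set\in\pP$ (using $\sum_{\set}|\set|=2n$ and $|\pP|\leq n$), and to sum over the finitely many admissible partitions.

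\textbf{Case $\lambda\geq 4nL\eps$.} Here $\eps\wedge\lambda=\eps$. All points of $\cC_m$ lie within $mL\eps$ of a fixed representative $y_1$, so I split according to whether $|y_1|>2mL\eps$ or $|y_1|\leq 2mL\eps$. In the first sub-region $|y_i|\sim|y_1|$ for every $i\in\set$, hence $\prod_{i\in\set}|y_i|^{-\beta}\lesssim|y_1|^{-m\beta}$; integrating the remaining $m-1$ variables over a ball of radius $mL\eps$ gives a factor $\eps^{(m-1)|\fs|}$, and since $m\beta\geq 2\beta\geq|\fs|$ the integral of $|y_1|^{-m\beta}$ over $2mL\eps<|y_1|\leq 2\lambda$ is controlled by its lower endpoint, i.e. $\lesssim\eps^{|\fs|-m\beta-\eta}$ (the $\eta$ is needed only to swallow a logarithm in the borderline instance $m=2$, $2\beta=|\fs|$). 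This yields $\lesssim\eps^{m(|\fs|-\beta)-\eta}$. In the second sub-region every $|y_i|\lesssim\eps$, and since $m\leq 2n$ the cutoff $|y_i|\leq 2\lambda$ is automatically satisfied, so the integral is $\lesssim\prod_{i\in\set}\int_{|y|\lesssim\eps}|y|^{-\beta}\,dy\lesssim\eps^{m(|\fs|-\beta)}$ by $\beta<|\fs|$. Either way the cluster integral is $\lesssim\eps^{m(\gamma-r_e+1)-\eta}$; multiplying over $\pP$, summing over partitions, and renaming $\eta$ gives $\lesssim\eps^{2n(\gamma-r_e+1-\eta)}=(\eps\wedge\lambda)^{2n(\gamma-r_e+1-\eta)}$.

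\textbf{Case $\lambda<4nL\eps$.} Now I discard the clustering constraint and use only the cutoff: for each group,
\begin{equation*}
    \int_{\cC_{m}}\prod_{i\in\set}\frac{\boldsymbol{1}_{\{|y_i|\leq 2\lambda\}}}{|y_i|^{\beta}}\,d\vec{y}_{\set} \leq \prod_{i\in\set}\int_{|y|\leq 2\lambda}\frac{dy}{|y|^{\beta}} \lesssim \lambda^{m(|\fs|-\beta)}\;,
\end{equation*}
again using $\beta<|\fs|$. Taking the product over $\pP$ gives $\lesssim\lambda^{2n(\gamma-r_e+1)}$. If $\lambda<\eps$ then $\eps\wedge\lambda=\lambda$ and $\lambda^{2n(\gamma-r_e+1)}\leq\lambda^{2n(\gamma-r_e+1-\eta)}$ since $\lambda<1$; if $\eps\leq\lambda<4nL\eps$ then $\lambda$ and $\eps$ are comparable up to an $n$-dependent constant absorbed by $\lesssim$, so $\lambda^{2n(\gamma-r_e+1)}\lesssim\eps^{2n(\gamma-r_e+1)}\leq\eps^{2n(\gamma-r_e+1-\eta)}=(\eps\wedge\lambda)^{2n(\gamma-r_e+1-\eta)}$.

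The step I expect to require the most care is the endpoint bookkeeping in the first case: one must verify $2\beta\geq|\fs|$ so that the $|y_1|$-integral over the range $(\eps,\lambda)$ is dominated by its $\eps$-endpoint, which is what ultimately produces the correct power of $\eps$ rather than of $\lambda$, and one must treat the single borderline value $m=2$, $2\beta=|\fs|$ where that integral produces $\log(\lambda/\eps)$, absorbing it into the arbitrarily small $\eta$. Everything else is routine and parallels Lemma~\ref{le:region_smallness_int1}.
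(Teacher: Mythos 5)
Your proof is correct and takes essentially the same approach as the paper: split on whether $\lambda$ or $\eps$ is the smaller scale (up to the constant $4nL$), use the partition into clusters when $\lambda\gtrsim\eps$, and within each cluster split on the size of a representative point. The only differences are cosmetic (you take $2mL\eps$ rather than $4nL\eps$ as the in-cluster threshold, and you spell out the conversion from $\lambda^{\cdots}$ to $(\eps\wedge\lambda)^{\cdots}$ in the regime $\eps\leq\lambda\lesssim\eps$, which the paper leaves implicit).
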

\begin{proof}
    If $\lambda \leq 4nL \eps$, then the quantity is bounded by
    \begin{equation*}
        \prod_{i=1}^{2n} \int_{|y_i| \leq 2\lambda} \frac{1}{|y_i|^{|\fs|-\gamma+r_e - 1}} d y_i \; \lesssim \lambda^{2n(\gamma- r_e +1)}, 
    \end{equation*}
    where the second inequality holds since $\gamma - r_e + 1 > \frac{\alpha m_2}{2}$ and hence is positive. 

    Now we consider the case $\lambda > 4nL \eps$. Same as before, we have
    \begin{equation*}
        \int_{\sS_{2n}^{c}} \prod_{i=1}^{2n} \frac{\boldsymbol{1}_{|y_i| \leq 2\lambda}}{|y_i|^{|\fs|-\gamma+r_e-1}} d\vec{y} \leq \sum_{\pP} \prod_{\set \in \pP} \int_{\cC_{|\set|}} \prod_{i \in \set} \frac{\boldsymbol{1}_{|y_i| \leq 2 \lambda}}{|y_i|^{|\fs|-\gamma+r_e-1}}  d \vec{y}_{\set}\;.
    \end{equation*}
    Take arbitrary $\pP$ and $\set \in \pP$, and assume without loss of generality that $y_1 \in \set$. We decompose the domain of integration $\cC_{|\set|}$ as
    \begin{equation*}
        \cC_{|\set|} = \big( \cC_{|\set|} \cap \{|y_1| \leq 4nL\eps\} \big) \cup \big( \cC_{|\set|} \cap \{|y_1| > 4nL\eps\} \big)\;.
    \end{equation*}
    For the former, we have
    \begin{equation*}
        \int_{\cC_{|\set|} \cap \{|y_1| \leq 4nL\eps\}} \prod_{i \in \set} \frac{\boldsymbol{1}_{|y_i| \leq 2 \lambda}}{|y_i|^{|\fs|-\gamma+r_e-1}}  d \vec{y}_{\set} \lesssim \prod_{i \in \set} \int_{|y_i| \leq 6nL\eps} \frac{1}{|y_i|^{|\fs|-\gamma+r_e-1}} d y_i \lesssim \eps^{|\set|(\gamma-r_e+1)}\;.
    \end{equation*}
    For the latter, we have
    \begin{equation*}
        \int_{\cC_{|\set|} \cap \{|y_1|>4nL\eps\}} \prod_{i \in \set} \frac{\boldsymbol{1}_{|y_i| \leq 2 \lambda}}{|y_i|^{|\fs|-\gamma+r_e-1}}  d \vec{y}_{\set} \lesssim \eps^{(|\set|-1)|\fs|} \int_{|y|>nL\eps} \frac{1}{|y|^{|\set|(|\fs|-\gamma+r_e-1)}} dy \lesssim \eps^{|\set|(\gamma-r_e+1-\eta)}\;, 
    \end{equation*}
    where we have used the definition of $\cC_{|\set|}$ and that $2(|\fs|-\gamma+r_e-1) \geq |\fs|$. This gives
    \begin{equation*}
        \int_{\sS_{2n}^{c}} \prod_{i=1}^{2n} \frac{\boldsymbol{1}_{\{|y_i| \leq 2 \lambda \}}}{|y_i|^{|\fs|-\gamma + r_e - 1}}  d \vec{y} \lesssim \eps^{2n(\gamma-r_e+1-\eta)}
    \end{equation*}
    if $\lambda > 4nL\eps$. This concludes the proof. 
\end{proof}

Now we have all the ingredients to prove Theorem~\ref{thm:main_result}. 

\begin{proof} [Proof of Theorem~\ref{thm:main_result}]
If $|\theta_{\fx}| > 100 n (1 + \Lambda^2) |\theta_{\fy}|$, then we have
\begin{equation*}
    \begin{split}
    &\phantom{111}\bigg( \int_{\sS_{2n}^{c}} \prod_{i=1}^{2n} |\varphi^{\lambda}(x_i)| \prod_{i=1}^{2n} \Big\| \int |K(x_i, y)| Y^{\diamond m_2} dy \Big\|_{2} d \vec{x} \bigg)^{\frac{1}{2n}}\\
    &\lesssim \lambda^{-|\fs|} \cdot \eps^{\gamma \wedge \frac{\alpha m_2}{2} - \eta} \lambda^{\gamma - (\gamma \wedge \frac{\alpha m_2}{2} + \eta)} \cdot \big| \sS_{2n}^{c} \cap \{|\vec{x}| \leq \lambda \} \big|\\
    &\lesssim \eps^{\frac{\alpha(m_1 + m_2)}{2} - \eta} \lambda^{\gamma - \frac{\alpha(m_1 + m_2)}{2} - \eta}\;,
    \end{split}
\end{equation*}
where the first inequality follows from Proposition~\ref{pr:nelsontype_bound_x} and the second one follows from \eqref{e:no_singleton_volume} and a direct computation of the exponents and relative sizes of $\eps$ and $\lambda$. 

If $|\theta_{\fy}| > 100 n (1 + \Lambda^2) |\theta_{\fx}|$, then we need to bound
\begin{equation*}
    \bigg( \int_{\sS_{2n}^{c}} \prod_{i=1}^{2n} \Big( \hH(y_i) \boldsymbol{1}_{|y_i| > 2\lambda} \Big) d \vec{y}  \bigg)^{\frac{1}{2n}} \quad \text{and} \quad \bigg( \int_{\sS_{2n}^{c}} \prod_{i=1}^{2n} \Big( \hH(y_i) \boldsymbol{1}_{|y_i| \leq 2\lambda} \Big) d \vec{y} \bigg)^{\frac{1}{2n}}\;.
\end{equation*}
We can bound the first quantity by $\eps^{\frac{\alpha(m_1 + m_2)}{2}} \lambda^{\gamma - \frac{\alpha(m_1 + m_2)}{2} - \eta}$ by Proposition~\ref{pr:nelsontype_bound_y} and Lemma~\ref{le:region_smallness_int1}. The second quantity can be controlled in the same way by Proposition~\ref{pr:nelsontype_bound_y} and Lemma~\ref{le:region_smallness_int2}. The proof of Theorem~\ref{thm:main_result} is complete. 
\end{proof}

\section{Application to weak universality problems}
\label{sec:spde}

In this section, we prove Theorems~\ref{thm:kpz} and~\ref{thm:phi43} with $F \in \cC^{2,\beta}_{\pP}$ and $G \in \cC^{3,\beta}_{\pP}$ respectively with the growth power $M$ as in Definition~\ref{def:nonlinearity_space}. With the theory of regularity structures, both theorems follow from two ingredients: well-posedness and convergence of the abstract equation, which is the deterministic part, and convergence of certain stochastic objects, which is the probabilistic part. 

For the KPZ equation, it was shown in \cite[Theorem 3.7]{HX19} that the abstract equation for \eqref{e:kpz_macro} is well-posed if $F \in \cC^{4,\beta}_{\pP}$, but very slight modification of the arguments there will reduce the requirement to $F \in \cC^{2,\beta}_{\pP}$. Exactly the same argument could be used to show that the abstract equation for \eqref{e:phi43_macro} is well-posed if $G \in \cC^{3,\beta}_{\pP}$. We summarise in the following theorem. 

\begin{thm} \label{thm:abstract_eq}
    If $F \in \cC^{2,\beta}_{\pP}$, then the abstract equation for \eqref{e:kpz_macro} in regularity structures is well-posed. 
    Similarly, if $G \in \cC^{3,\beta}_{\pP}$, then the abstract equation for \eqref{e:phi43_macro} in regularity structures is well-posed. 
\end{thm}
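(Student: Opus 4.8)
The plan is to derive both well-posedness statements from the abstract fixed-point theorem of \cite{HairerKPZ,rs_theory}, treating them as minor variants of \cite[Theorem~3.7]{HX19} and its analogue in \cite{Phi4_poly}. Note first that this is the \emph{deterministic} half of the solution theory: at a fixed $\eps$ the relevant model is simply the canonical (renormalised) lift of the smooth noise $\xi_\eps$, which exists with no new input, so Theorem~\ref{thm:main_result} plays no role here. Concretely, for KPZ one solves at fixed $\eps$ the abstract equation
\begin{equation*}
    H = \pP\,\boldsymbol{1}_{+}\big(\eps^{-1}\widehat{F}_\eps(\sqrt{\eps}\,\d_x H)\big) + (\text{lift of } h_\eps(0,\cdot))
\end{equation*}
in a space $\dD^{\gamma,\eta}$ of modelled distributions singular on $\{t=0\}$, where $\pP$ is abstract heat-kernel convolution, $\boldsymbol{1}_{+}$ restricts to positive times, and $\widehat{F}_\eps$ is the lift of the nonlinearity. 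The underlying regularity structure, the model, the reconstruction theorem, the Schauder and multiplication estimates, and the treatment of the $t=0$ singularity via weighted spaces are all exactly as in \cite{HX19,HQ} and do not involve the regularity of $F$; the \emph{only} ingredient that does is the construction of $\widehat{F}_\eps$ as a locally Lipschitz map $\dD^{\gamma,\eta}\to\dD^{\bar\gamma,\bar\eta}$, together with the verification that $(\gamma,\eta,\bar\gamma,\bar\eta)$ can be chosen so that $\pP\boldsymbol{1}_{+}$ maps $\dD^{\bar\gamma,\bar\eta}$ contractively back into $\dD^{\gamma,\eta}$.

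The key point is that $\gamma$ may be taken just above $3/2$, so that $\d_x H$ is function-like of regularity $\gamma-1\in(1/2,2/3)$. Writing $\sqrt{\eps}\,\d_x H=\sqrt{\eps}\Psi_\eps+\sqrt{\eps}\,\d_x U$ with $U$ the function-like part, and Taylor expanding $F$ around $\sqrt{\eps}\Psi_\eps$ to second order exactly as in the heuristic following \eqref{e:21_1}, a scaling/homogeneity bookkeeping as in \cite[Appendix]{HQ} shows that $\widehat{F}_\eps(\sqrt{\eps}\,\d_x H)$ only uses the abstract symbols attached to the (suitably renormalised) objects $\eps^{-1}F(\sqrt{\eps}\Psi_\eps)$, $\eps^{-1/2}F'(\sqrt{\eps}\Psi_\eps)$ and $F''(\sqrt{\eps}\Psi_\eps)$: the coefficient $\eps^{-1+j/2}F^{(j)}(\sqrt{\eps}\Psi_\eps)$ for $j\ge 3$ carries a positive power of $\eps$ and its homogeneity lies above the truncation level $\bar\gamma$, while the Taylor remainder
\begin{equation*}
    \eps^{-1}\Big(F(\sqrt{\eps}\Psi_\eps+\sqrt{\eps}\,\d_x U)-\sum_{j=0}^{2}\frac{1}{j!}F^{(j)}(\sqrt{\eps}\Psi_\eps)\,(\sqrt{\eps}\,\d_x U)^{j}\Big)\ \lesssim\ \eps^{\beta/2}\,|\d_x U|^{2+\beta}
\end{equation*}
is controlled by the modulus of continuity of $F''$, i.e. is a modelled distribution of homogeneity $(2+\beta)(\gamma-1)>1$ carrying the small prefactor $\eps^{\beta/2}$, hence harmless in $\dD^{\bar\gamma,\bar\eta}$. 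Thus the only composition estimate needed is the analogue of \cite[Theorem~4.16]{rs_theory} for $\cC_{\pP}^{k,\beta}$ functions composed with a function-like modelled distribution, applied with $k=2$ and Hölder exponent $\beta$ --- which is exactly $F\in\cC_{\pP}^{2,\beta}$, the polynomial growth in Definition~\ref{def:nonlinearity_space} being what accommodates the unboundedness of $\Psi_\eps$ and its Wick powers. One then checks that this lift is locally Lipschitz and that the remaining index inequalities close with $\gamma$ slightly above $3/2$ and $\eta$ as in Theorem~\ref{thm:kpz}, runs the Banach fixed point on a short time interval, glues local solutions, and reads off uniqueness; all of this is standard once the composition bound is in place.

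For the dynamical $\Phi^4_3$ case the plan is identical with KPZ replaced throughout: one writes $\Phi_\eps=\Psi_\eps+U_\eps$ with $U_\eps$ a function-like modelled distribution of positive regularity, Taylor expands $G$ around $\sqrt{\eps}\Psi_\eps$ to third order as in \eqref{e:phi43_general}, and a homogeneity count shows that the only symbols surviving are those attached to $\eps^{-3/2+j/2}G^{(j)}(\sqrt{\eps}\Psi_\eps)$ for $j\le 3$ (the term $j=4$ carrying a positive power of $\eps$ and lying above the truncation), while the Taylor remainder $\eps^{-3/2}\big(G(\sqrt{\eps}\Psi_\eps+\sqrt{\eps}U_\eps)-\sum_{j\le 3}\frac{1}{j!}G^{(j)}(\sqrt{\eps}\Psi_\eps)(\sqrt{\eps}U_\eps)^{j}\big)\lesssim\eps^{\beta/2}|U_\eps|^{3+\beta}$ is controlled by the modulus of continuity of $G^{(3)}$. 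Hence the composition estimate is needed only with $k=3$, which is $G\in\cC_{\pP}^{3,\beta}$, and the rest of the argument --- Schauder with $\gamma$ slightly above $1$, the weighted spaces near $t=0$, the fixed point --- is inherited verbatim from \cite{Phi4_poly}.

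The main obstacle is precisely the composition/Lipschitz estimate at this sharpened regularity: one must verify carefully that the additional derivatives assumed in \cite[Theorem~3.7]{HX19} (resp.\ in \cite{Phi4_poly}) were used only to afford a more comfortable value of $\gamma$ and never in an essential way, and that the full system of index inequalities --- including the exponents governing the $t=0$ singularity in the weighted $\dD^{\gamma,\eta}$ spaces and the homogeneity of the Taylor remainder --- remains simultaneously solvable when $\gamma$ is pushed down to its minimal value $3/2^{+}$ (resp.\ $1^{+}$). This is bookkeeping rather than a new idea, which is why the required modification is ``very slight''; every other ingredient (reconstruction, Schauder, multiplication, and the bounds on the model) carries over unchanged.
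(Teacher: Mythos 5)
Your outline matches the paper's strategy at a high level: reuse the deterministic fixed-point machinery from \cite[Theorem~3.7]{HX19} (resp. its $\Phi^4_3$ analogue) and observe that the only ingredient touched by the regularity of $F$ (resp.\ $G$) is the composition/local Lipschitz estimate for the lift of the nonlinearity, with the Taylor remainder of order $k$ controlled by the $\beta$-H\"older modulus of $F^{(k)}$. That is indeed the route the paper takes.

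However, there is a concrete gap, and a mild misdiagnosis. First the misdiagnosis: the extra two derivatives assumed in \cite[Theorem~3.7]{HX19} were \emph{not} used to afford a more comfortable $\gamma$ or to close index inequalities. The admissible ranges $\gamma\in(3/2,5/3)$ (KPZ) and $\gamma\in(1,6/5)$ ($\Phi^4_3$) are the same here as there. The extra derivatives entered only in two intermediate-value-theorem bounds inside the local Lipschitz estimate for the nonlinearity, and that is the single thing that must be reworked. Second, the gap: your proposal only treats the first of those two bounds. Writing
\begin{equation*}
R(x,y) := F(x+y) - F(x) - F'(x)y - \tfrac12 F''(x)y^2\;,
\end{equation*}
one needs both
\begin{equation*}
|R(x,y)| \lesssim (1+|x|+|y|)^M |y|^{2+\beta}
\quad\text{and}\quad
|R(x,y)-R(x,z)| \lesssim (1+|x|+|y|+|z|)^{M+1+\beta}|y-z|\;.
\end{equation*}
You give (a version of) the first bound. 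But the \emph{difference} bound is exactly what makes the lifted nonlinearity locally Lipschitz on $\dD^{\gamma,\eta}$ --- and it is precisely where $F^{(3)}$ and $F^{(4)}$ were invoked in \cite{HX19}. The paper's key observation is that both bounds follow from the integral representations
\begin{equation*}
R(x,y) = \iint_{x<u<v<x+y}\big(F''(u)-F''(x)\big)\,du\,dv\;,\qquad
R(x,y)-R(x,z) = \int_{x+z}^{x+y}\!\Big(\int_x^v\big(F''(u)-F''(x)\big)\,du\Big)dv\;,
\end{equation*}
so that only the $\beta$-H\"older modulus of $F''$ is used. Saying ``one then checks that the lift is locally Lipschitz'' elides this step, which is the actual content of the theorem; without the second representation, your argument does not close.
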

\begin{proof}
    \cite[Theorem 3.7]{HX19} gives the well-posedness of the abstract equation for \eqref{e:kpz_macro} under $F \in \cC^{2,\beta}_{\pP}$. The only places where the third and fourth derivatives are used is an intermediate value theorem for the quantity
    \begin{equation*}
        R(x,y) := F(x+y) - F(x) - F'(x) y - \frac{1}{2} F''(x) y^2
    \end{equation*}
    and the difference $R(x,y) - R(x,z)$. One needs to show the bounds
    \begin{equation*}
    \begin{split}
        |R(x,y)| &\lesssim (1 + |x| + |y|)^{M} |y|^{2+\beta}\;,\\
        |R(x,y) - R(x,z)| &\lesssim (1 + |x| + |y| + |z|)^{M+1+\beta} |y-z|\;.
    \end{split}
    \end{equation*}
    Instead of using the intermediate value theorem with $F^{(3)}$, the first bound follows from
    \begin{equation*}
        R(x,y) = \iint\limits_{x<u<v<x+y} \big( F''(u) - F''(x) \big) du dv
    \end{equation*}
    and H\"older continuity of $F''$. Similarly, instead of using the intermediate value theorem with four derivatives of $F$, one has
    \begin{equation*}
        R(x,y) - R(x,z) = \int_{x+z}^{x+y} \bigg( \int_{x}^{v} \big( F''(u) - F''(x) \big) du \bigg) dv\;.
    \end{equation*}
    One then has the second bound with only H\"older continuity of $F''$. This gives the well-posedness of the abstract equation with $F \in \cC^{2,\beta}_{\pP}$ for the KPZ case. The same argument also gives the well-posedness of the abstract equation with $G \in \cC^{3,\beta}_{\pP}$ for the dynamical $\Phi^4_3$ case. 
\end{proof}

In the rest of the section, we will show the convergence of the corresponding stochastic objects in KPZ and $\Phi^4_3$ under $F \in \cC^{2,\beta}_{\pP}$ and $G \in \cC^{3,\beta}_{\pP}$ respectively. These convergences, together with Theorem~\ref{thm:abstract_eq}, complete the proofs of Theorem~\ref{thm:kpz} and~\ref{thm:phi43}.

\subsection{Preliminaries}

For $N \in \N$ and $\Btheta = (\theta_1, \dots, \theta_N) \in\R^{N}$, let $\fR_{\Btheta}$ be the cube with side length $2$ centred at $\Btheta$ and let $\fR_{\theta_i}$ be the interval with length $2$ centred at $\theta_i$. Following \cite[Section 4.3]{HX19}, for every integer $M$, open set $\Omega\subset\R^{N}$ and distribution $\Upsilon$ on $\R^{N}$, we define the norm
\begin{equation*}
    \| \Upsilon \|_{M,\Omega} := \sup_{\phi: \|\phi\|_{\bB_{M}(\Omega)} \leq 1} |\scal{\Upsilon,\phi}|,
\end{equation*}
where the norm $\|\cdot\|_{\bB_{M}(\Omega)}$ on $\cC_{c}^{\infty}(\Omega)$ is defined by
\begin{equation*}
    \|\phi\|_{\bB_{M}(\Omega)} := \sup_{\r : |\r|_{\infty} \leq M} \sup_{x\in\Omega} |\d^{\r}\phi(x)|.
\end{equation*}
The following lemmas from \cite[Section 4.3]{HX19} are needed in the rest of this section.

\begin{lem}
    \label{lem:local_decompose}
    For every distribution $\Upsilon$ on $\R^N$ and $\Phi\in\cC^{\infty}(\R^N)$, we have
    \begin{equation*}
        |\scal{\Upsilon,\Phi}| \lesssim_{M} \sum_{\K\in\Z^N} \| \Upsilon \|_{M+2,\fR_{\K}} \sup_{\r:|\r|_{\infty} \leq M+2} \sup_{\Btheta\in\fR_{\K}} |\d^{\r}_{\Btheta} \Phi(\Btheta)|\;.
    \end{equation*}
\end{lem}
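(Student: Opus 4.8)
The plan is a routine partition-of-unity localisation. First I would fix, once and for all, a single bump function $\chi \in \cC_c^\infty(\R^N)$ with $\supp\chi$ contained in the open cube of side length $2$ centred at the origin and with $\sum_{\K \in \Z^N} \chi(\cdot - \K) \equiv 1$ on $\R^N$. Such a $\chi$ exists because the translates by $\Z^N$ of that cube cover $\R^N$ with bounded overlap (each point lies in exactly $2^N$ of them): take $\psi \in \cC_c^\infty$, $\psi \geq 0$, supported in the cube, with $\psi \equiv 1$ on the concentric cube of side $1$; then $S := \sum_\K \psi(\cdot - \K)$ is smooth, $\Z^N$-periodic and $\geq 1$, so $\chi := \psi / S$ works. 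Setting $\chi_\K := \chi(\cdot - \K)$, we have $\chi_\K \in \cC_c^\infty(\fR_\K)$, $\sum_\K \chi_\K \equiv 1$, and---crucially---$\|\chi_\K\|_{\bB_{M+2}(\fR_\K)} = \|\chi\|_{\bB_{M+2}}$ is a constant independent of $\K$, by translation invariance.

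Next I would write $\Phi = \sum_{\K} \chi_\K \Phi$ and correspondingly $\scal{\Upsilon,\Phi} = \sum_{\K} \scal{\Upsilon, \chi_\K \Phi}$, noting that each $\chi_\K \Phi$ lies in $\cC_c^\infty(\fR_\K)$. For a single term, the definition of $\|\cdot\|_{M+2,\fR_\K}$ gives
\[
    |\scal{\Upsilon, \chi_\K \Phi}| \leq \|\Upsilon\|_{M+2,\fR_\K} \, \|\chi_\K \Phi\|_{\bB_{M+2}(\fR_\K)},
\]
and the Leibniz rule yields $\|\chi_\K \Phi\|_{\bB_{M+2}(\fR_\K)} \lesssim_{M} \|\chi\|_{\bB_{M+2}} \sup_{|\r|_\infty \leq M+2} \sup_{\Btheta \in \fR_\K} |\d_\Btheta^{\r} \Phi(\Btheta)|$, the implicit constant being purely combinatorial (binomial coefficients, the number of multi-indices of size $\leq M+2$) and hence depending only on $M$ and $N$. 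Summing over $\K \in \Z^N$ reproduces exactly the claimed bound.

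The one point genuinely needing care---and what I regard as the only real issue in the argument---is the justification of the interchange $\scal{\Upsilon,\Phi} = \sum_{\K} \scal{\Upsilon, \chi_\K \Phi}$, since the sum is infinite and $\Phi$ need not be compactly supported. If the right-hand side of the lemma is infinite there is nothing to prove, so one may assume it finite; the bound just derived then shows the series $\sum_\K \scal{\Upsilon, \chi_\K \Phi}$ converges absolutely. Identifying its limit with $\scal{\Upsilon,\Phi}$ uses that $\Upsilon$ has finite order on compact sets together with the decay encoded in the finiteness of the right-hand side, so that the tails $\sum_{|\K|>R}\chi_\K\Phi$ tend to $0$ in the relevant test-function topology; in the applications $\Phi$ decays fast and $\|\Upsilon\|_{M+2,\fR_\K}$ grows at most polynomially in $\K$, which makes this immediate. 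I would dispatch this with a one-line remark rather than belabour it, the remainder being bookkeeping with the Leibniz rule and the translation-invariance of the $\chi_\K$.
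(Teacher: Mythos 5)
Your proposal is correct, and it is essentially the proof the paper intends: the paper simply cites \cite[Proposition~4.10]{HX19}, which is exactly this partition-of-unity localisation combined with the Leibniz rule and the translation-invariance of $\|\chi_{\K}\|_{\bB_{M+2}}$. Your flagging of the interchange of the infinite sum with the pairing as the only genuinely delicate point (to be justified via absolute convergence plus the decay available in applications) is also the right call and matches the level of detail of the cited reference.
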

\begin{proof}
    Same as \cite[Proposition~4.10]{HX19}.
\end{proof}

\begin{lem}
\label{lem:local_bound}
    Suppose $F\in\cC^{2,\beta}_{\pP}$. Let $\l = (\l_1, \dots, \l_d) \in\N^{d}$ and
    \begin{equation*}
        \Upsilon = \otimes_{i=1}^{d} \widehat{F^{(\ell_{i})}},\qquad \Upsilon_{\delta} = \otimes_{i=1}^{d} \widehat{F_{\delta}^{(\ell_{i})}}.
    \end{equation*}
    For $\K\in\Z^{d}$ we have the bound
    \begin{equation*}
        \| \Upsilon_{\delta} \|_{M+2,\fR_{\K}} \lesssim \prod_{i=1}^{d} (1+|K_i|)^{-2-\beta+\ell_i}\;,
    \end{equation*}
    where the proportionality constant is independent of $\K$. For the difference $\Upsilon - \Upsilon_{\delta}$, we have the bound
    \begin{equation*}
        \| \Upsilon - \Upsilon_{\delta} \|_{M+2,\fR_{\K}} \lesssim \delta^{\omega} \prod_{i=1}^{d} (1+|K_i|)^{-2-\beta+\ell_i+\omega}
    \end{equation*}
    uniformly over $\K\in\Z^d$, $\delta\in(0,1)$ and $\omega\in(0,1)$.
\end{lem}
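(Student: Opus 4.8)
The plan is to reduce both bounds to one-dimensional estimates on $\|\widehat{F^{(\ell)}}\|_{M+2,\fR_K}$ with $K\in\Z$, exploiting the tensor-product structure of $\Upsilon$ and $\Upsilon_\delta$, and then to prove those estimates by rewriting the pairing as an oscillatory integral and integrating by parts. First I would record a tensorisation bound: for one-dimensional distributions $\Upsilon_1,\dots,\Upsilon_d$ of finite order,
\[
\Big\|\bigotimes_{i=1}^d\Upsilon_i\Big\|_{M+2,\,\fR_{K_1}\times\cdots\times\fR_{K_d}}\;\lesssim\;\prod_{i=1}^d\|\Upsilon_i\|_{M+2,\fR_{K_i}}\;,
\]
which follows by pairing off the variables one at a time: if $\phi\in\cC^\infty_c$ is supported in the product cube with $\|\phi\|_{\bB_{M+2}}\le1$, then $(x_2,\dots,x_d)\mapsto\langle\Upsilon_1,\phi(\cdot,x_2,\dots,x_d)\rangle$ is again smooth, supported in $\fR_{K_2}\times\cdots\times\fR_{K_d}$, and has $\bB_{M+2}$-norm at most $\|\Upsilon_1\|_{M+2,\fR_{K_1}}$; iterating gives the claim. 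Applying this with $\Upsilon_i=\widehat{F^{(\ell_i)}}$ (resp. $\widehat{F_\delta^{(\ell_i)}}$), and using $\widehat{F^{(\ell)}}=(i\theta)^\ell\widehat F$ together with the elementary bound $\|(i\theta)^\ell\phi\|_{\bB_{M+2}(\fR_K)}\lesssim(1+|K|)^\ell\|\phi\|_{\bB_{M+2}(\fR_K)}$ for $\phi$ supported in $\fR_K$, the problem reduces to proving
\[
\|\widehat F\|_{M+2,\fR_K}\lesssim(1+|K|)^{-2-\beta}\qquad\text{and}\qquad\|\widehat{F-F_\delta}\|_{M+2,\fR_K}\lesssim\delta^{\omega}(1+|K|)^{-2-\beta+\omega}
\]
uniformly in $K\in\Z$, $\delta\in(0,1)$, $\omega\in(0,1)$, and then multiplying through by $(1+|K_i|)^{\ell_i}$ in each coordinate.

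For the first estimate, fix $\phi\in\cC^\infty_c(\fR_K)$ with $\|\phi\|_{\bB_{M+2}}\le1$ and set $\psi:=\phi(K+\cdot)$, supported in $[-1,1]$ with derivatives up to order $M+2$ bounded by $1$. With the Fourier convention of the paper one has $\langle\widehat F,\phi\rangle=c\int F(x)\,\widehat\psi(x)\,e^{-iKx}\,dx$, where integrating by parts $M+2$ times in the $\psi$-integral gives $|\widehat\psi^{(k)}(x)|\lesssim(1+|x|)^{-M-2}$ for every $k$. Since $|F(x)|\lesssim(1+|x|)^{M}$ by Definition~\ref{def:nonlinearity_space}, all the integrals below converge absolutely, and for bounded $|K|$ this already yields $|\langle\widehat F,\phi\rangle|\lesssim1\lesssim(1+|K|)^{-2-\beta}$. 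For large $|K|$ I would integrate by parts twice, $e^{-iKx}=-K^{-2}\partial_x^2e^{-iKx}$, and expand $\partial_x^2(F\widehat\psi)=F''\widehat\psi+2F'\widehat\psi'+F\widehat\psi''$, the boundary terms vanishing by the decay of $\widehat\psi$. The terms carrying $\widehat\psi'$ and $\widehat\psi''$ are handled by one further integration by parts, using only $F\in\cC^2$ locally, and contribute $O(|K|^{-3})$, which is more than enough. The remaining term $\int F''(x)\widehat\psi(x)e^{-iKx}dx$ is treated by the half-period shift $x\mapsto x+\pi/K$: this reverses the sign of $e^{-iKx}$, so that twice the integral equals $\int\big([F''(x)-F''(x+\tfrac{\pi}{K})]\widehat\psi(x)+F''(x+\tfrac{\pi}{K})[\widehat\psi(x)-\widehat\psi(x+\tfrac{\pi}{K})]\big)e^{-iKx}\,dx$; the first bracket is $O(|K|^{-\beta})$ times a polynomial weight by the $\beta$-Hölder bound on $F^{(2)}$ in Definition~\ref{def:nonlinearity_space}, the second is $O(|K|^{-1})$ times a polynomial weight by the mean value theorem for $\widehat\psi$, and integrating against $(1+|x|)^{-M-2}$ gives $O(|K|^{-\beta})$ for this term and hence $O(|K|^{-2-\beta})$ in all. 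The identical computation applies to $F_\delta=F*\varrho_\delta$ with constants independent of $\delta$, since convolution with a fixed mollifier enlarges neither the growth exponent nor the Hölder constant.

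For the difference bound I would use the multiplier identity $\widehat{F_\delta}=\widehat F\cdot\mu_\delta$, where $\mu_\delta(\theta):=\int\varrho(u)e^{-i\delta\theta u}\,du$ is smooth with $\mu_\delta(0)=1$ and $|\partial_\theta^j\mu_\delta(\theta)|\lesssim\delta^j$ for all $j\ge1$. For $\phi$ supported in $\fR_K$ this gives $\langle\widehat{F-F_\delta},\phi\rangle=\langle\widehat F,(1-\mu_\delta)\phi\rangle$; since $|1-\mu_\delta(\theta)|\lesssim\min(1,\delta|\theta|)$ and $|\theta|\le1+|K|$ on $\fR_K$, a Leibniz expansion yields $\|(1-\mu_\delta)\phi\|_{\bB_{M+2}(\fR_K)}\lesssim\big(\delta(1+|K|)\big)^{\omega}\|\phi\|_{\bB_{M+2}(\fR_K)}$ for every $\omega\in(0,1)$, with constant uniform in $\omega$, $\delta$ and $K$. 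Combined with the first estimate this gives $\|\widehat{F-F_\delta}\|_{M+2,\fR_K}\lesssim\delta^{\omega}(1+|K|)^{-2-\beta+\omega}$. Feeding the two one-dimensional bounds back through the derivative reduction and the tensorisation bound — using for the difference the telescoping identity $\bigotimes_iA_i-\bigotimes_iB_i=\sum_j(\bigotimes_{i<j}A_i)\otimes(A_j-B_j)\otimes(\bigotimes_{i>j}B_i)$ and then absorbing each extra factor $(1+|K_j|)^{\omega}$ into $\prod_i(1+|K_i|)^{\omega}$ — completes both parts of the lemma.

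The main obstacle is the core one-dimensional estimate: extracting the full decay $(1+|K|)^{-2-\beta}$ rather than merely $(1+|K|)^{-2}$, which is precisely where the $\beta$-Hölder regularity of $F^{(2)}$ — and nothing stronger — enters, through the half-period shift. The attendant bookkeeping (legitimacy of the two integrations by parts, vanishing of all boundary terms, absolute convergence of the weighted integrals, and control of the polynomial weights under the substitution $x\mapsto x+\pi/K$) is routine; note that the two orders of decay lost from $\widehat\psi$ against the growth of $F$ are exactly compensated by the $+2$ in the order $M+2$ of the test functions. Everything else — the tensorisation, the derivative reduction, and the multiplier argument for the difference — is soft.
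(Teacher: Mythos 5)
Your argument is correct. The paper's proof of this lemma is a one-line citation of \cite[Propositions~4.6 and~4.9]{HX19}, and your proof --- tensorisation of the $\|\cdot\|_{M+2,\fR_\K}$ norm, derivative reduction via $\widehat{F^{(\ell)}}=(i\theta)^\ell\widehat F$, twofold integration by parts in the oscillatory integral $\int F\widehat\psi\,e^{-iKx}\,dx$ with a half-period shift to convert the $\beta$-H\"older modulus of $F''$ into $|K|^{-\beta}$ decay, and the multiplier identity $\widehat{F_\delta}=\widehat F\cdot\mu_\delta$ with $\min(1,\delta|\theta|)\le(\delta|\theta|)^\omega$ for the difference --- reconstructs exactly the standard Fourier-decay argument underlying those cited propositions, so this is essentially the same approach.
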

\begin{proof}
    We only provide the proof of the first estimate, the second estimate is similar. By \cite[Lemma~4.5]{HX19}, we have
    \begin{equation*}
        \| \Upsilon_{\delta} \|_{M+2,\fR_{\K}} \lesssim \prod_{i=1}^{d} \|\widehat{F_\delta^{(\ell_i)}}\|_{M+2,\fR_{K_i}}\;.
    \end{equation*}
    Then the desired result follows from
    \begin{equation*}
        \|\widehat{F_\delta^{(\ell_i)}}\|_{M+2,\fR_{K_i}} \lesssim (1+|K_i|)^{-2-\beta+\ell_i},
    \end{equation*}
    which is a direct corollary of \cite[Lemma~4.8]{HX19}.
\end{proof}

\begin{lem}
\label{lem:exchange}
    Let $\Phi$ be a random smooth function on $\fR_{\K}$, then we have
    \begin{equation*}
        \E \sup_{\r:|\r|_{\infty}\leq M+2} \sup_{\Btheta\in\fR_{\K}} |\d^{\r}_{\Btheta} \Phi(\Btheta)|^{2n} \lesssim \sup_{\r:|\r|_{\infty}\leq M+3} \sup_{\Btheta\in\fR_{\K}} \E |\d^{\r}_{\Btheta} \Phi(\Btheta)|^{2n}
    \end{equation*}
    uniformly over $\K$.
\end{lem}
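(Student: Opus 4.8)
The plan is to pass from the supremum over $\Btheta \in \fR_{\K}$ to a supremum over a finite grid, and then to exploit smoothness to trade the loss from the grid approximation for one extra derivative. More precisely, fix a multi-index $\r$ with $|\r|_\infty \leq M+2$ and write $\Psi := \d_{\Btheta}^{\r} \Phi$; it is a random smooth function on the fixed cube $\fR_{\K}$ of side length $2$. For any $\delta \in (0,1)$ let $\Lambda_\delta \subset \fR_{\K}$ be a $\delta$-net, which can be chosen with $|\Lambda_\delta| \lesssim \delta^{-N}$ (the implied constant depending only on $N$, not on $\K$, since all the cubes are translates of one another). By the mean value inequality, for every $\Btheta \in \fR_{\K}$ there is a grid point $\Btheta' \in \Lambda_\delta$ with $|\Btheta - \Btheta'|_\infty \leq \delta$, hence
\begin{equation*}
    |\Psi(\Btheta)| \leq |\Psi(\Btheta')| + \delta \sup_{\Btheta \in \fR_{\K}} |\nabla \Psi(\Btheta)| \leq \sup_{\Btheta' \in \Lambda_\delta} |\Psi(\Btheta')| + \delta \sup_{\Btheta \in \fR_{\K}} \sup_{\s:|\s|_\infty \leq 1} |\d_{\Btheta}^{\r+\s} \Phi(\Btheta)|\;.
\end{equation*}
Taking the supremum over $\Btheta$ and then over $\r$, raising to the $2n$-th power and taking expectations, the first term is controlled by $\sum_{\Btheta' \in \Lambda_\delta} \E |\Psi(\Btheta')|^{2n} \leq |\Lambda_\delta| \sup_{\r,\Btheta'} \E|\d_{\Btheta'}^{\r}\Phi|^{2n}$ (now with $|\r|_\infty \leq M+2$), while the second term is $\delta^{2n}$ times the quantity $S := \E \sup_{\r:|\r|_\infty \leq M+3} \sup_{\Btheta} |\d_{\Btheta}^{\r}\Phi|^{2n}$ that we are trying to bound but with $M$ replaced by $M+1$.

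The cleanest way to close the argument is a bootstrap on $M$. Define $A_M := \E \sup_{\r:|\r|_\infty \leq M+2} \sup_{\Btheta \in \fR_{\K}} |\d_{\Btheta}^{\r}\Phi|^{2n}$ and $B_M := \sup_{\r:|\r|_\infty \leq M+3} \sup_{\Btheta \in \fR_{\K}} \E|\d_{\Btheta}^{\r}\Phi|^{2n}$; the target inequality is $A_M \lesssim B_M$. The display above, after choosing e.g.\ $\delta = \tfrac{1}{2}$ (so no small parameter is actually needed) or optimizing, yields a relation of the shape $A_M \lesssim \delta^{-N} B_M + \delta^{2n} A_{M+1}$. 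Since we only ever need $|\r|_\infty \leq M+2$ for finitely many indices and $\Phi$ is smooth, one can run this from the top: there is no genuine infinite regress because the statement quantifies over a fixed finite $M$, so it suffices to iterate the inequality $K := M+3$ times (or simply once, absorbing $\delta^{2n}A_{M+1}$ by an a priori finiteness bound and then re-deriving the clean constant), each step replacing $A_{M+j}$ by $\delta^{-N}B_{M+j} + \delta^{2n} A_{M+j+1}$; after finitely many steps the leftover term involves $A_{M+K}$ multiplied by $\delta^{2nK}$, which for a fixed small $\delta$ depending only on $N$, $n$ and the number of iterations can be absorbed or is controlled by $B_M$ up to changing the constant. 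An even slicker route, avoiding the bootstrap entirely, is to note that it suffices to prove the inequality with $M+2$ on the left and $M+3$ on the right for \emph{one} fixed $M$; apply the grid estimate with $\delta$ chosen so that $C\delta^{2n} \leq \tfrac12$, giving $A_M \leq 2 C \delta^{-N} B_M$, and observe that the term $\delta^{2n}A_{M+1}$ is legitimately subtracted because $A_{M+1} \leq A_M'$ where $A_M'$ is the same quantity with one more derivative — this is circular unless handled carefully, so the bootstrap phrasing is the safe one.

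The only real point requiring care is justifying that the constants in the $\delta$-net estimate and in the mean value inequality are uniform over $\K \in \Z^d$; this is immediate since $\fR_{\K}$ is a translate of $\fR_{\0}$ and all the quantities involved (supremum of derivatives, cardinality of a $\delta$-net) are translation invariant. A secondary routine point is the exchange of $\E$ and the finite sum over grid points, which is trivial, and the elementary inequality $\sup_j a_j \leq \sum_j a_j$ for nonnegative reals, used to pass from $\E \sup$ over the net to $\sum \E$. I do not expect any genuine obstacle here: the lemma is a soft Kolmogorov-type / finite-net argument, and the main subtlety is merely to phrase the bootstrap so that the $\delta^{2n}$-weighted higher-derivative term does not create a circular dependence — this is handled by terminating the iteration after finitely many steps, which is legitimate because the number of derivatives is bounded by the fixed integer $M+3$ on the right-hand side throughout.
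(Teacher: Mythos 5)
Your grid/bootstrap argument has a genuine gap: the recursion you derive,
\begin{equation*}
    A_M \;\lesssim\; \delta^{-N} B_M + \delta^{2n} A_{M+1}\;,
\end{equation*}
does not close, because $A_{M+1}$ involves derivatives of order up to $M+3$ \emph{under the expectation of a supremum}, which is not controlled by $B_M$. Iterating produces $A_{M+K}$ with derivatives of order $M+K+2$, while the right-hand side of the lemma only allows $M+3$. The ``absorption'' variant fails for the same reason: $A_{M+1}\ge A_M$ (more multi-indices are included in the supremum), so $\delta^{2n}A_{M+1}$ cannot be absorbed into the left-hand side however small $\delta$ is. You flag the circularity yourself but then defer to ``the bootstrap phrasing,'' which has exactly the same defect — there is no base case that terminates the regression within the derivative budget $M+3$. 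The root of the problem is that a pointwise mean-value estimate costs you a full $\sup_{\Btheta}|\nabla\Psi|$ in the error term, i.e.\ it trades a supremum of a low derivative for a supremum of a higher derivative, and the lemma gives you nowhere to put that supremum.

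The argument the paper invokes ([HX19, Lemma~4.3]) is of Sobolev type and avoids this entirely by never generating another $\E\sup$. With $\Psi = \partial^{\r}_{\Btheta}\Phi$ and $Q = \fR_{\K}$ a cube of fixed side length $2$ in $\R^N$, Morrey's inequality gives, for any $p>N$ and $q = 2n/p \geq 1$,
\begin{equation*}
    \sup_{Q}|\Psi|^{2n}
    \;\lesssim\; \Big(\int_Q |\Psi|^{p} + |\nabla\Psi|^{p}\,d\Btheta\Big)^{q}
    \;\lesssim\; \int_Q |\Psi|^{2n} + |\nabla\Psi|^{2n}\,d\Btheta\;,
\end{equation*}
where the second step is Jensen on the finite-volume cube. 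Taking $\E$ and applying Tonelli turns the $\Btheta$-integral of random variables into an integral of expectations, and since $|Q|$ is fixed this is bounded by $\sup_{\Btheta}\E|\Psi(\Btheta)|^{2n} + \sup_{\Btheta}\E|\nabla\Psi(\Btheta)|^{2n}$, which is exactly $B_M$ (the gradient raises the derivative order by one, landing at $M+3$). Uniformity over $\K$ is automatic because all $\fR_{\K}$ are translates of a fixed cube. The key structural difference from your approach: Sobolev embedding converts a supremum into a \emph{spatial integral}, and integrals commute with $\E$ by Tonelli, whereas your net estimate converts a supremum into another supremum plus a smaller supremum, which keeps you inside the problematic $\E\sup$ world.
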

\begin{proof}
    Same as \cite[Lemma~4.3]{HX19}.
\end{proof}

In what follows, we fix a mollifier $\varrho$ on $\R$, let $\varrho_\delta := \delta^{-1} \varrho(\cdot / \delta)$, and write
\begin{equation*}
    F_\delta^{(\ell)} := F^{(\ell)} * \varrho_\delta\;, \qquad G_\delta^{(\ell)} := G^{(\ell)} * \varrho_\delta\;.
\end{equation*}
Let $P$ denote the heat kernel. For every function $\varphi$ and $x \in \R^d$, let
\begin{equation*}
    \varphi^{\lambda}_x(y) := \lambda^{-|\fs|} \varphi \Big(\frac{y-x}{\lambda}\Big)\;.
\end{equation*}
We write $\varphi^\lambda$ for $\varphi^\lambda_0$ for simplicity.

\subsection{The KPZ equation -- proof of Theorem~\ref{thm:kpz}}

In this section, we follow \cite[Section~5]{HX19}. We have
\begin{equ}
    \Psi_{\eps} = \d_x P \ast \xi_{\eps}, 
\end{equ}
where $\ast$ is space-time convolution and $\xi_{\eps}$ is the mollified space-time white noise as given in \eqref{e:kpz_macro}. \cite[Lemma~4.1]{HX19} deduces that $\Psi_{\eps}$ satisfies Assumption~\ref{as:cor} with $\alpha=1$. Here the kernel $K_0$ in \eqref{e:kernel} is a suitable truncation of $P$ and equals to $P$ in a domain containing the origin. Recall the regularity structure defined in \cite[Section~3.2]{HX19}. We first list all the symbols with their corresponding regularities appearing in the regularity structures.
\begin{equation} \label{e:table_kpz}
\begin{array}{ *{13}{c}}
	\toprule
	\text{object} \; (\tau): &\<2'>\; &\<2'1'>\; &\<1'>\; &\<2'2'0'>\; & \<2'2'0>\; &\<2'1'1'>\; &\<2'0'>\; &\<1'1'>\; &\<2'0>\; &\<0'>\; &\<2'1'0>\; &\<1'0>\\
	\midrule
	\text{reg.} \; (|\tau|): & -1- & -\frac{1}{2}- & -\frac{1}{2}- & 0- & 0- & 0- & 0- & 0- & 0- & 0- & \frac{1}{2}- & \frac{1}{2}-\\
	\bottomrule
\end{array}
\end{equation}

\begin{rmk}
    The ``regularity" $|\tau| = c-$ should be understood as $|\tau| = c-\kappa$ for sufficiently small $\kappa>0$. 

    Also, for the above objects, we call $\<0'>$, $\<1'>$, $\<2'>$, $\<1'0>$ and $\<2'0>$ first order processes. The processes $\<2'1'>$, $\<2'2'0>$, $\<2'0'>$, $\<1'1'>$ and $\<2'1'0>$ are built from two first order processes, and we call them second order processes. Similarly, $\<2'2'0'>$ and $\<2'1'1'>$ are third order processes. 
\end{rmk}

\begin{thm}
    \label{thm:sto_convergence}
    Let $\hPi^{\eps}$ be the renormalised model defined in \cite[(3.2),(3.3)]{HX19} and let $\Pi^{\KPZ}$ be the standard KPZ model defined in \cite[Appendix~A]{HX19}.  Then there exists $\zeta>0$ such that for every symbol $\tau$ in Table \eqref{e:table_kpz} with $|\tau|<0$, we have
    \begin{equation}
        \label{e:main_bound}
        \big( \E| \scal{\hPi_z^{\eps}\tau-\Pi_z^{\tiny\text{ KPZ}}\tau,\varphi^{\lambda}}|^{2n}\big)^{\frac{1}{2n}} \lesssim_n \eps^{\zeta} \lambda^{|\tau|+\zeta}
    \end{equation}
   uniformly in $\eps, \lambda \in(0,1)$, $z\in\R^{+}\times\T$ and smooth function $\varphi$ compactly supported in the unit ball with $\|\varphi\|_{\cC^1}\leq1$. As a consequence, $\hPi^{\eps}$ converges to $\Pi^{\KPZ}$ in probability in the space of modelled distributions. 
\end{thm}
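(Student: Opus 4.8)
The plan is to follow the scheme of \cite[Section~5]{HX19}, substituting the frequency-\emph{independent} bound of Theorem~\ref{thm:main_result} (and, for one-frequency objects, the bound of \cite{Xu18}) for the frequency-\emph{dependent} bound of \cite[Theorem~6.4]{HX19} at every place where the nonlinearity has to be Fourier-expanded. It suffices to prove the moment bound \eqref{e:main_bound} for the finitely many symbols $\tau$ in Table~\eqref{e:table_kpz} with $|\tau|<0$: the positive-homogeneity symbols $\<1'0>$ and $\<2'1'0>$ are then obtained from the negative ones through the kernel structure and products of functions, uniformity in $z$ is automatic from stationarity of $\Psi_\eps$, and a standard Kolmogorov-type argument (as in \cite[Section~5]{HX19}) upgrades \eqref{e:main_bound} to convergence of $\hPi^\eps$ to $\Pi^\KPZ$ in probability in the space of models. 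For each $\tau$ with $|\tau|<0$, I would first split $\hPi^\eps_z\tau$ into a ``nice part'' and a ``remainder'' following \cite{HX19}: the nice part replaces each occurrence of the driving nonlinearity by a fixed low-order chaos truncation ($\eps^{-1}F(\sqrt\eps\Psi_\eps)$ by a constant multiple of $\Psi_\eps^{\diamond 2}$, $\eps^{-1/2}F'(\sqrt\eps\Psi_\eps)$ by a multiple of $\Psi_\eps$, $F''(\sqrt\eps\Psi_\eps)$ by $\E F''(\sqrt\eps\Psi_\eps)$), and converges to $\Pi^\KPZ_z\tau$ with a polynomial rate in $\eps$ by \cite[Theorem~6.2]{HX19}, which needs only $F\in\cC_\pP^{2,\beta}$ (using H\"older continuity of $F''$ to obtain the rate). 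The remainder then has to be bounded by $\eps^{\zeta'}\lambda^{|\tau|+\zeta'}$ for some $\zeta'>0$; since every beyond-Wick divergence in the KPZ regularity structure is merely logarithmic in $\eps$, this polynomial smallness absorbs those divergences, so one may apply the Wick-only bound of Theorem~\ref{thm:main_result} directly to the remainder without renormalising it.

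The core step is the estimate of the remainder for the first- and second-order symbols. Fourier-expanding each factor, $F^{(\ell)}(\sqrt\eps\Psi_\eps(y))=\int\widehat{F^{(\ell)}}(\theta)\,\trig_\zeta(\theta\sqrt\eps\Psi_\eps(y))\,d\theta$ (the parity of $\widehat{F^{(\ell)}}$ turning $e^{i\theta\cdot}$ into $\cos$ or $\sin$ according to that of $\ell$), the $2n$-th moment of the remainder paired with $\varphi^\lambda$ becomes a pairing of the tensor-product distribution $\bigotimes_{i=1}^{2n}\bigotimes_j\widehat{F^{(\ell_j)}}$ in the frequency variables against the smooth function
\[ \Btheta\longmapsto\E\Big[\textstyle\prod_{i=1}^{2n}\big(\text{trigonometric functional of }\sqrt\eps\Psi_\eps\text{ tested against the kernels and }\varphi^\lambda\big)\Big], \]
which, after the relabelling described in Section~\ref{sec:statement}, is exactly of the type controlled by Theorem~\ref{thm:main_result} --- one frequency per factor, hence one or two frequencies in all. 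I would then localise the frequency integral by Lemma~\ref{lem:local_decompose}, bound the local distribution norm $\|\bigotimes_j\widehat{F^{(\ell_j)}}\|_{M+2,\fR_\K}\lesssim\prod_j(1+|K_j|)^{-2-\beta+\ell_j}$ by Lemma~\ref{lem:local_bound}, exchange the supremum over $\Btheta$ with the expectation by Lemma~\ref{lem:exchange}, and bound the $\Btheta$-derivatives of the functional uniformly in $\Btheta$ by \eqref{eq:main_result} (for two-frequency objects) or by the bound of \cite{Xu18} (for one-frequency objects). Summing over $\K\in\Z^d$, the series $\sum_\K\prod_j(1+|K_j|)^{-2-\beta+\ell_j}$ converges precisely because every factor has derivative order $\ell_j\leq1$; this is exactly where the regularity threshold $\cC_\pP^{2,\beta}$ enters and where the frequency-\emph{independence} of Theorem~\ref{thm:main_result} (as opposed to the polynomial-in-frequency bound of \cite{HX19}) is indispensable. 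This disposes of $\<2'>$, $\<1'>$, $\<2'0>$, $\<2'1'>$, $\<2'2'0>$ and $\<1'1'>$.

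The symbol $\<0'>$, whose model is the single $F''$-factor with its mean subtracted (so $\ell=2$), falls outside the scheme above because the frequency series with $\ell=2$ diverges; it is instead estimated directly. Since $F''$ has at most polynomial growth, $\cov\big(F''(\sqrt\eps\Psi_\eps(y_1)),F''(\sqrt\eps\Psi_\eps(y_2))\big)\lesssim\eps/(|y_1-y_2|+\eps)$ and, more generally, the centred $2n$-point correlation obeys a Nelson-type bound, which already yields the required estimate; the two-factor symbol $\<2'0'>$ (one $F$-factor and one $F''$-factor) is then handled by combining this Nelson-type bound for the $F''$-factor multiplicatively with the Fourier estimate above for the $F$-factor.

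The hardest part, and the only one requiring a genuinely KPZ-specific argument rather than a direct appeal to Theorem~\ref{thm:main_result}, is posed by the third-order symbols $\<2'2'0'>$ and $\<2'1'1'>$: their Fourier expansion produces three frequencies, beyond the reach of Theorem~\ref{thm:main_result}. For these I would exploit the polynomial smallness of the remainder --- after subtracting the nice part, the remainder is rewritten as an analytically well-defined product of a one-frequency and a two-frequency stochastic object, the smallness of one factor being what legitimises the product despite the apparent regularity deficit --- and each factor is estimated by Theorem~\ref{thm:main_result} or \cite{Xu18} as before. This reduction is carried out in detail in Section~\ref{sec:reg_0}. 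Assembling the bounds for all symbols with $|\tau|<0$ yields \eqref{e:main_bound} and hence the convergence of the model.
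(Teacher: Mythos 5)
Your proposal identifies the right three ingredients (Fourier expansion of the nonlinearity, localisation of the frequency integral via Lemmas~\ref{lem:local_decompose}--\ref{lem:exchange}, and the frequency-independent bound of Theorem~\ref{thm:main_result} to make the series over $\K\in\Z^d$ converge at the threshold regularity), and you correctly identify that the third-order symbols \<2'2'0'> and \<2'1'1'> need a separate product-in-H\"older-spaces argument, which is exactly what Section~\ref{sec:reg_0} does. But there is a genuine misreading of the ``nice part / remainder'' decomposition, and this misreading would make your plan fail as stated.

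You take the nice part to be a chaos truncation: $\eps^{-1}F(\sqrt\eps\Psi_\eps)\mapsto c\,\Psi_\eps^{\diamond 2}$, $\eps^{-1/2}F'(\sqrt\eps\Psi_\eps)\mapsto c'\,\Psi_\eps$, etc., and you propose to apply Theorem~\ref{thm:main_result} to the remainder $\hPi^\eps\tau-(\text{chaos truncation})$. This does not work for two reasons. First, for an object like \<2'1'> which is a product of two factors, the difference $\tT_{(0)}(F'(X))\tT_{(1)}(F(Y))-C_1 C_2\,X\,Y^{\diamond2}$ contains mixed terms (one factor truncated, one not) which are not of the form $\aA_{\eps,\lambda}\fF$ for any single choice of $(m_1,m_2)$. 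Second, and more fundamentally, Theorem~\ref{thm:main_result} applied to the full $\aA_{\eps,\lambda}\fF$ with $m_1=1,m_2=2,\alpha=1,\gamma=1$ gives exactly $\eps^{3/2-\eta}\lambda^{-1/2-\eta}$; after multiplying by the prefactor $\eps^{-3/2}$ this is $O(\lambda^{-1/2-\eta})$ with \emph{no} $\eps^\zeta$ smallness. The whole object $\hPi^\eps\<2'1'>$ is of that size, as it should be, so there is no smallness to be found by bounding the ``remainder'' against Theorem~\ref{thm:main_result} alone. The paper's decomposition instead mollifies the nonlinearity: $\hPi^\eps\tau=\hPi^\eps\tau^{(\delta)}+(\hPi^\eps\tau-\hPi^\eps\tau^{(\delta)})$, with $\hPi^\eps\tau^{(\delta)}$ built from $F_\delta:=F*\varrho_\delta$ and $\delta=\eps^\nu$. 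Then $\hPi^\eps\tau^{(\delta)}$ converges to $\Pi^\KPZ\tau$ by the \emph{frequency-dependent} analysis of \cite{HX19} because $\widehat{F_\delta}$ decays arbitrarily fast (costing a harmless $\delta^{-N}$), and the remainder becomes the pairing $\langle\widehat{F'_\delta}\otimes\widehat{F_\delta}-\widehat{F'}\otimes\widehat F,\,\aA_{\eps,\lambda}\fF\rangle_{\Btheta}$. The smallness $\eps^\zeta$ then comes from the Fourier-side difference via Lemma~\ref{lem:local_bound} ($\lesssim\delta^\omega\prod(1+|K_i|)^{-2-\beta+\ell_i+\omega}$), while Theorem~\ref{thm:main_result} supplies the uniform-in-$\Btheta$ bound on $\aA_{\eps,\lambda}\fF$ so that the $\K$-sum converges. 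The mollification, not a chaos truncation, is the mechanism by which smallness is produced and by which the remainder is put into the precise form controlled by Theorem~\ref{thm:main_result}.

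One further minor point: for \<0'>, the paper again mollifies ($F''\mapsto F''_\delta$) and then simply uses the crude pointwise bound $|F''-F''_\delta|\lesssim\delta^\beta(1+|x|)^M$ together with Gaussian moments of $\sqrt\eps\Psi_\eps$; your proposed route via a Nelson-type covariance bound on the centred $F''$ is a legitimate alternative, just not the one the paper uses. Also, your sentence ``one frequency per factor, hence one or two frequencies in all'' is not accurate as stated --- several symbols (\<2'2'0'>, \<2'1'1'>) have three factors and thus three frequencies --- though you do correct this later by flagging the third-order symbols separately.
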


\begin{rmk}
    By \cite[Proposition~6.3]{HQ}, the convergence of $\hPi^{\eps}$ is a direct corollary of \eqref{e:main_bound} with negative homogeneities. Then it suffices to prove \eqref{e:main_bound} for every symbol $\tau$ with $|\tau|<0$. Furthermore, we can assume $z=0$ by stationarity in the following proofs. 
\end{rmk}

    Now we are ready to prove Theorem~\ref{thm:kpz}.
    
\begin{proof}[Proof of Theorem~\ref{thm:kpz}]
    This follows directly from convergence of the stochastic objects (Theorem~\ref{thm:sto_convergence}), well-posedness and convergence of the abstract equation (Theorem~\ref{thm:abstract_eq}) and continuity of the reconstruction operator in the regularity structure. (See also the proof of \cite[Theorem 5.7]{HX19}). 
\end{proof}
    
    Now we turn to the proof of Theorem~\ref{thm:sto_convergence}. The first order processes except $\<0'>$ have been treated in \cite{Xu18}. $F \in \cC_{\pP}^{2,\beta}$ implies the desired decay of $\widehat{F}$ and $\hF'$. Then under parabolic metric, \cite[Theorem~1.4]{Xu18} shows that $\hPi^{\eps} \<2'>\rightarrow \Psi^{\diamond2}$ a.s. in $\cC^{-1-\kappa}$ and $\hPi^{\eps} \<1'>\rightarrow \Psi$ a.s. in $\cC^{-\frac{1}{2}-\kappa}$ by the bounds given in $\eqref{e:main_bound}$.

Theorem~\ref{thm:main_result} gives an improved bound for second order processes, which reduces the requirement for the regularity of $F$. The discussion will be found in Section~\ref{sec:spde_21}.
    
For the objects with regularity $0-$, we provide a simpler deterministic method. Write the term into the product $f\cdot g$ and choose proper mollified functions $f_{\delta}$ and $g_\delta$. Then \cite[Section~5]{HX19} has proved the desired convergence of the main part $f_\delta\cdot g_\delta$ since $f_\delta$ and $g_\delta$ are smooth. Now it suffices to show that the remaining parts converge to $0$. The convergence follows from the boundedness of $f_\delta,g_\delta$ and the smallness of the difference $f-f_\delta,g-g_\delta$. We provide details for the object \<2'1'1'> in Section~\ref{sec:reg_0} to demonstrate our method.

\subsubsection[]{The case \<0'>}
\label{sec:spde_0}
    Recall that we write $\tau_\eps$ for $\hPi^\eps \tau$. So we have
    \begin{equation*}
        \<0'>_\eps (z) = \frac{1}{2a} F''(\sqrt{\eps} \Psi_{\eps}(z)) - 1\;.
    \end{equation*}
    We split $\<0'>_\eps$ into
    \begin{equation*}
        \<0'>_\eps = \<0'>_\eps^{(\delta)} + \big( \<0'>_\eps - \<0'>_\eps^{(\delta)} \big)\;,
    \end{equation*}
    where $\<0'>_\eps^{(\delta)}$ is the process as $\<0'>_\eps$ but with $F''$ replaced by $F''_\delta$. Proceeding as other first order processes (see \cite[Section~5.2.1]{HX19}), one can show that almost surely
    \begin{equ}
        \bigg\| \frac{1}{2a} F_{\delta}''(\sqrt{\eps} \Psi_{\eps})-1 \bigg\|_{\cC^{-\kappa}} \lesssim \eps^{\kappa'} \delta^{-N} + \delta^\beta,
    \end{equ}
    where $N\in \N$ is a large constant, $\kappa, \kappa'$ are sufficiently small. Thus, as long as one chooses $\delta = \eps^{\nu}$ for $\nu>0$ sufficiently small, then
    \begin{equation*}
        \frac{1}{2a} F_{\delta}''(\sqrt{\eps} \Psi_{\eps}) \rightarrow 1
    \end{equation*}
    almost surely in $\cC^{-\kappa}$. For the remainder term $F''-F_{\delta}''$, we have
    \begin{equation*}
        |F''(x) - F_{\delta}''(x)| \lesssim \delta^{\beta} (1+|x|)^M\;.
    \end{equation*}
    Combining this and the fact that $\sqrt{\eps}\Psi_{\eps}$ is Gaussian with constant variance, we get the bound
    \begin{equation*}
        (\E |\scal{F''(\sqrt{\eps}\Psi_{\eps}) - F_{\delta}''(\sqrt{\eps}\Psi_{\eps}), \varphi^{\lambda}}|^{2n})^{\frac{1}{2n}} \lesssim \eps^{\nu \beta}.
    \end{equation*}
    Therefore, the bound \eqref{e:main_bound} for $\tau=\<0'>$ holds.

\subsubsection[]{The case \<2'1'>}
\label{sec:spde_21}

    In this case, $m_1=1$, $m_2=2$, and $\fF$ is defined by
    \begin{equ} \label{e:ff_21}
        \fF(\Btheta,x,y) = \tT_{(0)}(\sin(\theta_{\fx} X)) \tT_{(1)}(\cos(\theta_{\fy} Y)).
    \end{equ}
    Our aim is to show that $\<2'1'>_\eps$ converges to the object $\<2'1'>$ in the limiting KPZ model in $\cC^{-\frac{1}{2}-\kappa}$ in probability, where
    \begin{equation} \label{e:21}
        \<2'1'>_\eps(x) = \frac{1}{2 a^2 \eps^{\frac{3}{2}}} \int K(x,y) F'(X) \tT_{(1)}(F(Y)) dy\;.
    \end{equation}
    Again, we split $\<2'1'>_\eps$ into
    \begin{equation*}
        \<2'1'>_\eps = \<2'1'>_\eps^{(\delta)} + \big( \<2'1'>_\eps - \<2'1'>_\eps^{(\delta)} \big),
    \end{equation*}
    where $\<2'1'>_\eps^{(\delta)}$ is the same process as $\<2'1'>_\eps$ except that $F'$ and $F$ are replaced by their mollified versions $F'_\delta$ and $F_\delta$. 
    
    In \cite[Section~5.2]{HX19}, the authors showed that there exists $\nu>0$ such that as long as one chooses $\delta = \eps^{\nu}$, one has $\<2'1'>_\eps^{(\delta)} \rightarrow \<2'1'>$ in $\cC^{-\frac{1}{2}-\kappa}$ in probability. It remains to show the convergence of $\<2'1'>_\eps - \<2'1'>_\eps^{(\delta)}$ to $0$ with $\delta = \eps^{\nu}$ and $F \in \cC^{2,\eta}_\pP$. Fourier expanding $F$ and $F'$, we have
    \begin{equ}
        \scal{\<2'1'>_\eps - \<2'1'>_\eps^{(\delta)}, \varphi^\lambda} = (2a^2 \eps^{\frac{3}{2}})^{-1} \scal{ \widehat{F'_{\delta}} \otimes \widehat{F_{\delta}} - \widehat{F'} \otimes \widehat{F},\aA_{\eps,\lambda} \fF}_{\Btheta}\;,
    \end{equ}
    where the subscript $\Btheta$ means that the testing/integration is in the $\Btheta = (\theta_\fx, \theta_\fy)$ variable. By Lemmas~\ref{lem:local_decompose} and~\ref{lem:exchange}, we get
    \begin{equ}
        \| \scal{\<2'1'>_\eps - \<2'1'>_\eps^{(\delta)}, \varphi^\lambda} \|_{2n} \lesssim \sum_{\K\in\Z^2} \|\widehat{F'_{\delta}} \otimes \widehat{F_{\delta}} - \widehat{F'} \otimes \widehat{F}\|_{M+2,\fR_{\K}} \sup_{\r:|\r|_{\infty}\leq M+3} \sup_{\Btheta\in\fR_{\K}} \eps^{-\frac{3}{2}} \|\d_{\Btheta}^{\r} \aA_{\eps,\lambda}\fF(\Btheta)\|_{2n}.
    \end{equ}
    Choosing $\omega = \frac{\beta}{2}$ in Lemma~\ref{lem:local_bound} and plugging $\delta = \eps^{\nu}$, we have
    \begin{equ}
        \| \widehat{F'_{\delta}} \otimes \widehat{F_{\delta}} - \widehat{F'} \otimes  \widehat{F}\|_{M+2,\fR_{\K}} \lesssim \eps^{\frac{\beta \nu}{2}} (1+|K_1|)^{-1-\frac{\beta}{2}} (1+|K_2|)^{-2-\frac{\beta}{2}}\;.
    \end{equ}
    On the other hand, by Theorem~\ref{thm:main_result}, for every $\zeta>0$, we have the bound
    \begin{equation*}
           \sup_{\r:|\r|_{\infty}\leq M+3} \sup_{\Btheta\in\fR_{\K}} \eps^{-\frac{3}{2}} \big(\E |(\aA_{\eps,\lambda}\d_{\Btheta}^{\r}\fF)(\Btheta)|^{2n} \big)^{\frac{1}{2n}} \lesssim \eps^{-\zeta}\lambda^{-\frac{1}{2}-\kappa+\zeta}\;.
    \end{equation*}
    Choosing $\zeta < \frac{\beta \nu}{4}$, we obtain
    \begin{equation*}
        \| \scal{\<2'1'>_\eps - \<2'1'>_\eps^{(\delta)}, \varphi^\lambda} \|_{2n} \lesssim \eps^{\zeta} \lambda^{-\frac{1}{2}-\kappa}\;.
    \end{equation*}
    This finishes the proof of the case $\<2'1'>$.

\subsubsection[]{The case \<2'1'1'>} \label{sec:reg_0}
    Before approaching $\<2'1'1'>$, we first introduce two lemmas on the regularity of functions/distributions. We define a class of test functions
    \begin{equ}
        C_{p} = \{ \varphi\in\cC^{\infty}(\R^d)\ | \ \text{supp}(\varphi) \subset B(0,1),\ \|\varphi\|_{\cC^{p}} \leq 1\}.
    \end{equ}

\begin{lem} \label{lem:prod_holder}
    Let $p, q \in (0,1)$ such that $p>q$. Suppose $f \in \cC^p$ and $g \in \cC^{-q}$. Then for every $\eta>0$, there exists $C>0$ depending on $\eta$ such that
    \begin{equation*}
        \big|\scal{g, \big(f - f(x)\big) \varphi_x^\lambda}\big| \leq C \; \|f\|_{\cC^p} \|g\|_{\cC^{-q}} \; \lambda^{p-q-\eta}\;.
    \end{equation*}
    The bound is uniform over $\varphi \in C_p$, $x \in \R^d$ and $\lambda \in (0,1)$. 
\end{lem}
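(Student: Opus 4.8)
The plan is to reduce the statement to a standard paraproduct-type estimate by a dyadic (Littlewood--Paley) decomposition of the test function $\varphi_x^\lambda$, exploiting the cancellation built into the factor $f - f(x)$. First I would translate everything into convolutions with a suitable approximation of identity: write $f_\mu := f * \chi^\mu$ for a smooth compactly supported mollifier $\chi$, so that $\|f - f_\mu\|_{L^\infty} \lesssim \mu^p \|f\|_{\cC^p}$ and $\|f_\mu\|_{\cC^1} \lesssim \mu^{p-1} \|f\|_{\cC^p}$ (since $p < 1$). Splitting $f - f(x) = (f - f_\mu) + (f_\mu - f_\mu(x)) + (f_\mu(x) - f(x))$ and testing against $g$, the first and third contributions are controlled crudely: pairing $g \in \cC^{-q}$ with $(f - f_\mu)\varphi_x^\lambda$ costs $\|g\|_{\cC^{-q}} \lambda^{-q} \cdot \mu^p \|f\|_{\cC^p} \cdot (\text{test-function norm factors})$, after noting $(f-f_\mu)\varphi_x^\lambda$ is a bump at scale $\lambda$ with amplitude $\mu^p$ and $\cC^{\lceil q\rceil}$-norm controlled by $\mu^p \lambda^{-\lceil q \rceil}$ times constants, whereas the middle term uses the Lipschitz-type bound $|f_\mu(y) - f_\mu(x)| \lesssim \mu^{p-1}|y - x| \lesssim \mu^{p-1}\lambda$ on the support of $\varphi_x^\lambda$, giving a bump of amplitude $\mu^{p-1}\lambda$ at scale $\lambda$, hence a pairing bound $\|g\|_{\cC^{-q}} \lambda^{-q} \cdot \mu^{p-1}\lambda \cdot \lambda^{-\lceil q \rceil}(\dots)$.

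The key point is then to optimise over the free parameter $\mu \in (0,1)$. Roughly, the first/third contributions behave like $\mu^p \lambda^{-q-\lceil q\rceil}$ and the middle like $\mu^{p-1}\lambda^{1-q-\lceil q\rceil}$ up to harmless constants; balancing $\mu^p \sim \mu^{p-1}\lambda$ forces $\mu \sim \lambda$, which yields the clean bound $\lambda^{p - q}$ modulo the $\lceil q\rceil$ losses. To get down to $\lambda^{p-q-\eta}$ rather than $\lambda^{p-q}$, the cleaner route is to avoid the integer loss $\lceil q\rceil$ altogether by performing a genuine dyadic decomposition: write $\varphi_x^\lambda = \sum_{\lambda \le 2^{-j} \le 1} (\varphi_x^\lambda)_j + (\text{low-frequency piece})$ where $(\cdot)_j$ is a Littlewood--Paley block at frequency $2^j$, pair each block with $g$ using the definition of $\cC^{-q}$ at scale $2^{-j}$, and use on each block the bound $\|(f - f(x))(\varphi_x^\lambda)_j\|_{L^\infty} \lesssim (\lambda^p \wedge 2^{-jp}\dots)$-type estimates coming from $f \in \cC^p$; then sum the geometric series, which converges precisely because $p > q$, and the $\eta$ absorbs the logarithmic factor from the number of scales between $\lambda$ and $1$ together with the endpoint where $2^{-j}$ is comparable to $\lambda$.

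The main obstacle will be bookkeeping the test-function normalisations correctly: one must track how the $\cC^{\lceil q\rceil}$-norm (or the frequency-localisation) of the product $(f - f(x))\varphi_x^\lambda$ scales in $\lambda$, since $\varphi_x^\lambda$ itself has $\cC^{k}$-norm growing like $\lambda^{-|\fs| - k}$ while the $\lambda^{-|\fs|}$ normalisation and the cancellation from $f - f(x)$ must be fed in at the right place; getting a bound that is genuinely uniform in $x$ and $\lambda \in (0,1)$ and produces exactly the exponent $p - q - \eta$ (and not $p - q - \lceil q \rceil$ or similar) is the delicate part. A convenient way to sidestep the integer-ceiling issue is to phrase the whole argument in terms of wavelet/Littlewood--Paley characterisations of $\cC^p$ and $\cC^{-q}$ as in Hairer's reconstruction-type arguments, at which point the estimate becomes the standard fact that the resonant product $f \cdot g$ makes sense when $p - q > 0$, localised here around the point $x$; the term $f(x)\varphi_x^\lambda$ is handled trivially since $f(x)$ is a bounded constant and $\langle g, \varphi_x^\lambda\rangle$ is controlled by $\|g\|_{\cC^{-q}}\lambda^{-q}$, which is already better than $\lambda^{p-q-\eta}$ so it does not even contribute — indeed it cancels against the subtracted piece in the statement, which is the whole reason the cancellation $f - f(x)$ is there.
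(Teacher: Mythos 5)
Your plan circles around the right idea, but as written there is a genuine gap, and it is precisely at the point where the exponent $\lambda^{p-q-\eta}$ (as opposed to $\lambda^{p-q-1}$) has to come from.

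The mollification route does not close. In your crude bookkeeping you pair $g\in\cC^{-q}$ against the pieces $(f-f_\mu)\varphi_x^\lambda$ and $(f_\mu-f_\mu(x))\varphi_x^\lambda$ by controlling their $\cC^{\lceil q\rceil}$-norm, i.e.\ their $\cC^1$-norm. But $f-f_\mu$ is only $\cC^p$ with $p<1$, so $(f-f_\mu)\varphi_x^\lambda$ has no controlled $\cC^1$-norm at all; the claim that it is ``$\mu^p\lambda^{-1}$ times constants'' is not justified. Even setting this aside, your own arithmetic produces $\lambda^{p-q-\lceil q\rceil}=\lambda^{p-q-1}$, which you correctly note is too lossy; the ``$\eta$'' in the lemma is arbitrarily small, so a fixed loss of $\lambda^{-1}$ cannot be absorbed. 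The Littlewood--Paley route you propose as a fix decomposes only $\varphi_x^\lambda$, but the product $(f-f(x))(\varphi_x^\lambda)_j$ inherits the roughness of $f$ and is still not a $\cC^{\lceil q\rceil}$-test function, so ``pair each block with $g$ using the definition of $\cC^{-q}$'' runs into the same problem: the definition in the paper tests against $\cC^{\lceil q\rceil}$ functions, and decomposing $\varphi_x^\lambda$ alone does not change the regularity of $f$.

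What is actually needed is the fact that a $\cC^{-q}$ distribution can be tested against $\cC^{q'}$-H\"older bumps for any $q'\in(q,1]$, with the natural $\lambda^{-q}$ scaling — equivalently, a Besov/Sobolev duality at fractional order strictly between $q$ and $p$. With that in hand the lemma is almost immediate (no mollification or LP splitting of $\varphi$ is even required: $(f-f(x))\varphi_x^\lambda$ rescaled to unit scale has $\cC^{q'}$-norm $\lesssim\lambda^p$, giving $\lambda^{p-q}$ directly). The paper's proof is exactly this, packaged as the duality $|\scal{g,\psi}|\le\|g\|_{\cC^{-q}}\|\psi\|_{W^{q,1+\vartheta}}$ followed by the fractional Leibniz rule to bound $\|(f-f(x))\varphi_x^\lambda\|_{W^{q,1+\vartheta}}$; the small $\vartheta>0$ is what costs the $\eta$. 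You do gesture at this at the very end (``phrase the whole argument in terms of wavelet/Littlewood--Paley characterisations \dots the standard fact that the resonant product makes sense when $p-q>0$''), but it is left as a remark rather than the argument, and the two concrete routes you actually set up do not contain it. Making the fractional-order testing explicit is the whole content of the lemma, so it needs to be the centerpiece, not a fallback.
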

\begin{proof}
    Fix $\eta>0$, and let $\vartheta>0$ be sufficiently small depending on $\eta$ (and to be specified later). By duality, we have
    \begin{equation*}
        \big|\scal{g, \big(f - f(x)\big) \varphi_x^\lambda}\big| \leq \|g\|_{\cC^{-q}} \|\big(f - f(x) \big) \varphi_x^\lambda\|_{W^{q,1+\vartheta}}\;.
    \end{equation*}
    Since $f-f(x)$ is paired with $\varphi_x^\lambda$, we may assume without loss of generality that $f$ is supported within radius $\lambda$ from $x$. By the fractional Leibniz rule (see \cite[Theorem 1]{frac_Leibniz}), for any $s_1, s_2$ with $\frac{1}{s_1} + \frac{1}{s_2} = \frac{1}{1+\vartheta}$, we have
    \begin{equation*}
    \begin{split}
        \|\big(f - f(x) \big) \varphi_x^\lambda\|_{W^{q,1+\vartheta}} &\lesssim \|(f - f(x))\|_{W^{q,s_1}} \|\varphi_x^\lambda\|_{L^{s_2}} + \|(f-f(x))\|_{L^{s_1}} \|\varphi_x^\lambda\|_{W^{q,s_2}}\;\\
        &\lesssim \lambda^{p-q-\frac{|\fs| \vartheta}{1 + \vartheta}}\;.
    \end{split}
    \end{equation*}
    We then choose $\vartheta$ such that $\frac{|\fs| \vartheta}{1 + \vartheta} = \eta$. This completes the proof. 
\end{proof}

    The following lemma is also deterministic, and we omit the proof.
    \begin{lem} \label{lem:kernel_holder}
        Assume $-1<p<0$, $f\in\cC^p$ and $K(x,y)$ is a kernel with singularity $|\fs|-1$ and renormalisation constant $r_e=1$. Then for $g(x):= \int K(x,y)f(y) dy$, we have the bound
        \begin{equation*}
            |\scal{g,\varphi^\lambda}| \lesssim \lambda^{p+1} \| f \|_{\cC^p}
        \end{equation*}
        uniformly over $\varphi\in C_0$ and $\lambda\in(0,1)$.
    \end{lem}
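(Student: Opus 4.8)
The plan is to decompose $K$ dyadically and exploit the renormalisation encoded by $r_e = 1$. Since $r_e = 1$ the subtraction in \eqref{e:kernel} reduces to the single term $K_0(-y)$, so that $K(x,y) = K_0(x-y) - K_0(-y)$; in particular $K(0,y) = 0$, hence $g(0) = 0$, which is precisely the reason the bound can carry $\lambda^{p+1}$ rather than only $\lambda^0$. I would write $K_0 = \sum_{n \ge 0} K_0^{(n)}$ with $K_0^{(n)}$ smooth, supported in $\{|z| \sim 2^{-n}\}$, and satisfying $|D^k K_0^{(n)}| \lesssim 2^{n(|\fs|-1+|k|)}$ (consistent with \eqref{e:kernel_assumption}); correspondingly $g = \sum_n g^{(n)}$ with $g^{(n)}(x) = \scal{f, K_0^{(n)}(x-\cdot) - K_0^{(n)}(-\cdot)}$. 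The point is then to estimate $\scal{g^{(n)}, \varphi^\lambda}$ for each $n$ and sum, the threshold $2^{-n} \sim \lambda$ separating two regimes.

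First, since for fixed $x$ the function $K_0^{(n)}(x-\cdot)$ is a constant multiple $\sim 2^{-n}$ of an admissible test function at scale $2^{-n}$, testing $f \in \cC^p$ against it (and likewise against $K_0^{(n)}(-\cdot)$) gives $|g^{(n)}(x)| \lesssim \|f\|_{\cC^p}\,2^{-n(1+p)}$ uniformly in $x$, hence $|\scal{g^{(n)},\varphi^\lambda}| \le \|g^{(n)}\|_{L^\infty}\|\varphi^\lambda\|_{L^1} \lesssim \|f\|_{\cC^p}\,2^{-n(1+p)}$ using $\|\varphi^\lambda\|_{L^1} = \|\varphi\|_{L^1} \lesssim 1$ for $\varphi \in C_0$; summing over the fine scales $2^{-n} \le \lambda$ gives $\lesssim \|f\|_{\cC^p}\lambda^{1+p}$ since $1+p > 0$. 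For the coarse scales $2^{-n} > \lambda$ I would swap the order of integration, $\scal{g^{(n)},\varphi^\lambda} = \scal{f, \Theta_n}$ with $\Theta_n(y) := \int \big(K_0^{(n)}(x-y) - K_0^{(n)}(-y)\big)\varphi^\lambda(x)\,dx$. Here the subtraction produces a genuine first-order cancellation in $x$: since $|x| \lesssim \lambda \ll 2^{-n}$ on the relevant annulus $|y| \sim 2^{-n}$, a first-order Taylor expansion yields $|\Theta_n(y)| \lesssim \lambda\,2^{n|\fs|}\,\mathbf{1}_{|y| \sim 2^{-n}}$ with matching bounds on the $y$-derivatives (one differentiates under the integral and re-applies the same expansion with $\nabla K_0^{(n)}$ in place of $K_0^{(n)}$), so $\Theta_n$ is $\lesssim \lambda$ times an admissible test function at scale $2^{-n}$; therefore $|\scal{f,\Theta_n}| \lesssim \lambda\,\|f\|_{\cC^p}\,2^{-np}$, and summing over $2^{-n} > \lambda$ gives $\lesssim \lambda\,\|f\|_{\cC^p}\,\lambda^{p} = \|f\|_{\cC^p}\lambda^{1+p}$ since $p < 0$. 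Adding the two regimes gives the claim, uniformly in $\lambda \in (0,1)$ and $\varphi \in C_0$.

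The routine inputs (the dyadic bounds on $K_0^{(n)}$, the $L^1$-normalisation of $\varphi^\lambda$, and the two geometric summations, each dominated by the crossover scale $2^{-n}\sim\lambda$) are standard; the step requiring care — and the one that genuinely uses $r_e = 1$ rather than merely $r_e \ge 0$ — is the coarse-scale estimate: one must keep the renormalised combination $K_0^{(n)}(x-\cdot) - K_0^{(n)}(-\cdot)$ together, extract exactly one power of $|x| \lesssim \lambda$ from its Taylor cancellation, and verify that $\Theta_n$, suitably rescaled, is supported in a ball of radius $\lesssim 2^{-n}$ with $\cC^1$-norm controlled as required for the pairing with a $\cC^p$ distribution. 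Alternatively, this whole argument can be replaced by invoking the classical Schauder estimate — convolution with a kernel of singularity $|\fs|-1$ maps $\cC^p$ into $\cC^{p+1}$ locally for $p \in (-1,0)$ — after which one only needs $g(0)=0$ together with $p+1 \in (0,1)$ to conclude $|\scal{g,\varphi^\lambda}| = |\int (g(x)-g(0))\varphi^\lambda(x)\,dx| \lesssim \lambda^{p+1}\|f\|_{\cC^p}$.
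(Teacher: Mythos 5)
The paper declares this lemma ``deterministic'' and omits its proof, so there is no in-paper argument to compare against; the question is simply whether your argument is correct, and it is. Both routes you sketch work. In the dyadic version, the two geometric sums are dominated at the crossover scale $2^{-n}\sim\lambda$ exactly as you compute (using $1+p>0$ on the fine side and $p<0$ on the coarse side), and the single Taylor cancellation in $x$ that gives the extra factor $\lambda$ on the coarse side is precisely what $r_e=1$ buys; the one small point worth making explicit is that $\Theta_n$ is supported in an \emph{annulus} $|y|\sim 2^{-n}$ rather than a ball, so one should cover it by an $O(1)$ number of balls of radius $\sim 2^{-n}$ before invoking the $\cC^p$ pairing bound, and similarly that the $\cC^1$ (not merely $L^\infty$) control on the rescaled $\Theta_n$ is what is actually needed since $\lceil -p\rceil = 1$ --- you do address both of these, but in passing. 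The Schauder alternative is arguably cleaner here: with $r_e=1$ the renormalised kernel gives $g(x) = (K_0*f)(x)-(K_0*f)(0)$, the local Schauder estimate puts $K_0*f$ in $\cC^{p+1}$ with $p+1\in(0,1)$ non-integer, and then $g(0)=0$ together with $|g(x)|\lesssim \|f\|_{\cC^p}|x|^{p+1}$ gives the claim directly after integrating against $\varphi^\lambda$. Either argument would serve as the omitted proof.
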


    Recall that the process $\<2'1'1'>_\eps$ is given by
    \begin{equation*}
        \<2'1'1'>_\eps = \<2'1'0>_\eps \cdot \<1'>_\eps - C_{\eps}^{\<2'1'1's>}\;,
    \end{equation*}
    where
    \begin{equation} \label{e:210}
        \<2'1'0>_\eps (x) = \int K(x,y) \; \<2'1'>_\eps (y) dy \quad\text{and}\quad \<1'>_{\eps}(x) = \frac{1}{2a\sqrt{\eps}} F'(\sqrt{\eps}\Psi_{\eps}(x))\;,
    \end{equation}
    $K(x,y) = K_0(x-y) - K_0(-y)$, and $\<2'1'>_\eps$ is defined in \eqref{e:21}. Let $\<2'1'0>_\eps^{(\delta)}$ be the same in \eqref{e:210} except that $\<2'1'>_\eps$ is replaced by $\<2'1'>_\eps^{(\delta)}$ in the definition. Similarly, write $\<1'>_\eps^{(\delta)}$ for $\<1'>_\eps$ with $F'$ replaced by $F'_\delta$. The process $\<2'1'1'>_\eps$ can then be decomposed into three parts as
    \begin{equation*}
        \<2'1'1'>_\eps = \big( \<2'1'0>_\eps^{(\delta)} \cdot \<1'>_\eps^{(\delta)} - C_{\eps}^{\<2'1'1's>} \big) \; + \; \big( \<2'1'0>_\eps - \<2'1'0>_\eps^{(\delta)} \big) \cdot \<1'>_\eps \; + \; \<2'1'0>_\eps^{(\delta)} \; \big( \<1'>_\eps - \<1'>_\eps^{(\delta)} \big)\;.
    \end{equation*}
    In \cite[Section~5.2.2]{HX19}, the authors showed that there exists $\nu>0$ such that for $\delta = \eps^{\nu}$, one has the convergence
    \begin{equation*}
        \<2'1'0>_\eps^{(\delta)} \cdot \<1'>_\eps^{(\delta)} - C_{\eps}^{\<2'1'1's>} \rightarrow \<2'1'1'>
    \end{equation*}
    in probability in $\cC^{-\kappa}$. It remains to show the convergence in $\cC^{-\kappa}$ of the other two terms to $0$. This is where we use Lemma~\ref{lem:prod_holder} and the extra smallness from $\eps$ to turn the just-below-threshold ill-posed product into an analytically well-posed one. We give details for $\<2'1'0>_\eps^{(\delta)} \; \big( \<1'>_\eps - \<1'>_\eps^{(\delta)} \big)$, and the treatment for the other term is essentially the same. 

    In Section~\ref{sec:spde_21}, we have shown that for every $\zeta > 0$ and $n \in \N$, we have
    \begin{equation*}
        \big( \E \|\<2'1'>_\eps^{(\delta)}\|_{\cC^{-\frac{1}{2}-\zeta}}^{2n} \big)^{\frac{1}{2n}} \lesssim 1
    \end{equation*}
    for $\delta = \eps^\nu$, and uniformly in $\eps$. As a consequence of this bound and the regularising property of the kernel $K$, by Lemma \ref{lem:kernel_holder} we have
    \begin{equation}
    \label{e:210_bound}
        \big( \E \|\<2'1'0>_\eps^{(\delta)}\|_{\cC^{\frac{1}{2}-\zeta}}^{2n} \big)^{\frac{1}{2n}} \lesssim 1\;.
    \end{equation}
    We need the following lemma to control $\<1'>_\eps - \<1'>_\eps^{(\delta)}$ in a regularity space better than $-\frac{1}{2}$. 

    \begin{lem} \label{lem:J}
        There exists $\zeta'>0$ such that
        \begin{equ}
            (\E| \scal{\<1'>_\eps - \<1'>_\eps^{(\delta)}, \varphi^{\lambda}}|^{2n})^{\frac{1}{2n}} \lesssim_{n} \eps^{\zeta'} \lambda^{-\frac{1}{2} + \zeta'}.
        \end{equ}
        The proportionality constant is independent of $\eps,\lambda\in(0,1)$ and $\varphi\in C_0$.
    \end{lem}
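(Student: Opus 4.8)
The plan is to reproduce, for a single frequency and with the trivial kernel, the argument used for the object $\<2'1'>$ in Section~\ref{sec:spde_21}. Since $F$ is even, $F'$ is odd, so $F'(\sqrt\eps\Psi_\eps)$ has no zeroth chaos component and $\<1'>_\eps-\<1'>_\eps^{(\delta)}=\frac{1}{2a\sqrt\eps}\bigl(F'-F'_\delta\bigr)(\sqrt\eps\Psi_\eps)$ with $\delta=\eps^{\nu}$. Writing $X=\sqrt\eps\Psi_\eps(x)$ and Fourier expanding $F'-F'_\delta$, one gets
\[
\scal{\<1'>_\eps-\<1'>_\eps^{(\delta)},\varphi^{\lambda}}=(2a\sqrt\eps)^{-1}\,\scal{\widehat{F'}-\widehat{F'_\delta},\ \Phi_{\eps,\lambda}}_{\theta},\qquad \Phi_{\eps,\lambda}(\theta):=\int\varphi^{\lambda}(x)\,\tT_{(0)}\!\bigl(e^{i\theta X}\bigr)\,dx,
\]
where the pairing is in the frequency variable $\theta$ and the zeroth chaos of $e^{i\theta X}$, being even in $\theta$, drops out against the odd distribution $\widehat{F'}-\widehat{F'_\delta}$.

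I would then estimate the two factors separately, exactly as in Section~\ref{sec:spde_21}. By Lemmas~\ref{lem:local_decompose} and~\ref{lem:exchange} (applied with $N=1$),
\[
\bigl\|\scal{\<1'>_\eps-\<1'>_\eps^{(\delta)},\varphi^{\lambda}}\bigr\|_{2n}\lesssim\frac{1}{\sqrt\eps}\sum_{K\in\Z}\bigl\|\widehat{F'}-\widehat{F'_\delta}\bigr\|_{M+2,\fR_{K}}\ \sup_{r\le M+3}\ \sup_{\theta\in\fR_{K}}\bigl\|\d_{\theta}^{r}\Phi_{\eps,\lambda}(\theta)\bigr\|_{2n}.
\]
For the Fourier factor, Lemma~\ref{lem:local_bound} with $\l=(1)$ and $\omega=\tfrac{\beta}{2}$ gives $\|\widehat{F'}-\widehat{F'_\delta}\|_{M+2,\fR_{K}}\lesssim\delta^{\beta/2}(1+|K|)^{-1-\beta/2}$, whose $K$-sum converges and produces the gain $\delta^{\beta/2}=\eps^{\nu\beta/2}$. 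For the stochastic factor I would invoke the frequency-independent one-frequency bound of \cite{Xu18} (whose clustering argument, at clustering distance $L\eps$, also controls $\theta$-derivatives), specialised to $m_1=1$, $\alpha=1$: for every $\eta'>0$ and $r,n\in\N$,
\[
\sup_{\theta\in\R}\bigl\|\d_{\theta}^{r}\Phi_{\eps,\lambda}(\theta)\bigr\|_{2n}\lesssim\eps^{\frac12}\lambda^{-\frac12-\eta'},
\]
uniformly over $\eps,\lambda\in(0,1)$. The decisive feature is the power $\eps^{1/2}$, which comes from the dominant first chaos $\tT_{(0)}(e^{i\theta X})\sim i\theta X=i\theta\sqrt\eps\Psi_\eps$: on the well-separated configurations $\vec x\in\sS_{2n}$ the clustering step produces a Gaussian decay in $\theta$ that absorbs all polynomial growth and leaves $\|\int\varphi^{\lambda}X\|_2^{2n}\lesssim(\sqrt\eps\lambda^{-1/2})^{2n}$, while on $\sS_{2n}^{c}$ one uses $|\tT_{(0)}(e^{i\theta X})|\le2$ together with the volume bound \eqref{e:no_singleton_volume}; this $\eps^{1/2}$ exactly cancels the prefactor $\eps^{-1/2}$.

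Combining the two estimates gives $\|\scal{\<1'>_\eps-\<1'>_\eps^{(\delta)},\varphi^{\lambda}}\|_{2n}\lesssim\eps^{\nu\beta/2}\lambda^{-1/2-\eta'}$; when $\eps\le\lambda$, splitting $\eps^{\nu\beta/2}=\eps^{\zeta'}\eps^{\nu\beta/2-\zeta'}$ and using $\eps\le\lambda\le1$ on the second factor turns this into $\eps^{\zeta'}\lambda^{-1/2+\zeta'}$, provided $\eta'$ and $\zeta'$ are small enough (e.g.\ $\eta'=\tfrac{\nu\beta}{4}$, $\zeta'=\tfrac{\nu\beta}{16}$). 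For $\eps>\lambda$ the target $\lambda^{-1/2+\zeta'}$ is $\ge1$, and here the elementary pointwise estimate $|\<1'>_\eps(x)-\<1'>_\eps^{(\delta)}(x)|\lesssim\eps^{-1/2}\delta^{\beta}(1+|\sqrt\eps\Psi_\eps(x)|)^{M}$, combined with the $\eps$-uniformly bounded variance of $\sqrt\eps\Psi_\eps$ (here $\alpha=1$), already yields $\|\scal{\<1'>_\eps-\<1'>_\eps^{(\delta)},\varphi^{\lambda}}\|_{2n}\lesssim\eps^{\nu\beta-1/2}$, which is $\lesssim\eps^{\zeta'}\lambda^{-1/2+\zeta'}$ once $\zeta'\le\tfrac{\nu\beta}{2}$ (using $\lambda<\eps<1$); the two regimes together give the lemma. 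I expect the main obstacle to be the frequency-independent, per-$\theta$ control of $\Phi_{\eps,\lambda}$ and its $\theta$-derivatives: since $F\in\cC_{\pP}^{2,\beta}$ only forces $\|\widehat{F'}\|_{M+2,\fR_{K}}\lesssim(1+|K|)^{-1-\beta}$ with $\widehat{F'}$ a genuine tempered distribution, any bound on the stochastic factor with polynomial growth in $\theta$ would make the $K$-sum diverge the moment $\omega>0$, so it is precisely the frequency-independence of the clustering estimate of \cite{Xu18} (extended to two frequencies in Theorem~\ref{thm:main_result}) together with the smallness $\delta^{\omega}$ that makes the argument close.
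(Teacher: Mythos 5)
Your proposal is correct and follows essentially the same two-regime structure as the paper's proof: for $\eps\le\lambda$ you pair $\widehat{F'}-\widehat{F'_\delta}$ against a one-frequency stochastic functional and invoke the frequency-independent bound of \cite{Xu18}, gaining $\delta^{\omega}$ from Lemma~\ref{lem:local_bound} and $\eps^{1/2}\lambda^{-1/2-\eta'}$ from the clustering argument, which matches the paper's use of ``\cite[Section~3]{Xu18}''; for $\eps>\lambda$ your direct pointwise estimate $|F'-F'_\delta|\lesssim\delta^{\beta}(1+|u|)^{M}$ replaces the paper's invocation of Lemma~\ref{lem:local_decompose} with the trivial bound $\|\d_\theta^r\bB_{\eps,\lambda}\fF\|_{2n}\lesssim1$, but both produce a positive power of $\eps$ that, combined with $\lambda<\eps$, gives the target $\eps^{\zeta'}\lambda^{-1/2+\zeta'}$. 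The only minor stylistic difference is your use of $e^{i\theta X}$ with a parity argument where the paper writes $\sin(\theta X)$ directly; the content is the same.
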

    \begin{proof}
        First we consider the case $\eps\leq\lambda$. By \cite[Section~3]{Xu18}, we have
        \begin{equ}
            \|\scal{\<1'>_\eps - \<1'>_\eps^{(\delta)}, \varphi^{\lambda}}\|_{2n} \lesssim \sum_{K\in\Z} \|\widehat{F'}-\widehat{F'_{\delta}}\|_{M+2,\fR_{K}} \lambda^{-\frac{1}{2}-\eta}
        \end{equ}
        for any small $\eta>0$. Using Lemma~\ref{lem:local_bound} to control $\widehat{F'}-\widehat{F'_{\delta}}$, we get
        \begin{equ}
            \|\scal{\<1'>_\eps - \<1'>_\eps^{(\delta)}, \varphi^{\lambda}}\|_{2n} \lesssim \sqrt{\delta} \lambda^{-\frac{1}{2}-\eta}.
        \end{equ}
        Recall that $\delta=\eps^{\nu}$. If we choose $\eta = \frac{1}{6}\nu$, then the right hand side becomes $\eps^{\frac{\nu}{6}}\lambda^{-\frac{1}{2}+\frac{\nu}{6}}$ since $\eps\leq\lambda$. Now we turn to the case $\lambda\leq\eps$. By Lemma \ref{lem:local_decompose}, we have the bound
        \begin{equ}
            \|\scal{\<1'>_\eps - \<1'>_\eps^{(\delta)}, \varphi^{\lambda}}\|_{2n} \lesssim \eps^{-\frac{1}{2}} \sum_{K\in\Z} \|\widehat{F'}-\widehat{F'_{\delta}}\|_{M+2,\fR_{K}} \sup_{r\leq M+2} \sup_{\theta\in\fR_{K}} \|\d_{\theta}^{r} \bB_{\eps,\lambda} \fF(\theta)\|_{2n}\;,
        \end{equ}
        where $\bB_{\eps,\lambda} \fF(\theta)$ is defined by
        \begin{equ}
            \bB_{\eps,\lambda} \fF(\theta) = \int \sin (\theta X) \varphi^{\lambda}(x) dx\;.
        \end{equ}
        By using Lemma~\ref{lem:local_bound} to control $\widehat{F'}-\widehat{F'_{\delta}}$, the fact 
        \begin{equ}
            \|\d_{\theta}^{r} \bB_{\eps,\lambda} \fF(\theta)\|_{2n} \lesssim 1
        \end{equ}
        and the condition $\lambda\leq\eps$, we again obtain the same bound.
    \end{proof}
    
    Now we are ready to give the bound for $\<2'1'0>_\eps^{(\delta)} \big( \<1'>_\eps - \<1'>_\eps^{(\delta)} \big)$.
    
    \begin{prop} \label{prop:211_remainder}
        There exists a constant $\zeta>0$ such that
        \begin{equation*}
        \big( \E|\scal{\<1'>_\eps - \<1'>_\eps^{(\delta)}, \<2'1'0>_\eps^{(\delta)} \varphi^{\lambda}}|^{2n} \big)^{\frac{1}{2n}} \lesssim_n \eps^{\zeta}
        \end{equation*}
        holds uniformly over $\eps,\lambda\in(0,1)$ and $\varphi\in\cC_0$.
    \end{prop}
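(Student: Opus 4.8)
The strategy is to treat $\scal{\<1'>_\eps - \<1'>_\eps^{(\delta)},\,\<2'1'0>_\eps^{(\delta)}\varphi^\lambda}$ as a just-critical paraproduct and to extract smallness from two independent sources. Write $f := \<2'1'0>_\eps^{(\delta)}$ and $g := \<1'>_\eps - \<1'>_\eps^{(\delta)}$. The first source is the kernel structure: the kernel in \eqref{e:210} satisfies $K(0,y) = K_0(-y) - K_0(-y) = 0$ (this is precisely the role of the renormalisation $r_e = 1$), hence $f(0) = \int K(0,y)\,\<2'1'>_\eps^{(\delta)}(y)\,dy = 0$; moreover \eqref{e:210_bound} gives $\big(\E\|f\|_{\cC^{1/2-\zeta}}^{2m}\big)^{1/2m}\lesssim 1$ uniformly in $\eps$, for every small $\zeta>0$ and every $m$. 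The second source is Lemma~\ref{lem:J}: it tells us that $g$ is small at all scales, $\|\scal{g,\varphi^\lambda_x}\|_{2m}\lesssim\eps^{\zeta'}\lambda^{-1/2+\zeta'}$, uniformly over $x$ (by stationarity of $\Psi_\eps$), over $\lambda$ and over $m$, for a \emph{fixed} $\zeta'>0$. In particular $g$ is strictly better than $\cC^{-1/2}$, while $\zeta$ remains at our disposal.

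I would first convert Lemma~\ref{lem:J} into a H\"older-norm bound via the Kolmogorov criterion for negative H\"older spaces: since the pairing bound holds for all even moments, for every small $\eta>0$ and every $m$ one gets
\begin{equation*}
    \big(\E\,\|\<1'>_\eps - \<1'>_\eps^{(\delta)}\|_{\cC^{-1/2+\zeta'-\eta}}^{2m}\big)^{1/2m}\lesssim\eps^{\zeta'},
\end{equation*}
the prefactor $\eps^{\zeta'}$ being preserved because the dyadic summation in that criterion is $\eps$-independent. Then, using $f(0)=0$ to write $\scal{gf,\varphi^\lambda}=\scal{g,(f-f(0))\varphi^\lambda_0}$, I would invoke Lemma~\ref{lem:prod_holder} with $p=1/2-\zeta$ and $q=1/2-\zeta'+\eta$; this is legitimate as soon as $p>q$, that is, $\zeta+\eta<\zeta'$, which holds once $\zeta$ and $\eta$ are small relative to the fixed $\zeta'$. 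The lemma then gives the pathwise estimate
\begin{equation*}
    |\scal{gf,\varphi^\lambda}|\lesssim\|f\|_{\cC^{1/2-\zeta}}\;\|\<1'>_\eps - \<1'>_\eps^{(\delta)}\|_{\cC^{-1/2+\zeta'-\eta}}\;\lambda^{\,\zeta'-\zeta-2\eta},
\end{equation*}
with a strictly positive power of $\lambda$ (take for instance $\zeta=\eta=\zeta'/8$).

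Finally I would raise this to the $2n$-th power, decouple $f$ from $g$ by Cauchy--Schwarz — note $f$ and $g$ are both functionals of $\Psi_\eps$, hence \emph{not} independent, so one cannot merely multiply expectations — and insert the two a priori bounds, $\big(\E\|f\|_{\cC^{1/2-\zeta}}^{4n}\big)^{1/4n}\lesssim 1$ from \eqref{e:210_bound} and $\big(\E\|\<1'>_\eps - \<1'>_\eps^{(\delta)}\|_{\cC^{-1/2+\zeta'-\eta}}^{4n}\big)^{1/4n}\lesssim\eps^{\zeta'}$ from the Kolmogorov step. Since $\lambda\le1$ and the $\lambda$-exponent is positive, this yields $\big(\E|\scal{gf,\varphi^\lambda}|^{2n}\big)^{1/2n}\lesssim\eps^{\zeta'}$ uniformly in $\lambda$ and in the admissible test functions, so the proposition holds with $\zeta:=\zeta'$. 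The companion term $(\<2'1'0>_\eps - \<2'1'0>_\eps^{(\delta)})\,\<1'>_\eps$ appearing in the decomposition of $\<2'1'1'>_\eps$ is handled along the same lines, using that this difference too vanishes at the origin and has small $\cC^{1/2-}$ norm by Section~\ref{sec:spde_21} (there one only needs, and only obtains, the weaker $\cC^{-\kappa}$-type bound, which suffices for the convergence of $\<2'1'1'>_\eps$).

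The point that calls for care — rather than a genuine obstacle — is that the product at stake sits exactly at homogeneity zero ($\<1'>\cdot\<2'1'0>$ is ill-posed in the limit), so the whole argument hinges on simultaneously gaining a positive power of $\lambda$ and a positive power of $\eps$. One therefore has to balance the three infinitesimal losses (the $\zeta$ in \eqref{e:210_bound}, the $\eta$ of the Kolmogorov upgrade, and the loss built into Lemma~\ref{lem:prod_holder}) against the single fixed gain $\zeta'$ of Lemma~\ref{lem:J}, ensuring that $p>q$ \emph{and} $p-q>0$ (up to $\eta$) can be achieved together; since $\zeta'>0$ is fixed and the losses are under our control, this is always possible. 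Conceptually the mechanism is simply that $\<1'>_\eps - \<1'>_\eps^{(\delta)}$ is both strictly better than $\cC^{-1/2}$ and of size $\eps^{\zeta'}$, while the vanishing of $\<2'1'0>_\eps^{(\delta)}$ at the test point converts the critical product into a genuinely convergent, $\eps$-small quantity.
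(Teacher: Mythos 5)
Your proposal is correct and follows essentially the same route as the paper's proof: both invoke \eqref{e:210_bound} and Lemma~\ref{lem:J} (upgraded by a Kolmogorov argument) for the two a priori bounds, apply Lemma~\ref{lem:prod_holder} pathwise, and then decouple the two factors by Cauchy--Schwarz. You are somewhat more explicit than the paper in noting that $\<2'1'0>_\eps^{(\delta)}(0)=0$ (from $K(0,y)=0$ with $r_e=1$), which is the fact that reduces $\scal{gf,\varphi^\lambda}$ to the form $\scal{g,(f-f(0))\varphi^\lambda_0}$ controlled by Lemma~\ref{lem:prod_holder}; the paper uses this same mechanism implicitly.
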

    \begin{proof}
        According to \eqref{e:210_bound}, for $\delta = \eps^\nu$, the bound
        \begin{equation*}
            \E \big( \| \<2'1'0>_\eps^{(\delta)} \|_{\cC^{ \frac{1}{2} - \frac{\nu}{20}}}^{4n}\big)^{\frac{1}{4n}} \lesssim 1  
        \end{equation*}
        holds uniformly over $\eps\in(0,1)$. And Lemma~\ref{lem:J} implies the Kolmogorov type bound
        \begin{equation*}
            \E \big( \| \<1'>_\eps - \<1'>_\eps^{(\delta)}\|_{\cC^{ -\frac{1}{2} + \frac{\nu}{10}}}^{4n}\big)^{\frac{1}{4n}} \lesssim \eps^{\frac{\nu}{10}}  
        \end{equation*}
        uniformly over $\eps\in(0,1)$.
        By Lemma~\ref{lem:prod_holder} we have
        \begin{equation*}
        \big( \E|\scal{\<1'>_\eps - \<1'>_\eps^{(\delta)}, \<2'1'0>_\eps^{(\delta)}  \varphi^{\lambda}}|^{2n} \big)^{\frac{1}{2n}} \lesssim \lambda^{\frac{\nu}{30}} \left( \E(\| \<1'>_\eps - \<1'>_\eps^{(\delta)}\|_{\cC^{ -\frac{1}{2} + \frac{\nu}{10}}} \| \<2'1'0>_\eps^{(\delta)}\|_{\cC^{ \frac{1}{2} - \frac{\nu}{20}}} )^{2n} \right)^{\frac{1}{2n}}
        \end{equation*}
        Then the conclusion holds by H\"older inequality.
    \end{proof}

    The bound for the remaining term $\big( \<2'1'0>_\eps - \<2'1'0>_\eps^{(\delta)} \big) \cdot \<1'>_\eps$ can be obtained in essentially the same way. Thus, we obtain the convergence of $\<2'1'1'>_\eps$ to $\<2'1'1'>$.

    \subsection{Dynamical \texorpdfstring{$\Phi^4_3$}{F} -- proof of Theorem~\ref{thm:phi43}}

	First we introduce the regularity structure in the dynamical $\Phi_3^4$ model. Let $\iI$ denote the abstract integration map corresponding to the heat kernel. We then use graphical notations by setting
	\begin{equ}
		\<3'0> = \iI(\<3'>)\;, \quad \<2'2'> = \<2'>\cdot\iI(\<2'>)\;, \quad  \<3'1'> = \<1'>\cdot\<3'0>\;,\quad  \<3'2'> = \<2'>\cdot\<3'0>\;. 
	\end{equ}
	We choose a proper coupling constant $a$ and define $\Psi_{\eps} := P * \xi_{\eps}$ where $P$ denotes the heat kernel and $\xi_\eps$ is given in \eqref{e:phi43_macro}. For every $\eps>0$, we 
	define a representation $\Pi^\eps$ of the regularity structure by
	\begin{equation*}
		\begin{split}
			(\Pi^{\eps} \<0'>)(z) &= \frac{1}{6 a} G^{(3)}\big(\eps^{\frac{1}{2}} \Psi_\eps(z)\big) - 1, \quad (\PPi^{\eps} \<1'>)(z) = \frac{1}{6 a \sqrt{\eps}} G''\big(\eps^{\frac{1}{2}} \Psi_\eps(z)\big), \\
			(\Pi^{\eps} \<2'>)(z) &= \frac{1}{3 a \eps} G'\big(\eps^{\frac{1}{2}} \Psi_\eps(z)\big) - C_{\<2's>}^{(\eps)}, \\
           (\Pi^{\eps} \<3'>)(z) &= \frac{1}{a \eps^{\frac{3}{2}}} G\big(\eps^{\frac{1}{2}} \Psi_\eps(z)\big) - 3C_{\<2's>}^{(\eps)}\Psi_\eps(z),
		\end{split}
	\end{equation*}
	where the constant $C_{\<2's>}^{(\eps)}$ is chosen to satisfy $\E(\Pi^{\eps} \<2'>)(z) = 0$. And we set $\hPi^{\eps} \tau = \Pi^{\eps} \tau$ for $\tau \in \{\<0'>,\<1'>,\<2'>,\<3'>\}$ and
	\begin{equation} \label{e:model_2}
		\begin{split}
			(\hPi^{\eps} \<2'2'>)(z) &= (\hPi^{\eps} \<2'0>)(z)\cdot (\hPi^{\eps} \<2'>)(z) - C_{\<2'2's>}^{(\eps)}\;, \\
           (\hPi^{\eps} \<3'1'>)(z) &= (\hPi^{\eps} \<3'0>)(z) \cdot (\hPi^{\eps} \<1'>)(z) - C_{\<3'1's>}^{(\eps)}\;, \\
			(\hPi^{\eps} \<3'2'>)(z) &= (\hPi^{\eps} \<3'0>)(z) \cdot (\hPi^{\eps} \<2'>)(z) - ( 3C_{\<2'2's>}^{(\eps)} + 2C_{\<3'1's>}^{(\eps)} )\Psi_\eps(z)\;,
		\end{split}
	\end{equation}
	where the constants $C_{\tau}^{(\eps)}$ are chosen to satisfy $\E (\hPi^{\eps} \tau)(0) = 0$ for all objects $\tau$ appearing in \eqref{e:model_2}. As before, we list all the symbols with their corresponding regularities.
	\begin{equation}
	\label{e:table_phi43}
	\begin{array}{ *{8}{c}}
	    \toprule
		\text{object} \; (\tau) &\<0'>\; &\<1'>\; &\<2'>\; &\<3'0>\; & \<3'1'>\; &\<2'2'>\; &\<3'2'>\\
		\midrule
		\text{reg.} \; (|\tau|) & 0- & -\frac{1}{2}- & -1- & \frac{1}{2}- & 0- & 0- & -\frac{1}{2}- \\
		\bottomrule
	\end{array}
    \end{equation}
    
    \begin{thm}
    \label{thm:phi4_sto_convergence}
    Let $\hPi^{\eps}$ be the renormalised model given above and let $\Pi^{\Phi}$ be the standard dynamical $\Phi_3^4$ model given in \cite[Section~10.5]{rs_theory}. Then there exists $\zeta>0$ such that for every symbol $\tau$ in Table~\eqref{e:table_phi43} with $|\tau|<0$, we have
    \begin{equation}
        \label{e:phi4_main_bound}
        \big( \E| \scal{\hPi_z^{\eps}\tau-\Pi_z^{\Phi}\tau,\varphi^{\lambda}}|^{2n}\big)^{\frac{1}{2n}} \lesssim_n \eps^{\zeta} \lambda^{|\tau|+\zeta},
    \end{equation}
    where the bound holds uniformly in $\eps\in(0,1)$, $\lambda\in(0,1)$, $z\in\R^{+}\times\T^3$ and smooth function $\varphi$ compactly supported in $\{|x|<1\}$. As a consequence, $\hPi^{\eps}$ converges to $\Pi^{\Phi}$ in probability in the space of modelled distributions.
\end{thm}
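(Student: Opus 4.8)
The plan is to reduce, via \cite[Proposition~6.3]{HQ} and the remark following Theorem~\ref{thm:sto_convergence}, to proving \eqref{e:phi4_main_bound} for each symbol $\tau$ in Table~\eqref{e:table_phi43} with $|\tau|<0$ (and, by stationarity, at $z=0$), mirroring the KPZ argument of Sections~\ref{sec:spde_0}--\ref{sec:reg_0} with \cite{HX19} replaced by \cite{Phi4_poly,Phi4_general}. Throughout I fix a mollification scale $\delta=\eps^{\nu}$ for a small $\nu>0$, write $G^{(\ell)}_\delta=G^{(\ell)}*\varrho_\delta$, and for each object let $\tau^{(\delta)}_\eps$ be $\tau_\eps$ with every occurrence of $G$ and its derivatives replaced by their mollified versions. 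The decomposition is $\tau_\eps=\tau^{(\delta)}_\eps+(\tau_\eps-\tau^{(\delta)}_\eps)$: for a suitable $\nu$ the ``nice part'' $\tau^{(\delta)}_\eps$ converges in probability to $\Pi^{\Phi}_0\tau$ in the regularity dictated by Table~\eqref{e:table_phi43}, by the argument of \cite{Phi4_poly,Phi4_general} which uses only smoothness of $G_\delta$ (its $\cC^k$ norms, which grow like $\delta^{-N}$, being absorbed by a small power of $\eps$). It then remains to bound each remainder $\tau_\eps-\tau^{(\delta)}_\eps$ by $\eps^{\zeta'}\lambda^{|\tau|+\zeta'}$ in $\|\cdot\|_{2n}$ for some $\zeta'>0$.

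For the one-frequency objects $\<0'>$, $\<1'>$, $\<2'>$ and $\<3'>$ (the last feeding into $\<3'0>$) I would argue exactly as in \cite{Xu18}: since $G\in\cC^{3,\beta}_{\pP}$, the Fourier transforms $\widehat{G^{(\ell)}}$ for $0\le\ell\le 3$ decay like $(1+|\theta|)^{-3-\beta+\ell}$, which suffices both for the one-frequency bounds of \cite[Theorem~1.4]{Xu18} and for the same estimates applied to the differences $\widehat{G^{(\ell)}}-\widehat{G^{(\ell)}_\delta}$. The object $\<0'>$ is treated as in Section~\ref{sec:spde_0}: $\sqrt{\eps}\Psi_\eps$ has constant variance, so $\tfrac1{6a}G^{(3)}_\delta(\sqrt{\eps}\Psi_\eps)\to1$ almost surely in $\cC^{-\kappa}$, while the H\"older bound $|G^{(3)}-G^{(3)}_\delta|\lesssim\delta^{\beta}(1+|\cdot|)^{M}$ together with constancy of the variance gives $\|\scal{G^{(3)}(\sqrt{\eps}\Psi_\eps)-G^{(3)}_\delta(\sqrt{\eps}\Psi_\eps),\varphi^\lambda}\|_{2n}\lesssim\eps^{\nu\beta}$, uniformly in $\lambda$.

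The genuinely singular object is $\<3'2'>=\<3'0>\cdot\<2'>$ of regularity $-\tfrac12-$, the analogue of the KPZ object $\<2'1'>$ in Section~\ref{sec:spde_21}: since $\<3'0>\in\cC^{1/2-}$ and $\<2'>\in\cC^{-1-}$ the product is not analytically well defined and Theorem~\ref{thm:main_result} is needed. The kernel $K$ in $\<3'0>=\iI(\<3'>)$ is the truncated heat kernel, of singularity $|\fs|-2=3$, i.e.\ $\gamma=2$; Assumption~\ref{as:cor} holds with $\alpha=1$ (the $\Phi^4_3$ analogue of \cite[Lemma~4.1]{HX19}), and the quadruple $(\alpha,m_1,m_2,\gamma)=(1,2,3,2)$ with $|\fs|=5$ satisfies Assumption~\ref{as:stochastic}. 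Pairing $\<3'2'>_\eps-\<3'2'>^{(\delta)}_\eps$ with $\varphi^\lambda$ and Fourier expanding $G$ and $G'$ writes it as a finite sum of $\eps^{-5/2}\scal{\Upsilon,\aA_{\eps,\lambda}\fF}_{\Btheta}$ in which each $\Upsilon$ contains a factor $\widehat{G^{(\ell)}}-\widehat{G^{(\ell)}_\delta}$, with $\fF(\Btheta,x,y)=\tT_{(1)}(\cos(\theta_\fx X))\tT_{(2)}(\sin(\theta_\fy Y))$ ($m_1=2$ from $G'$, the factor outside $K$ carrying $\varphi^\lambda$; $m_2=3$ from $G$, inside $K$). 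Theorem~\ref{thm:main_result} gives $\|\d_{\Btheta}^{\r}(\aA_{\eps,\lambda}\fF)(\Btheta)\|_{2n}\lesssim\eps^{5/2-\eta}\lambda^{-1/2-\eta}$ for every small $\eta$ and every $\r$; combining this with Lemmas~\ref{lem:local_decompose},~\ref{lem:local_bound} (applied to $G,G'$, with parameter $\omega=\tfrac\beta2$) and~\ref{lem:exchange}, the sum over $\K\in\Z^2$ converges and yields $\|\scal{\<3'2'>_\eps-\<3'2'>^{(\delta)}_\eps,\varphi^\lambda}\|_{2n}\lesssim\eps^{\nu\omega-\eta}\lambda^{-1/2-\eta}$; choosing $\eta$, then $\zeta'$, small enough — using that $|\<3'2'>|=-\tfrac12-\kappa$ leaves room to absorb the $-\eta$ loss in the exponent of $\lambda$ — completes this object. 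The extra renormalisation $(3C_{\<2'2's>}^{(\eps)}+2C_{\<3'1's>}^{(\eps)})\Psi_\eps$ differs from its mollified counterpart by at most $O(\delta^{\omega}\log\tfrac1\eps)\,\Psi_\eps$, which tends to $0$ in $\cC^{-1/2-}$ and is absorbed into the nice part.

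Finally, the borderline objects $\<3'1'>=\<3'0>\cdot\<1'>$ and $\<2'2'>=\<2'>\cdot\iI(\<2'>)$, both of regularity $0-$, are treated as the KPZ object $\<2'1'1'>$ in Section~\ref{sec:reg_0}: for $\<3'1'>$ one combines the one-frequency bounds with the heat-kernel regularisation (a Lemma~\ref{lem:kernel_holder}-type estimate, now with gain $2$) to obtain $(\E\|\<3'0>^{(\delta)}_\eps\|^{2n}_{\cC^{1/2-\kappa'}})^{1/2n}\lesssim1$ and, by an analogue of Lemma~\ref{lem:J}, $(\E\|\<1'>_\eps-\<1'>^{(\delta)}_\eps\|^{2n}_{\cC^{-1/2+\zeta'}})^{1/2n}\lesssim\eps^{\zeta'}$ with $\zeta'>\kappa'$; Lemma~\ref{lem:prod_holder} then makes $\scal{\<1'>_\eps-\<1'>^{(\delta)}_\eps,\<3'0>^{(\delta)}_\eps\varphi^\lambda}$ a well-defined pairing with the required smallness, and the symmetric term $(\<3'0>_\eps-\<3'0>^{(\delta)}_\eps)\cdot\<1'>_\eps$ is handled the same way; $\<2'2'>$ is identical with $\<3'0>$ replaced by $\iI(\<2'>)\in\cC^{1-}$ and $\<1'>$ by $\<2'>\in\cC^{-1-}$, using a Lemma~\ref{lem:J}-type bound for $\<2'>_\eps-\<2'>^{(\delta)}_\eps$ in $\cC^{-1+\zeta'}$, the second-order constant $C_{\<2'2's>}^{(\eps)}$ again being absorbed into the nice part. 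Assembling the three cases yields \eqref{e:phi4_main_bound}, whence the convergence of $\hPi^\eps$ follows by a Kolmogorov argument as in \cite[Section~10.5]{rs_theory}. The main obstacle is $\<3'2'>$: the accounting is tight, since the full prefactor $\eps^{-5/2}$ must be beaten using only the $\delta^{\omega}$ from a single mollification difference, while Theorem~\ref{thm:main_result} already costs $\eps^{-\eta}$ and returns only the borderline exponent $\lambda^{-1/2}$ (surviving precisely because $|\<3'2'>|=-\tfrac12-\kappa$); the secondary difficulty is to push $\<1'>_\eps-\<1'>^{(\delta)}_\eps$ and $\<2'>_\eps-\<2'>^{(\delta)}_\eps$ into H\"older spaces strictly above the naive threshold so that Lemma~\ref{lem:prod_holder} applies to the just-below-threshold products in $\<3'1'>$ and $\<2'2'>$.
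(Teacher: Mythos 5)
Your proposal is correct and takes essentially the same route as the paper: decompose each $\tau_\eps$ into $\tau_\eps^{(\delta)}$ plus a remainder with $\delta=\eps^\nu$, treat the one-frequency objects via \cite{Xu18} and Section~\ref{sec:spde_0}, the regularity-$(0-)$ objects $\<3'1'>$ and $\<2'2'>$ via the $\<2'1'1'>$ machinery of Section~\ref{sec:reg_0}, and $\<3'2'>$ via Theorem~\ref{thm:main_result} with $(\alpha,m_1,m_2,\gamma)=(1,2,3,2)$, $r_e=1$. Your parameter identification and exponent bookkeeping (the $\eps^{-5/2}$ prefactor exactly cancelled by the $\eps^{5/2-\eta}$ from Theorem~\ref{thm:main_result}, with the net smallness $\eps^{\nu\omega-\eta}$ coming from the single mollification difference) match the paper's Section~3.3, as does your observation that the second-order renormalisation constants beyond Wick are only logarithmic and therefore absorbed.
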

    
    As before, Theorem~\ref{thm:phi43} follows from Theorems~\ref{thm:abstract_eq} and~\ref{thm:phi4_sto_convergence}. Then it remains to prove Theorem~\ref{thm:phi4_sto_convergence}. We again write $\tau_\eps$ for $\hPi^\eps \tau$, and $\tau$ for $\Phi^\Phi \tau$. The first order processes can be treated in the same way as shown in \cite{Xu18} and Section~\ref{sec:spde_0}. And we adopt the same procedure as in Section~\ref{sec:reg_0} to deal with the objects (\<3'1'>,\;\<2'2'>) with regularity $0-$. Then it remains to focus on the object \<3'2'>.
    
    \begin{rmk}
        The way to prove convergence for $\<3'2'>$ is still the same: decomposing it into $\<3'2'>_\eps^{(\delta)}$ and a small remainder, applying \cite{HX19} to prove the convergence of $\<3'2'>_\eps^{(\delta)}$ to $\<3'2'>$, and Theorem~\ref{thm:main_result} to prove the convergence of the remainder to $0$. 

        For the convergence of $\<3'2'>_\eps^{(\delta)}$, one notices that the form in \cite[Theorem~6.2]{HX19} only has the $0$-th chaos removed in the Wick ordering part, while $\<3'>$ has its first chaos removed. But the proof of \cite[Theorem~6.2]{HX19} did not make use of the precise number of removed chaos in Wick ordering. Making it arbitrary only affects the power of $|\Btheta|$ in the bound, but does not change its form. Hence, one can still apply it to obtain convergence of $\<3'2'>_\eps^{(\delta)}$ to $\<3'2'>$ with $\delta = \eps^{\nu}$ for some sufficiently small $\nu$. 
    \end{rmk}
    
    In the case $\tau=\<3'2'>$, we have $m_1=2,m_2=3,\alpha=1$, and $\fF$ is given by
    \begin{equ}
        \fF(\Btheta,x,y) = \tT_{(1)}(\cos (\theta_{\fx} X)) \tT_{(2)}(\sin (\theta_{\fy} Y)).
    \end{equ}
    Proceeding as in Section~\ref{sec:spde_21}, it suffices to prove the following statement.
    \begin{prop}
        Recall the operator $\aA_{\eps,\lambda}$ defined in \eqref{e:operator}. We have the bound
        \begin{equ}
            \eps^{-\frac{5}{2}}\E |(\aA_{\eps,\lambda}\d_{\Btheta}^{\r}\fF)(\Btheta)|^{2n} \lesssim \eps^{-\zeta}\lambda^{-\frac{1}{2}-\kappa+\zeta}
        \end{equ}
        for sufficiently small $\zeta>0$. And the bound holds uniformly in $\eps,\lambda\in(0,1)$.
    \end{prop}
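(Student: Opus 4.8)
The plan is to derive this bound as a direct corollary of our main result, Theorem~\ref{thm:main_result}, applied with the parameters forced by the symbol $\<3'2'>$. In the dynamical $\Phi^4_3$ setting we have $\Psi_\eps = P * \xi_\eps$ with $P$ the heat kernel on parabolically scaled $\R^4$, so $\fs = (2,1,1,1)$ and $|\fs| = 5$; the object carries $m_1 = 2$, $m_2 = 3$ and $\alpha = 1$; and the kernel $K_0$ in \eqref{e:kernel} is the usual smooth truncation of $P$, which has singularity $|\fs| - 2 = 3$ at the origin. Hence $\gamma = 2$, and consequently $r_e = \lceil \gamma - \tfrac{\alpha m_2}{2}\rceil \vee 0 = 1$, so that $K(x,y) = K_0(x-y) - K_0(-y)$ — precisely the kernel appearing in $\hPi^\eps\<3'0>$. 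With these identifications, $\eps^{-5/2}(\aA_{\eps,\lambda}\d_{\Btheta}^{\r}\fF)(\Btheta)$ is exactly the quantity controlled by Theorem~\ref{thm:main_result}.

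The first step is to check the hypotheses. That $\Psi_\eps = P * \xi_\eps$ is centred, stationary, and satisfies Assumption~\ref{as:cor} with $\alpha = 1$ is standard: its covariance is the self-convolution of $P * \rho_\eps$, which has homogeneity $-3 + (-3) + |\fs| = -1$ away from a region of size $\eps$ about the diagonal and is bounded there above and below, exactly as in the KPZ case \cite[Lemma~4.1]{HX19}. The kernel assumption \eqref{e:kernel_assumption} holds with $\gamma = 2$ since $|D^k K_0(x)| \lesssim |x|^{-3-|k|}$. Finally, Assumption~\ref{as:stochastic} is satisfied: $0 < \alpha m_1 = 2 < 5 = |\fs|$ and $0 < \alpha m_2 = 3 < 5$; $\alpha(m_1 + m_2) = 5 \leq |\fs| + 2\gamma = 9$; and $0 < \gamma = 2 \leq \tfrac{|\fs|}{2} = \tfrac52$. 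One also records that the chaos expansion of $\<3'>$ starts at order $3$ and that of $\<2'>$ at order $2$, which is why $\tT_{(m_2-1)} = \tT_{(2)}$ and $\tT_{(m_1-1)} = \tT_{(1)}$ appear in $\fF$, consistently with the parity convention ($m_2$ odd for $\sin$, $m_1$ even for $\cos$).

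With these verifications in place, \eqref{eq:main_result'} of Theorem~\ref{thm:main_result} gives, for every sufficiently small $\eta > 0$,
\begin{equation*}
    \big\|(\aA_{\eps,\lambda}\d_{\Btheta}^{\r}\fF)(\Btheta)\big\|_{2n} \lesssim \eps^{\frac{\alpha(m_1+m_2)}{2}-\eta}\,\lambda^{-\frac{\alpha(m_1+m_2)}{2}+\gamma-\eta} = \eps^{\frac52-\eta}\,\lambda^{-\frac12-\eta}\;,
\end{equation*}
uniformly over $\eps,\lambda \in (0,1)$ and $\Btheta \in \R^2$. Multiplying by $\eps^{-5/2}$ and then taking $\zeta \leq \kappa/2$ and $\eta = \zeta$ (so that $\eta \leq \zeta$ and $\eta + \zeta \leq \kappa$, using $\eps,\lambda < 1$) yields the claimed bound. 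The only substantive ingredient here is Theorem~\ref{thm:main_result} itself; there is no separate obstacle in the present proposition beyond matching parameters and confirming that the heat-kernel truncation fits the framework of Section~\ref{sec:statement}. The one point that needs care is the Wick-ordering bookkeeping ($\tT_{(1)}$ versus $\tT_{(2)}$), but as noted in the remark preceding this proposition the proof of Theorem~\ref{thm:main_result} (and of \cite[Theorem~6.2]{HX19}) does not depend on the precise number of removed chaos components beyond the parity constraints already imposed.
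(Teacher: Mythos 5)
Your proof is correct and takes essentially the same approach as the paper's (which consists of a single terse sentence identifying the kernel singularity and $r_e$, then invoking Theorem~\ref{thm:main_result}); you simply spell out the parameter matching, the assumption verification, and the final exponent bookkeeping in full detail. One small point worth flagging: the proposition as stated appears to have a typographical slip, with $\E|\cdot|^{2n}$ in place of $\big(\E|\cdot|^{2n}\big)^{1/2n}$, and you correctly interpret it as the $\|\cdot\|_{2n}$ norm consistently with the analogous display in the KPZ section.
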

    \begin{proof}
        Note that the kernel $K$ has singularity $5-2$ at the origin and positive renormalisation degree $r_e=1$. Thus the result is a direct conclusion of Theorem~\ref{thm:main_result}.
    \end{proof}
    Then we can proceed as \cite[Section~5]{HX19} to conclude the convergence of $\<3'2'>_{\eps}$ to $\<3'2'>$ in the limiting dynamical $\Phi^4_3$ model.

\appendix
\section{Proof of the correlation bounds in Section~\ref{sec:correlation_bounds}} \label{sec:singleton_proof}

In this appendix, we prove Lemmas~\ref{le:comparable_bound}, ~\ref{le:singleton_bound} and~\ref{le:fixed_bound} from Section~\ref{sec:correlation_bounds}. We write in a more general setting. Let $K$ be a positive integer\footnote{We use this notation since the integration kernel will not appear in the appendix.}, and $\vec{z} = (z_1, \dots, z_{K})$ be a collection of $K$ space-time points. For each $j$, let $Z_{j} = \eps^{\frac{\alpha}{2}} \Psi_\eps(z_j)$. Also let $\Btheta = (\theta_j)_{j \in [K]} \in \R^{K}$. We aim to bound the quantity
\begin{equation} \label{e:trig_general_form}
    \E \prod_{j=1}^{K} \d_{\theta_j}^{r_j} \tT_{(t_j-1)} \big( \trig_{\eta_j} (\theta_j Z_{j}) \big)
\end{equation}
in terms of multi-point correlations of $Z_j$'s, and being uniform in $\eps$, the frequencies $\Btheta$, and the locations $\vec{z}$. Lemmas~\ref{le:comparable_bound} and~\ref{le:singleton_bound} correspond to $K=4n$, $z_j = x_j$, $\theta_j = \theta_{\fx}$, $\eta_j = \zeta_1$ and $t_j = m_1$ for $j \leq 2n$, and $z_{j} = y_{j-2n}$, $\theta_j = \theta_{\fy}$, $\eta_j=\zeta_2$ and $t_j = m_2$ for $j \geq 2n+1$. As for Lemma~\ref{le:fixed_bound}, we will split the left hand side of \eqref{e:fixed_bound_alpha} into several terms, each of which is of the form \eqref{e:trig_general_form} with $K = 2n+1$, $z_j = y_j$, $\eta_j = \zeta_1$ for $j \leq 2n$ and $z_{2n+1} = x$, $\eta_{2n+1} = \zeta_2$. 

The strategy of the proof of the correlation bounds mainly follows the clustering arguments developed in \cite[Section 6]{HX19} and its refinement in \cite[Section 2]{Xu18}. Fix $\Btheta$ and $\vec{z}$ (and we aim to obtain bounds independent of them).

Let $L_0$ be a sufficiently large constant whose value (depends on $n$, $\r$ and $\Lambda$ but independent of $\eps$, $\lambda$ and $\Btheta$) will be specified later (see the proof of Lemma~\ref{le:comparable_bound} below). Let $\Clus$ denote the equivalence class of $[K]$ obtained by the equivalence relation $\sim$ such that $j \sim j'$ if and only if there exists $k \leq K-1$ and $j_0, \dots, j_k \in [K]$ such that $|z_{j_{i+1}} - z_{j_i}| \leq L_{0} \eps$ for every $i=0, \dots, k-1$. Note that our clustering parameter here is different from the parameter in Section $\ref{sec:proof}$, where we only do clustering with respect to some of the points rather than all points. This subtlety would only appear in the proof of Lemma \ref{le:singleton_bound} where we let $L = 3n L_0$. Let
\begin{equation*}
    \sS = \{\set \in \Clus: |\set| = 1\}
\end{equation*}
be the set of singletons in $\Clus$, and $\uU := \Clus \setminus \sS$. With an abuse of notation, we will write $s \in \sS$ instead of $\{s\} \in \sS$.

The following proposition is implied by \cite[eq.(2.5)]{Xu18}.
\begin{prop} \label{pr:no_singleton}
    We have the bound
    \begin{equation*}
        \E \prod_{j=1}^{K} \Big( \sum_{k = t_j}^{t+1} Z_{j}^{\diamond k} \Big) \gtrsim 1\;,
    \end{equation*}
    where $t = \max_{j \in [K]} t_j$. The bound is uniform over $\eps \in (0,1)$, $\Btheta \in \R^{K}$ and point configuration $\vec{z}$ such that $|\sS| = \emptyset$. 
\end{prop}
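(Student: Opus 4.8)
The plan is to expand the product and invoke the diagram (Wick) formula for moments of products of Wick powers of jointly centered Gaussian variables, exploiting the crucial fact that under Assumption~\ref{as:cor} \emph{all} pairwise covariances are strictly positive: $\E(Z_iZ_j) = \eps^\alpha\,\E(\Psi_\eps(z_i)\Psi_\eps(z_j)) > 0$ for every $i,j$. For any tuple $\vec k = (k_j)_{j\in[K]}$ with $t_j \le k_j \le t+1$, the diagram formula reads
\[
  \E\prod_{j=1}^K Z_j^{\diamond k_j} \;=\; \sum_{P}\ \prod_{\{a,b\}\in P} \E\big(Z_{\iota(a)}Z_{\iota(b)}\big),
\]
where the sum is over all pairings $P$ of the $\sum_j k_j$ half-edges (the $j$-th factor carrying $k_j$ of them, with membership recorded by $\iota$) that never match two half-edges of the same factor. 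By positivity of the covariances every summand is nonnegative, so $\E\prod_j Z_j^{\diamond k_j}\ge 0$. Writing $\E\prod_j\big(\sum_{k=t_j}^{t+1}Z_j^{\diamond k}\big) = \sum_{\vec k}\E\prod_j Z_j^{\diamond k_j}$ and discarding all but one $\vec k$, it then suffices to produce a single admissible $\vec k^\ast$ and a single admissible pairing $P^\ast$ for it whose contribution is bounded below by a constant depending only on $K$, $L_0$, $\Lambda$, $\alpha$.

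First I would choose $\vec k^\ast$ cluster by cluster. Fix a cluster $\set\in\Clus$; the hypothesis $\sS=\emptyset$ forces $m:=|\set|\ge 2$ — this is precisely where the no-singletons assumption enters, since the half-edges of an isolated factor could only be matched to themselves. A multiset of half-edges with multiplicities $(k_j)_{j\in\set}$ admits a perfect matching among itself with no self-loops exactly when $\sum_{j\in\set}k_j$ is even and $\max_{j\in\set}k_j \le \sum_{j'\in\set\setminus\{j\}}k_{j'}$. Both conditions can always be arranged with $k_j\in\{t,t+1\}\subseteq\{t_j,\dots,t+1\}$ (valid since $t=\max_{j'}t_{j'}\ge t_j$): set $k_j=t+1$ for all $j\in\set$, and if $m(t+1)$ is odd — which, since $m\ge 2$, forces $m\ge 3$ — lower a single $k_j$ to $t$. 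A one-line check then gives even parity and the max-degree condition (using $m\ge 2$ in the first case and $(m-2)(t+1)\ge 1$ in the second). Carrying this out in every cluster yields $\vec k^\ast$ together with, inside each cluster, a self-loop-free perfect matching of its half-edges; since the clusters partition $[K]$, the union of these matchings is a global admissible pairing $P^\ast$ in which every matched pair joins two half-edges whose factors lie in a common cluster.

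It remains to bound the contribution of $P^\ast$ from below. Two points in a common cluster are at distance at most $(K-1)L_0\eps$, so Assumption~\ref{as:cor} gives, for each edge $\{a,b\}$ of $P^\ast$,
\[
  \E\big(Z_{\iota(a)}Z_{\iota(b)}\big) \;\ge\; \frac{\eps^\alpha}{\Lambda\big((K-1)L_0+1\big)^\alpha\eps^\alpha} \;=\; c_0 \;>\;0,
\]
with $c_0$ depending only on $K$, $L_0$, $\Lambda$, $\alpha$. Since $P^\ast$ has $\tfrac12\sum_j k_j^\ast \le \tfrac12 K(t+1)$ edges, the contribution of $P^\ast$ — and hence the whole expectation — is at least $c_0^{K(t+1)/2}>0$, uniformly in $\eps$, $\Btheta$ and $\vec z$. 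The only genuinely delicate ingredient is the elementary combinatorial fact on self-loop-free perfect matchings of half-edge multisets together with the verification that the degree choice above meets its hypotheses; the degenerate cases cause no trouble since $t=\max_j t_j\ge 1$ in our setting. (Alternatively, the statement can be extracted directly from \cite[eq.~(2.5)]{Xu18}; the argument above is just an unpacking of it.)
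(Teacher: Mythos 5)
Your proof is correct. The paper does not actually give its own argument for this proposition (it simply notes it is "implied by \cite[eq.~(2.5)]{Xu18}"), and your self-contained derivation — expanding via the diagram formula, using the strict positivity of all pairwise covariances from Assumption~\ref{as:cor} to discard all but one term, choosing exponents $k_j\in\{t,t+1\}$ cluster by cluster so that each cluster of size $\ge 2$ admits a loop-free internal perfect matching, and then lower-bounding each intra-cluster covariance by a constant depending only on $K,L_0,\Lambda,\alpha$ — is exactly the unpacking of that reference, as you yourself note.
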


Since the quantity \eqref{e:trig_general_form} is bounded by a constant depending on $K$, $\Lambda$, $\r$ and $\t$ only, Proposition~\ref{pr:no_singleton} gives the bound
\begin{equation*}
    \Big| \E \prod_{j=1}^{K} \d_{\theta_j}^{r_j} \tT_{(t_j - 1)} \big( \trig_{\eta_j} (\theta_j Z_{j}) \big) \Big| \lesssim \E \prod_{j=1}^{K} \Big( \sum_{k = t_j}^{t+1} Z_{j}^{\diamond k} \Big)\;,
\end{equation*}
uniformly over all frequencies $\Btheta \in \R^K$ and all point configurations $\vec{z}$ with the constraint that $\sS = \emptyset$. This already matches Lemmas~\ref{le:comparable_bound} and~\ref{le:singleton_bound} for point configurations with $\sS = \emptyset$, and ``almost" matches Lemma~\ref{le:fixed_bound} (this issue will be addressed below). Hence, we now focus on the point configurations with $\sS \neq \emptyset$. 

% \begin{cor} \label{cor:no_singleon}
%     The bounds in Lemmas~\ref{le:comparable_bound} and~\ref{le:singleton_bound} hold uniformly over $(\x, \y) \in \sS_{4n}^{c}$. The bounds in Lemma~\ref{le:fixed_bound} hold uniformly over $(x, \y) \in \sS_{2n+1}^{c}$. 
% \end{cor}
% \begin{proof}
%     It suffices to note that all the quantities on the left hand sides of these lemmas are bounded by a constant depending on $n$ and $\Lambda$ only. 
% \end{proof}

Suppose $|\sS| \geq 1$. Following the notations in \cite[Section 6]{HX19} and \cite[Section 2]{Xu18}, for $\Z=(Z_j)_{j\in[K]}$,  $\Btheta=(\theta_j)_{j\in[K]}$ and $\n=(n_j)_{j\in[K]}$, we write $\Z_{\set}$, $\Btheta_\set$ and $\n_{\set}$ for restrictions on a cluster $\set\in\Clus$, and also $\Z_{\set}^{\diamond\n_{\set}} = \bdiamond_{j\in\set}Z_{j}^{\diamond n_j}$. Then we can write
\begin{equation*}
    \E \prod_{j=1}^{K} \d_{\theta_j}^{r_{j}} \tT_{(t_j-1)}(\trig_{\eta_j}(\theta_j Z_{j})) = \sum_{N\geq0} \sum_{\stackrel{\n\in\N^{K}}{|\n|=N}} \bigg( \prod_{\set\in\Clus} C_{\n_\set}(\Btheta_\set,\Z_{\set}) \bigg) \bigg( \E\prod_{\set\in\Clus} \Z_{\set}^{\diamond\n_{\set}} \bigg),
\end{equation*}
where $C_{\n_{\set}}(\Btheta_{\set},\Z_{\set})$ is the coefficient of $\Z_{\set}^{\diamond \n_\set}$ in the chaos expansion of
\begin{equation*}
    \prod_{j \in \set} \d_{\theta_j}^{r_j} \tT_{(t_j-1)} \big( \trig_{\eta_j} (\theta_j Z_{j}) \big)\;,
\end{equation*}
and has the expression
\begin{equation} \label{e:chaos_coefficient_formula}
    C_{\n_{\set}}(\Btheta_{\set},\Z_{\set}) = \frac{1}{\n_{\set}!} \cdot \E\prod_{j\in\set} \left(\d_{\theta_j}^{r_{j}} \d_{Z_{j}}^{n_{j}} \tT_{(t_j-1)}( \trig_{\eta_j}(\theta_j Z_{j}))\right).
\end{equation}
Note that for $s \in \sS$, the chaos expansion for $Z_s$ starts from $Z_{s}^{\diamond t_s}$. Hence, we can bound the above quantity by
\begin{equation} \label{e:trig_expansion_bound}
    \begin{split}
    &\phantom{1111}\bigg| \E \prod_{j=1}^{K} \d_{\theta_j}^{r_{j}} \tT_{(t_j-1)}(\trig_{\eta_j}(\theta_j Z_{j})) \bigg|\\
    &\leq \sum_{N \geq 0} \bigg[ \bigg( \sum_{\stackrel{|\n| = N}{\n_\sS \geq \t_\sS}} \prod_{\set \in \Clus} \big| C_{\n_\set} (\Btheta_\set, \Z_\set) \big| \bigg) \times \sup_{\stackrel{|\n| = N}{\n_\sS \geq \t_\sS}} \E \Big( \prod_{\set \in \Clus} \Z_{\set}^{\diamond \n_\set} \Big) \bigg]\;,
    \end{split}
\end{equation}
where both the sum and supremum are taken over $\n_\sS \geq \t_\sS$ in the sense that $n_s \geq t_s$ for every $s \in \sS$. 

Although the following lemma is not explicitly stated in \cite[Section~2]{Xu18}, it is a direct corollary of \cite[Proposition~2.3, 2.8, 2.9 and Section~2.6]{Xu18}

\begin{lem} \label{le:reduction_enhancement}
    There exists $C>0$ depending on $K$, $\Lambda$ and $\r$ only such that
    \begin{equation*}
        \sup_{\stackrel{|\n| = N}{\n_\sS \geq \t_\sS}} \E \prod_{\set \in \Clus} \Z_{\set}^{\diamond \n_\set} \leq N!! \cdot \big( C / L_{0}^{\frac{\alpha}{2}}\big)^{N} \cdot \E \prod_{j=1}^{K} \Big( \sum_{k_j = t_j}^{t+1} Z_{j}^{\diamond k_j} \Big)\;,
    \end{equation*}
    uniformly over $\eps \in (0,1)$ and the location of $\vec{z}$. 
\end{lem}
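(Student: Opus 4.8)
The plan is to expand the expectation over the clusters by the Gaussian product formula and then feed the result into the clustering estimates of \cite[Section~2]{Xu18}, so that everything reduces to the singleton-free base case furnished by Proposition~\ref{pr:no_singleton}. Since the $\Z_\set^{\diamond\n_\set}$ are Wick monomials attached to disjoint clusters $\set\in\Clus$, the expectation $\E\prod_{\set\in\Clus}\Z_\set^{\diamond\n_\set}$ equals the sum, over all pairings of the $N=|\n|$ ``legs'' that contain no edge with both endpoints inside one cluster, of the product of the covariances $\E(Z_iZ_j)$ over the edges of the pairing; by Assumption~\ref{as:cor} all these covariances are nonnegative. There are at most $(N-1)!!\le N!!$ such pairings, so it suffices to bound a single product of covariances by $(C/L_0^{\alpha/2})^{N}\,\E\prod_{j=1}^{K}\big(\sum_{k=t_j}^{t+1}Z_j^{\diamond k}\big)$ and then multiply by $N!!$.

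\textbf{Reduction to \cite{Xu18}.} The next step records the two covariance scales coming out of Assumption~\ref{as:cor}: one always has $0<\E(Z_iZ_j)\le\Lambda$, while $\E(Z_iZ_j)\le\Lambda L_0^{-\alpha}$ as soon as $z_i$ and $z_j$ lie in distinct clusters, since by construction of $\Clus$ two points in different clusters are more than $L_0\eps$ apart. In particular every leg carried by a singleton $\set\in\sS$ must be matched to a point in another cluster. This is precisely the configuration analysed in \cite[Propositions~2.3, 2.8 and~2.9]{Xu18}: those estimates allow one to ``peel off'' the clusters and their chaos components one at a time, each removal of a chaos leg producing a multiplicative factor bounded by $C(K,\Lambda,\r)\,L_0^{-\alpha/2}$, until one is left with a singleton-free configuration in which each point $j$ carries at most $t+1$ legs. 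At that final stage the remaining expectation is comparable to $\E\prod_{j=1}^{K}\big(\sum_{k=t_j}^{t+1}Z_j^{\diamond k}\big)$ by Proposition~\ref{pr:no_singleton} together with \cite[Section~2.6]{Xu18}. Collecting the (at most) $N$ peeling factors, the $N!!$ from the number of admissible pairings, and this base term yields the asserted inequality. Uniformity in $\eps$ and in the locations $\vec{z}$ is automatic, since every constant above originates either from Assumption~\ref{as:cor} or from the purely combinatorial count of pairings, neither of which sees $\eps$ or the precise positions of the points.

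\textbf{Main obstacle.} The delicate point is the reduction step: one must arrange the factor $L_0^{-\alpha/2}$ to attach to \emph{every} chaos leg, not merely to the legs sitting at singletons. This is exactly the content of the enhancement bookkeeping in \cite[Section~2]{Xu18} — tracking how the relevant ``enhancement'' quantity decreases when a cluster, or a single chaos component, is removed, and verifying that each such step genuinely costs a factor of order $L_0^{-\alpha/2}$ rather than merely $O(1)$, while the number of removals is controlled by $N$ and the accumulated combinatorial cost by $N!!$. Once this accounting is in place (which is why one needs $L_0$ large, depending on $n$, $\r$ and $\Lambda$), the statement follows; note that the lemma involves only the Wick expectations and no $\d_{\Btheta}^{\r}$ derivatives, so no further estimate on the chaos coefficients $C_{\n_\set}(\Btheta_\set,\Z_\set)$ is required here.
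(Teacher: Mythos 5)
The paper itself proves this lemma purely by citation: it is declared ``a direct corollary of \cite[Propositions~2.3, 2.8, 2.9 and Section~2.6]{Xu18}'' with no further argument. Your proposal takes the same route — reduce to a Wick expansion and then invoke the clustering/enhancement machinery of \cite{Xu18} — and in that sense matches the paper. Your added sketch of \emph{why} the Xu18 results apply (all edges in the Wick expansion of $\E\prod_{\set}\Z_\set^{\diamond\n_\set}$ are necessarily inter-cluster, hence carry a covariance $\lesssim\Lambda L_0^{-\alpha}$; the number of pairings is $\le N!!$; the rest is the enhancement bookkeeping) is the right high-level picture.

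Two imprecisions in the sketch are worth flagging, though they do not change the fact that your route is the paper's route. First, after the ``peeling'' you describe, the configuration is \emph{not} singleton-free: whether a point is a singleton is determined by $\vec z$ and $L_0\eps$, not by the degree vector $\n$, so a singleton $s\in\sS$ remains one after reducing $n_s$ down to $k_s\in[t_s,t+1]$. Consequently Proposition~\ref{pr:no_singleton} (which is a \emph{lower} bound valid only when $\sS=\emptyset$) does not apply at the ``final stage,'' and the comparison to $\E\prod_j\big(\sum_{k}Z_j^{\diamond k}\big)$ should be argued directly through the covariance estimates / enhancement identities rather than through Proposition~\ref{pr:no_singleton}. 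Second, the number of removal steps is $N-|\k|$ for the terminal degree $\k$, so the natural outcome of the peeling is a factor $\big(CL_0^{-\alpha/2}\big)^{N-|\k|}$ rather than $\big(CL_0^{-\alpha/2}\big)^{N}$; the difference is only an $L_0$-dependent constant $\big(CL_0^{-\alpha/2}\big)^{|\k|}$, which is harmless in the downstream sum over $N$ in \eqref{e:trig_expansion_bound} but means that if you literally claim a factor of $L_0^{-\alpha/2}$ per \emph{every} leg $j$ with $n_j=t_j$, you should say where that factor comes from. These are bookkeeping points that \cite[Section~2]{Xu18} handles; since the paper itself is no more explicit here, your proposal should be regarded as equivalent to the paper's proof in spirit.
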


The correlation on the right hand side above already matches that in Lemmas~\ref{le:comparable_bound} and~\ref{le:singleton_bound}, and almost matches that in Lemma~\ref{le:fixed_bound}. It now remains to control the sum of the coefficients $C_{\n_\set}(\Btheta_\set, \Z_\set)$. The following lemma is the same as \cite[Lemma~2.2]{Xu18}. 

\begin{lem} \label{le:coefficients_bound_1}
    There exists $C>0$ depending on $K$, $\Lambda$ and $\r$ only such that
    \begin{equation*}
        |C_{\n_\set}(\Btheta_\set, \Z_\set)| \leq \frac{\big( C(1+|\Btheta|) \big)^{|\n_\set|}}{\n_\set !}\;.
    \end{equation*}
    Furthermore, we have
    \begin{equation*}
        \sum_{|\n| = N + |\t_\sS|} \prod_{\set \in \Clus} \big| C_{\n_\set} (\Btheta_\set, \Z_\set) \big| \leq e^{-\frac{1}{2\Lambda} \sum_{s \in \sS} \theta_s^2} \cdot \frac{\big( C(1+|\Btheta|) \big)^N}{N!}\;.
    \end{equation*}
    Both bounds are uniform in $\Btheta$. 
\end{lem}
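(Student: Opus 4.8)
The plan is to read off both inequalities from the explicit chaos-coefficient formula \eqref{e:chaos_coefficient_formula}; the whole argument is an adaptation of \cite[Lemma~2.2]{Xu18} to the present situation with two frequency groups and a multi-index $\r$ of $\theta$-derivatives, so the new content is essentially notational bookkeeping.

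For the first bound I would argue factor by factor in \eqref{e:chaos_coefficient_formula}. Since $\d_{Z_j}^{n_j}\trig_{\eta_j}(\theta_j Z_j)=\pm\,\theta_j^{n_j}\,\psi(\theta_j Z_j)$ for some bounded $\psi\in\{\sin,\cos\}$, the Leibniz rule in $\theta_j$ shows that $\d_{\theta_j}^{r_j}\d_{Z_j}^{n_j}\trig_{\eta_j}(\theta_j Z_j)$ is a sum of at most $2^{r_j}$ monomials $c\,\theta_j^{a}Z_j^{b}\,\psi(\theta_j Z_j)$ with $a\leq n_j$, $b\leq r_j$ and $|c|\lesssim_{r_j}n_j^{r_j}\leq r_j!\,e^{n_j}$; the truncation $\tT_{(t_j-1)}$ only subtracts a polynomial in $Z_j$ of degree below $t_j$, whose $\theta_j$-derivatives obey the same type of bound because $\E Z_j^2\in[\Lambda^{-1},\Lambda]$ by Assumption~\ref{as:cor} and the relevant moments of $Z_j$ are uniformly bounded. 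Hence $|\d_{\theta_j}^{r_j}\d_{Z_j}^{n_j}\tT_{(t_j-1)}(\trig_{\eta_j}(\theta_j Z_j))|\lesssim_{\r,\t,\Lambda}e^{n_j}(1+|\theta_j|)^{n_j}(1+|Z_j|)^{C}$ pointwise, with $C=C(\r,\t)$. Plugging this into \eqref{e:chaos_coefficient_formula}, using $\big|\E\prod_{j\in\set}(\cdot)\big|\leq\prod_{j\in\set}\|(\cdot)\|_{L^{|\set|}}$ together with boundedness of Gaussian moments, and using $(1+|\theta_j|)^{n_j}\leq(1+|\Btheta|)^{n_j}$, one obtains $|C_{\n_\set}(\Btheta_\set,\Z_\set)|\leq\frac{1}{\n_\set!}(C(1+|\Btheta|))^{|\n_\set|}$ once $e^{|\n_\set|}$ and $C^{|\set|}$ are absorbed into the constant.

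For the summed bound the extra ingredient is the Gaussian factor carried by a trigonometric function of a Gaussian: with $\sigma_s^2=\E Z_s^2\geq\Lambda^{-1}$, the identity $\E e^{i\theta_s Z_s}=e^{-\theta_s^2\sigma_s^2/2}$ gives $\trig_{\eta_s}(\theta_s Z_s)=e^{-\theta_s^2\sigma_s^2/2}\sum_{n}(\pm)\tfrac{\theta_s^n}{n!}Z_s^{\diamond n}$, so for a singleton $s$, where the truncation is inactive since $n_s\geq t_s$, the coefficient of $Z_s^{\diamond n_s}$ equals $\pm\,\d_{\theta_s}^{r_s}\big(e^{-\theta_s^2\sigma_s^2/2}\theta_s^{n_s}/n_s!\big)$, which is at most $\tfrac{C_{r_s,\Lambda}\,n_s^{r_s}}{n_s!}(1+|\theta_s|)^{n_s+r_s}e^{-\theta_s^2\sigma_s^2/2}$. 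I would then split $\prod_{\set\in\Clus}|C_{\n_\set}|$ into its singleton factors, keeping $\prod_{s\in\sS}e^{-\theta_s^2\sigma_s^2/2}$ in front and absorbing the polynomial overhead $n_s^{r_s}\leq r_s!\,e^{n_s}$ and the fixed power $(1+|\theta_s|)^{r_s}$ into a fraction of that Gaussian, and its non-singleton factors, bounded by the first estimate. What remains is the sum $\sum_{|\n|=N+|\t_\sS|}\prod_{j=1}^{K}\tfrac{(C(1+|\Btheta|))^{n_j}}{n_j!}$, which is the $x^{N+|\t_\sS|}$-coefficient of $e^{CK(1+|\Btheta|)x}$ and so equals $\tfrac{(CK(1+|\Btheta|))^{N+|\t_\sS|}}{(N+|\t_\sS|)!}$; since $|\t_\sS|$ is bounded by $K\cdot\max_j t_j$, this is $\leq(C'(1+|\Btheta|))^{N}/N!$ up to a constant, giving the claim.

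The one point needing care is the bookkeeping of the $\theta$-derivatives: one has to check that $\d_{\theta_j}^{r_j}$ neither spoils the $1/\n_\set!$ decay nor kills the singleton Gaussian factor, and that the $O(1)$ overhead $(1+|\Btheta|)^{|\t_\sS|}$ generated when one shifts the summation index by $\t_\sS$ is harmless — in the applications it is absorbed, together with the factorial gain, against the small constant $L_0^{-\alpha/2}$ coming from Lemma~\ref{le:reduction_enhancement}. Everything else is a verbatim repetition of the one-frequency computation in \cite[Section~2]{Xu18}, the only structural difference being that the frequencies now fall into the two groups indexed by $\theta_{\fx}$ and $\theta_{\fy}$.
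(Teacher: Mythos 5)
Your derivation follows essentially the same route as the paper, which simply cites \cite[Lemma~2.2]{Xu18} here; you are supplying the argument that the paper defers to the reference, and the structure (explicit chaos-coefficient formula, Leibniz in $\theta_j$, Gaussian factor from singleton coefficients, multinomial generating function for the sum over $\n$) is the right one. Two details are slightly off and worth flagging, though both are harmless in the way the lemma is invoked.

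First, the exponent in the Gaussian factor. When $r_s>0$ the singleton coefficient carries the overhead $(1+|\theta_s|)^{r_s}$ on top of $e^{-\theta_s^2\sigma_s^2/2}$, and since $\sigma_s^2$ can equal $\Lambda^{-1}$ exactly, you cannot absorb that overhead and still land on the stated constant $\frac{1}{2\Lambda}$; your argument actually produces $e^{-c_\Lambda\sum_{s\in\sS}\theta_s^2}$ for some $c_\Lambda<\frac{1}{2\Lambda}$ (say $\frac{1}{4\Lambda}$). For $\r=0$ the clean constant is achieved, but for $\r\neq 0$ it must be weakened; this is consistent with the analogous Lemma~\ref{le:coefficients_bound_2}, which states $e^{-\theta_\ell^2/(4\Lambda)}$, and it does not affect any downstream estimate, where only $\exp(-\theta_\fx^2/C)$ for some $C$ is ever used.

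Second, the index shift $\n\mapsto\n-\t_\sS$ leaves a residual factor $(C(1+|\Btheta|))^{|\t_\sS|}$, as you note. You attribute its absorption to the $L_0^{-\alpha/2}$ factor from Lemma~\ref{le:reduction_enhancement}, but that is not quite the mechanism: $L_0^{-\alpha/2}$ controls the $\exp(C(1+|\Btheta|)^2/L_0^{\alpha})$ coming from summing the geometric-over-factorial series in $N$, whereas a fixed power $(1+|\Btheta|)^{|\t_\sS|}$ must instead be swallowed by the Gaussian factor $e^{-c\sum_{s\in\sS}\theta_s^2}$. That works here because Lemma~\ref{le:coefficients_bound_1} is only applied in the proof of Lemma~\ref{le:comparable_bound}, where $|\theta_\fx|\sim|\theta_\fy|$, so $|\Btheta|\lesssim|\theta_s|$ for every singleton $s$. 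Stated without that side information, the inequality would really need the $C$ in $(C(1+|\Btheta|))^N$ to be allowed to carry an extra fixed power of $(1+|\Btheta|)$, or the Gaussian exponent to be weakened as in the first point.

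With those two caveats recorded, the proposal is correct and matches the intended argument.
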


The above lemma is sufficient to prove Lemma~\ref{le:comparable_bound} since it gives a Gaussian decay in terms of the frequencies from the singleton sets. 
The Gaussian decay can also be obtained if there exists a cluster with a unique point in it, whose corresponding frequency is much larger than the frequencies of the other points in this cluster. This phenomena is precisely stated in the following lemma and is of great importance in the proof of Lemma \ref{le:singleton_bound} and Lemma \ref{le:fixed_bound}.

\begin{lem} \label{le:coefficients_bound_2}
    Let $A \subset [K]$ and $\ell \in A$ be such that
    \begin{equation*}
        |\theta_\ell| > 4 |A| \Lambda^2 \sum_{j \in A \setminus \{\ell\}} |\theta_j|\;.
    \end{equation*}
    Let $C_{\n_A}(\Btheta_A, \Z_A)$ be the coefficient of $\Z_A^{\diamond \n_A}$ in the chaos expansion of
    \begin{equation*}
        \prod_{j \in A} \d_{\theta_j}^{r_j} \tT_{(t_j - 1)} \big( \trig_{\eta_j}(\theta_j Z_j) \big)\;.
    \end{equation*}
    Then there exists $C>0$ depending on $|A|$, $\Lambda$, $\r$ and $\t$ only such that
    \begin{equation*}
        |C_{\n_A}(\Btheta_A, \Z_A)| \leq e^{-\frac{|\theta_\ell|^2}{4 \Lambda}} \cdot \frac{\big( C(1 + |\Btheta_A|)\big)^{|\n_A|}}{\n_A !}\;.
    \end{equation*}
\end{lem}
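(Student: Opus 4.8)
The plan is to run the argument of \cite[Lemma~2.2]{Xu18} (our Lemma~\ref{le:coefficients_bound_1}) with the single dominant frequency $\theta_\ell$ inside $A$ taking over the role that a singleton frequency plays there. The starting point is the identity \eqref{e:chaos_coefficient_formula}, which here reads
\begin{equation*}
    C_{\n_A}(\Btheta_A,\Z_A)=\frac{1}{\n_A!}\,\E\prod_{j\in A}h_j(Z_j),\qquad h_j(z):=\d_{\theta_j}^{r_j}\d_z^{n_j}\,\tT_{(t_j-1)}\!\big(\trig_{\eta_j}(\theta_j z)\big).
\end{equation*}
Writing $\trig_{\eta_j}(\theta_j z)=\sum_{\pm}b_\pm e^{\pm i\theta_j z}$ and carrying out the derivatives, and using that \emph{every} chaos coefficient of $\trig_{\eta_j}(\theta_j z)$ carries the factor $e^{-\theta_j^2\sigma_j^2/2}$ with $\sigma_j^2:=\E Z_j^2$, one expresses each $h_j$ as a sum of at most $C_{\r}$ terms, of two kinds: either $\theta_j^{\,n_j-s}z^{\,r_j-s}e^{\pm i\theta_j z}$ with $0\le s\le r_j$ and numerical prefactor $\le C_{r_j}n_j^{r_j}$, or --- only when $n_j<t_j$ --- a polynomial in $z$ of degree $<t_j$ with coefficients of size $\le C_{\r,\t}(1+|\theta_j|)^{t_j-1}e^{-\theta_j^2\sigma_j^2/2}$. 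Expanding the product over $j\in A$ into these choices, $C_{\n_A}$ becomes a sum of at most $C_{\r,|A|}$ terms, each of the form $\tfrac1{\n_A!}(\text{prefactor})\prod_j\theta_j^{\,n_j-s_j}\cdot\E[P\,e^{i\vartheta}]$, where $\vartheta=\sum_{j\in S}(\pm)\theta_j Z_j$ for some $S\subseteq A$ containing all indices of the first kind, $P$ is a polynomial of bounded degree in the $Z_j$, the prefactor is $\le C_{\r}^{|\n_A|}$, and the second‑kind factors contribute a harmless extra $\prod_{j\notin S}e^{-\theta_j^2\sigma_j^2/2}\le1$.

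The key step is the Gaussian‑character estimate. Since $(Z_j)_{j\in A}$ is jointly Gaussian, $\E[e^{i\vartheta}]=\exp(-\tfrac12\mathrm{Var}(\vartheta))$, and differentiating this in the $\theta_j$ shows $\E[P\,e^{i\vartheta}]$ equals the same Gaussian times a polynomial in $\Btheta_A$ of bounded degree $D$ with coefficients bounded by powers of $\Lambda$ (using $0<\E[Z_jZ_{j'}]\le\Lambda$ from Assumption~\ref{as:cor}). To bound the variance below, when $\ell\in S$ I would keep the $\ell$‑term explicit and discard the rest:
\begin{equation*}
    \mathrm{Var}(\vartheta)\ \ge\ \theta_\ell^2\,\E[Z_\ell^2]-2|\theta_\ell|\sum_{j\in S\setminus\{\ell\}}|\theta_j|\,\E[Z_\ell Z_j]\ \ge\ \frac{\theta_\ell^2}{\Lambda}-\frac{\theta_\ell^2}{2|A|\Lambda}\ \ge\ \frac{3\theta_\ell^2}{4\Lambda},
\end{equation*}
where the middle inequality uses $\E[Z_\ell^2]\ge\Lambda^{-1}$, $\E[Z_\ell Z_j]\le\Lambda$ and the hypothesis $|\theta_\ell|>4|A|\Lambda^2\sum_{j\in A\setminus\{\ell\}}|\theta_j|\ge4|S|\Lambda^2\sum_{j\in S\setminus\{\ell\}}|\theta_j|$; hence $\E[e^{i\vartheta}]\le e^{-3\theta_\ell^2/(8\Lambda)}$. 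If instead $\ell\notin S$, then $\ell$ is an index of the second kind and its factor already supplies $e^{-\theta_\ell^2\sigma_\ell^2/2}\le e^{-\theta_\ell^2/(2\Lambda)}$. The same hypothesis gives $|\Btheta_A|\le 2|\theta_\ell|$, so in both cases the bounded power $(1+|\Btheta_A|)^{D}$ --- together with the $n_j^{r_j}\le C_{\r}^{\,n_j}$ prefactors, the $\le C_{\r,|A|}$ term count and the $\Lambda$‑dependent constants --- is absorbed into the slack between $e^{-3\theta_\ell^2/(8\Lambda)}$ (resp.\ $e^{-\theta_\ell^2/(2\Lambda)}$) and the target $e^{-\theta_\ell^2/(4\Lambda)}$, and into the base $C$ of $(C(1+|\Btheta_A|))^{|\n_A|}$.

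Collecting the bookkeeping, $\tfrac1{\n_A!}$ from \eqref{e:chaos_coefficient_formula} combines with $\prod_j\theta_j^{\,n_j-s_j}\le(1+|\Btheta_A|)^{|\n_A|}$ (the second‑kind factors contributing only bounded powers of $1+|\Btheta_A|$, since $t_j-1$ is a fixed constant) to yield exactly $|C_{\n_A}(\Btheta_A,\Z_A)|\le e^{-\theta_\ell^2/(4\Lambda)}\,(C(1+|\Btheta_A|))^{|\n_A|}/\n_A!$, the degenerate case $|\n_A|=0$ being covered directly by the Gaussian‑character estimate. The part I expect to require the most care --- exactly as in \cite{Xu18} --- is the bookkeeping of the low‑chaos corrections of the operators $\tT_{(t_j-1)}$ and the verification that a factor $e^{-c\theta_\ell^2}$ survives no matter how the $\ell$‑factor is split between its two kinds; this is the two‑frequency analogue of \cite[Proposition~2.3 and Section~2.6]{Xu18}, and the observation that every chaos coefficient of $\trig_{\eta_j}(\theta_j z)$ carries the factor $e^{-\theta_j^2\sigma_j^2/2}$ is precisely what makes it go through.
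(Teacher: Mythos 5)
Your proposal is correct and follows essentially the same route as the paper's own proof: start from the chaos-coefficient formula \eqref{e:chaos_coefficient_formula}, carry out the $\theta$- and $z$-derivatives to split each factor $h_j$ into a trigonometric-type piece and a low-chaos polynomial piece carrying $e^{-\theta_j^2 \sigma_j^2/2}$, distinguish whether the $\ell$-factor is of trig type (your $\ell \in S$) or polynomial type (your $\ell \notin S$), and in the trig case use the Gaussian characteristic function together with the dominance hypothesis on $|\theta_\ell|$ to get a variance lower bound of order $\theta_\ell^2/\Lambda$, absorbing all polynomial prefactors in $|\Btheta_A|$ into the excess Gaussian decay. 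The paper phrases the same split via the ``two kinds of $\Xi_j$'' and is somewhat terser about the $\frac{1}{\n_A!}(C(1+|\Btheta_A|))^{|\n_A|}$ bookkeeping, but the ideas coincide.
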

\begin{proof}
    We rewrite the coefficient as
        \begin{equation*}
            C_{\n_{A}}(\Btheta_A,\Z_A) = \frac{1}{\n_A !} \E \prod_{j\in A}  \d_{\theta_{j}}^{r_{j}} \Big(\theta_{j}^{n_{j}} \tT_{(t_j-1-n_j)} (\trig_{\eta_j}(\theta_{j} Z_{j})) \Big)\;,
        \end{equation*}
        where $\tT_{t} = \id$ if $t<0$. 
        Distributing $r_{j}$ derivatives of $\theta_j$ into the two terms, the differentiation of $\theta_{j}^{n_j}$ yields an factor which is at most $n_{j}^{r_j}$. Then the product of these factors is bounded by $C^{|\n_A|\max_{j\in A} \{r_j\}}$ where $C$ is a constant. A typical part of the differentiation of the second term is $\E \prod_{j\in A}  \Xi_j(Z_j)$, where the function $\Xi_j(w):= w^{q^{(1)}_j} \trig_{\eta_j}(\theta_{j} w)$ or $\  e^{-\frac{\theta_j^2 \E w^2}{2}} w^{q^{(2)}_j}$,
        where the constant $q_j^{(1)},q_j^{(2)}$ are bounded by $t_j + r_j$. 
        We distinguish two cases by the choice of $\Xi_{\ell}$. For the case $\Xi_{\ell} (w)= e^{-\frac{\theta_{\ell}^2 \E w^2}{2}} w^{q^{(2)}_{\ell}}$, since $|\Xi_j(w)|\leq 1 + |w|^{t_j+r_j}$ and the Gaussianity of $Z_j$, we obtain the bound
        \begin{equation*}
            \Big| \E \prod_{j\in A}  \Xi_j(Z_j) \Big| \lesssim e^{-\frac{|\theta_{\ell}|^2}{2\Lambda}}.
        \end{equation*}
        For the case $\Xi_{\ell}(w) = w^{q^{(1)}_{\ell}} \trig_{\eta_{\ell}}(\theta_{\ell} w)$, it is easy to check that $\prod_{j\in A}  \Xi_j(Z_j)$ is a sum of at most $2^{|A|}$ terms with the form 
        \begin{equation*}
            \Big( \prod_{j\in A} Z_j^{q_j^{(k_j)}} \Big) \cdot \trig_{\pm}\Big( a_{\ell} \theta_{\ell} Z_{\ell} +\sum_{j\in A\setminus\{\ell\}} a_j \theta_j Z_j \Big),
        \end{equation*}
        where $a_{\ell}\in\{-1,1\}$, $a_{j}\in\{-1,0,1\}$ and $k_{j}\in\{1,2\}$ for $j\in A\setminus\{\ell\}$. Gaussianity implies that
        \begin{equation*}
        \begin{split}
            &\Big|\E\Big[\Big( \prod_{j\in A} Z_j^{q_j^{(k_j)}} \Big) \trig_{\pm}\Big( a_{\ell} \theta_{\ell} Z_{\ell} +\sum_{j\in A\setminus\{\ell\}} a_j \theta_j Z_j \Big)\Big] \Big|\\ \lesssim & (1+|\theta_{\ell}|)^{|A| \max\limits_{j\in A} (t_j+r_j)} \exp \bigg( -\frac{\E\big( a_{\ell} \theta_{\ell} Z_{\ell} +\sum_{j\in A\setminus\{\ell\}} a_j \theta_j Z_j \big)^2}{2} \bigg)\\
            \lesssim &\exp \bigg( -c_1 |\theta_{\ell}|^2 + c_2 |\theta_{\ell}| \Big( \sum_{j\in A\setminus\{\ell\}} |\theta_j| \Big) +c_3 \Big( \sum_{j\in A\setminus\{\ell\}} |\theta_j| \Big)^2 \bigg)\;,
        \end{split}
        \end{equation*}
        where $c_1 \geq \frac{7}{16\Lambda},\;|c_2| \leq \Lambda$ and $|c_3|\leq \Lambda$. Note that
        \begin{equation*}
            |\theta_{\ell}| > 4|A| \Lambda^2 \sum_{j\in A\setminus\{\ell\}} |\theta_j|\;,
        \end{equation*}
        then we obtain the bound for the typical part
        \begin{equ}
            \Big| \E \prod_{j\in A\setminus \{\ell\}}  \Xi_j(Z_j) \Big| \lesssim e^{-\frac{|\theta_{\ell}|^2}{4\Lambda}}.
        \end{equ}
        Thus we complete the proof.
\end{proof}

We are now ready to complete the proofs of Lemmas~\ref{le:comparable_bound},~\ref{le:singleton_bound} and~\ref{le:fixed_bound}. For Lemmas~\ref{le:comparable_bound} and~\ref{le:singleton_bound}, we have $K=4n$, and for $1 \leq j \leq 2n$, we have $Z_j = X_j$, $\theta_j = \theta_{\fx}$, and $t_j = m_1$, while for $2n+1 \leq j \leq 4n$, we have $Z_j = Y_{j-2n}$, $\theta_j = \theta_{\fy}$, and $t_j = m_2$.

\begin{proof} [Proof of Lemma~\ref{le:comparable_bound}]
    Applying Lemmas~\ref{le:reduction_enhancement} and~\ref{le:coefficients_bound_1} to the right hand side of \eqref{e:trig_expansion_bound}, and using the assumption that $\theta_{\fx}$ and $\theta_{\fy}$ are comparable, we see there exists $C > 0$ depending on $n$, $\Lambda$ and $\r$ only such that
    \begin{equation} \label{e:trig_bound_general}
        \begin{split}
        &\phantom{111}\bigg| \E \prod_{j=1}^{2n} \d_{\theta_{\fx}, \theta_{\fy}}^{\r} \fF(\theta_{\fx}, \theta_{\fy}, x_j, y_j) \bigg|\\
        &\leq \exp \Big(-\frac{\theta_{\fx}^2}{C} + \frac{C(1+|\theta_{\fx}|)^2}{L_{0}^\alpha}\Big) \cdot \sum_{k_1 = m_1}^{(m_1 \vee m_2)+1} \sum_{k_2 = m_2}^{(m_1 \vee m_2)+1} \E \prod_{j=1}^{2n} \big( X_{j}^{\diamond k_1} Y_{j}^{\diamond k_2} \big)\;.
        \end{split}
    \end{equation}
    By choosing $L_{0}$ sufficiently large (depending on $n$, $\Lambda$ and $\r$ only), we can guarantee that the exponential term on the right hand side of \eqref{e:trig_bound_general} is uniformly bounded in $(\theta_{\fx}, \theta_{\fy})$. This completes the proof of Lemma~\ref{le:comparable_bound}. 
\end{proof}

The proof for Lemma~\ref{le:singleton_bound} is similar, except using Lemma~\ref{le:coefficients_bound_2} instead of Lemma~\ref{le:coefficients_bound_1}. 

\begin{proof} [Proof of Lemma~\ref{le:singleton_bound}]
    We prove \eqref{e:comparable_bound} with the assumption $|\theta_{\fx}| > 100 n (1 + \Lambda^2) |\theta_{\fy}|$ and $\vec{x} \in \sS_{2n}$. The case with $|\theta_{\fy}| > 100 n (1 + \Lambda^2) |\theta_{\fx}|$ and $\vec{y} \in \sS_{2n}$ is identical. 

    By definition of $\sS_{2n}$ and that $L = 3n L_0$, we know there exists $\ell \in \{1, \dots, 2n\}$ and $\set^* \in \Clus$ such that
    \begin{equation*}
        \set^* \cap \{1, \dots, 2n\} = \{\ell\}\;.
    \end{equation*}
    In other words, $x_{\ell}$ is in the cluster indexed by $\set^*$ but no other $x_j$ is if $j \neq \ell$. Since $|\theta_{\fx}| > 100 n (1 + \Lambda^2) |\theta_{\fy}|$, by Lemma~\ref{le:coefficients_bound_2}, we have
    \begin{equation*}
        |C_{\n_{\set^*}}(\Btheta_{\set^*}, \Z_{\set^*})| \leq e^{-\frac{\theta_{\fx}^2}{4 \Lambda}} \cdot \frac{\big( C(1 + |\theta_{\fx}|) \big)^{|\n_{\set^*}|}}{\n_{\set^*}!}\;.
    \end{equation*}
    As a consequence, we have
    \begin{equation*}
        \sum_{|\n| = N} \prod_{\set \in \Clus} |C_{\n_{\set}}(\Btheta_{\set}, \Z_{\set})| \leq e^{-\frac{\theta_{\fx}^2}{4 \Lambda}} \cdot \frac{\big( C (1+|\theta_{\fx}|) \big)^{N}}{N!}\;.
    \end{equation*}
    Applying the above bound together with Lemmas~\ref{le:coefficients_bound_2} to the right hand side of \eqref{e:trig_expansion_bound}, we get the same bound as the right hand side of \eqref{e:trig_bound_general}. One can proceed as before to choose $L_0$ sufficiently large to conclude Lemma~\ref{le:singleton_bound}. 
\end{proof}

We now turn to Lemma~\ref{le:fixed_bound}. First we write the left hand side of \eqref{e:fixed_bound_alpha} as a linear combination of terms which are of the form \eqref{e:trig_general_form} with $K=2n+1$. In fact, $\big( \d_{\theta_{\fx}}^{r_1} \tT_{m_1-1} \big( \trig_{\zeta_1}(\theta_{\fx} X) \big) \big)^{2n}$ is a linear combination of terms of the form $X^{\ell} \trig_{\pm}(p \theta_{\fx} X)$ where $p \in \{-2n, \dots, 0, \dots, 2n\}$, $\ell$ ranges over non-negative integers with a fixed upper bound depending on $\r$ and $n$ only, and the coefficients in the linear combination are all independent of $\theta_{\fx}$. Since $X^{\ell} \trig_{\pm}(p \theta_{\fx} X)$ is proportional to $\d_{p \theta_{\fx}}^{\ell} \trig_{\pm}(p \theta_{\fx} X)$, it now suffices to control the quantity
\begin{equation} \label{e:aim_quantity}
    \E \Big( \d_{p \theta_{\fx}}^{\ell} \trig_{\pm}(p \theta_{\fx} X) \; \prod_{j=1}^{2n} \d_{\theta_{\fy}}^{r_{2}} \tT_{(m_2 - 1)} \big( \trig_{\zeta_2}(\theta_{\fy} Y_j) \big) \Big)\;.
\end{equation}
This is in the form of \eqref{e:trig_general_form} with $K=2n+1$, $z_j = y_j$ and $\theta_j = \theta_{\fy}$ for $j \leq 2n$, and $z_{2n+1} = x$ and $\theta_{2n+1} = p \theta_{\fx}$. We are now ready to complete the proof. 

\begin{proof} [Proof of Lemma~\ref{le:fixed_bound}]
We carry out the proof in two steps. We first show the bound
\begin{equation} \label{e:fixed_first_bound}
    \begin{split}
    &\phantom{111}\E \Big( \d_{p \theta_{\fx}}^{\ell} \trig_{\pm}(p \theta_{\fx} X) \; \prod_{j=1}^{2n} \d_{\theta_{\fy}}^{r_2} \tT_{(m_2 - 1)} \big( \trig_{\zeta_2}(\theta_{\fy} Y_j) \big) \Big)\\
    &\lesssim \E \bigg[ \Big( \sum_{k=0}^{(m_1\vee m_2)+1}X^{\diamond k} \Big) \Big( \prod_{i=1}^{2n}  \sum_{k=m_2}^{(m_1\vee m_2)+1}Y_i^{\diamond k}\Big) \bigg]\;,
    \end{split}
\end{equation}
and then control the right hand side of \eqref{e:fixed_first_bound} by that of \eqref{e:fixed_bound_alpha}. 

\begin{flushleft}
    \textit{Step 1.}
\end{flushleft}
We consider the cases $p = 0$ and $p \neq 0$ separately. When $p=0$, if $\sS \cap \{z_1,\cdots,z_{2n}\} \neq \emptyset$, we obtain a factor $e^{-\frac{ {\theta_{\fy}}^2}{2\Lambda}}$ in the bound on coefficients as in Lemma~\ref{le:coefficients_bound_1}. If $\sS \cap \{z_1,\cdots,z_{2n}\} = \emptyset$, we apply Lemma~\ref{pr:no_singleton} control \eqref{e:aim_quantity} by the right hand side of \eqref{e:fixed_first_bound} since the sum of $X^{\diamond k}$ starts from $k=0$. This gives the bound \eqref{e:fixed_first_bound} when $p=0$. 

If $|p| \geq 1$, we have $|\theta_{2n+1}| \geq |\theta_{\fx}|$. Let $\set^*$ be the cluster containing $2n+1$. Then Lemma~\ref{le:coefficients_bound_2} applied to $\set^*$ gives us a decay factor $e^{-\frac{\theta_{\fx}^2}{4 \Lambda}}$ in the coefficient in chaos expansion. We can again choose $L_0$ large enough to obtain \eqref{e:fixed_first_bound}. 

\begin{flushleft}
    \textit{Step 2.}
\end{flushleft}
For any symmetric matrix $D=(d_{ij})_{0\leq i,j\leq 2n}$, we write
\begin{equation*}
    d_{i} := \sum_{j \neq i} d_{ij} = \sum_{j \neq i} d_{ji}\;.
\end{equation*}
Let $\dD$ and $\dD^*$ be the spaces of off-diagonal symmetric matrices with integer elements with the further restriction that
\begin{equation} \label{e:degree}
    0 \leq d_{0} \leq (m_1 \vee m_2)+1\;, \quad m_2 \leq d_{i} \leq (m_1 \vee m_2) + 1\;, \quad \forall i \geq 1 \;
\end{equation}
for $D \in \dD$, and
\begin{equation*}
    d_{0i}^{*} \equiv 0\;, \quad m_2 \leq d_{i}^* \leq (m_1 \vee m_2) + 1\;, \quad \forall i \geq 1
\end{equation*}
for $D^* \in \dD^*$, where we use $(d_{ij}^*)$ to denote elements in the matrix $\dD^*$, and $d_{i}^*$ is defined in the same way. Note that $\dD^* \subset \dD$ in further requiring that $d^*_{0i} = 0$. For $D \in \dD$, we define
\begin{equation*}
    W_D:= \bigg(\prod_{j=1}^{2n} (\E XY_{j})^{d_{0j}}\bigg) \bigg(\prod_{1\leq i<j \leq 2n} (\E Y_{i}Y_{j})^{d_{ij}}\bigg).
\end{equation*}
By Wick's formula, the left hand side of \eqref{e:fixed_first_bound} can be written as
\begin{equation} \label{e:Wick}
    \E \bigg[ \Big( \sum_{k=0}^{(m_1\vee m_2)+1}X^{\diamond k} \Big) \Big( \prod_{i=1}^{2n}  \sum_{k=m_2}^{(m_1\vee m_2)+1}Y_i^{\diamond k}\Big) \bigg] = \sum_{D \in \dD} W_D.
\end{equation}
We now show that for each $D \in \dD$, there exists $D^* \in \dD^*$ such that $W_{D} \lesssim \eps^{-\alpha((m_1\vee m_2)+1)} W_{D^*}$. This will imply that $W_{D}$ is bounded by the right hand side of \eqref{e:fixed_bound_alpha} and hence conclude the proof. 

Fix $D \in \dD$. If there exist $i,j$ with $1\leq i<j \leq 2n$ such that $d_{0i},d_{0j}>0$, then we perform the operation $D \mapsto D'$ by
\begin{equation*}
    d_{0i} \mapsto d_{0i}-1\;, \quad d_{0j} \mapsto d_{0j}-1\;, \quad d_{ij} \mapsto d_{ij}+2\;,
\end{equation*}
and the same operation to $d_{i0}, d_{j0}$ and $d_{ji}$. This operation keeps $D' \in \dD$. By the correlation bound (\cite[Proposition~2.1]{Xu18})
\begin{equation*}
    \E (XY_{i})\; \E (XY_{j}) \lesssim \E (Y_{i}Y_{j}), 
\end{equation*}
we have $W_{D} \lesssim W_{D'}$. We repeat this operation until there is at most one $i \in \{1, \dots, 2n\}$ such that $d_{0i}>0$. Thus, for any $D \in \dD$, there exists a symmetric matrix $D^{(1)} \in \dD$ with $W_{D} \lesssim W_{D^{(1)}}$, and there exists $i^* \in \{1, \dots, 2n\}$ such that $d^{(1)}_{0j} = 0$ for all $j \neq i^*$. We assume without loss of generality that this $i^*=1$. 

Starting with $D^{(1)}$, we perform the operation $D^{(1)} \mapsto D^{(2)}$ by setting
\begin{equation*}
    d^{(1)}_{01} \mapsto d^{(2)}_{01} = 0\;, \qquad d^{(1)}_{10} \mapsto d^{(2)}_{10} = 0\;. 
\end{equation*}
This removes the factor $(\E XY_1)^{d_{01}} \leq \Lambda^{d_{01}}$ from $W_{D^{(1)}}$ and hence gives the bound
\begin{equation*}
    W_{D^{(1)}} \lesssim W_{D^{(2)}}. 
\end{equation*}
Note that $D^{(2)}$ now satisfies $d_{0i}^{(2)} = 0$ for all $i$, $d^{(2)}_{ij} = d_{ij}$ for all $i,j \neq 1$, and $d^{(2)}_{1} \leq (m_1 \vee m_2) + 1$. If $d^{(2)}_{1} \geq m_2$, then $D^{(2)} \in \dD^*$ and hence we take $D^* = D^{(2)}$ to be the desired element in $\dD^*$. 

We now consider the case $d_{1}^{(2)} < m_2$, where $D^{(2)}$ does not belong to $\dD^*$. In this case, there exist $i,j \neq 1$ with $i \neq j$ such that $d_{ij} > 0$. Hence, we perform the operation $D^{(2)} \mapsto \tilde{D}$ by
\begin{equation*}
    d^{(2)}_{1i} \mapsto \tilde{d}_{1i} = d^{(2)}_{1i}+1\;, \quad d^{(2)}_{1j} \mapsto \tilde{d}_{1j} = d^{(2)}_{1j}+1\;, \quad d^{(2)}_{ij} \mapsto \tilde{d}_{ij} = d^{(2)}_{ij} - 1\;, 
\end{equation*}
and the same operation for $d^{(2)}_{i1}$, $d^{(2)}_{j1}$ and $d^{(2)}_{ji}$. This operation adds $d^{(2)}_{1}$ by $2$ and leaves $d^{(2)}_{j}$ unchanged for other $j$. Furthermore, it changes $W_{D^{(2)}}$ by a factor
\begin{equation*}
    \frac{(\E Y_1 Y_i) \; (\E Y_1 Y_j)}{\E Y_i Y_j} \gtrsim  \eps^{2\alpha}\;,
\end{equation*}
which gives the bound
\begin{equation*}
    W_{D^{(2)}} \lesssim \eps^{-2\alpha} W_{\tilde{D}}\;.
\end{equation*}
Hence, performing this operation at most $\frac{m_2 + 1}{2}$ times will give a symmetric matrix $D^{(3)}$ such that $d^{(3)}_{1} \in \{m_2, m_2 + 1\}$ and
\begin{equation*}
    W_{D^{(2)}} \lesssim \eps^{-\alpha(m_2+1)} W_{D^{(3)}}\;.
\end{equation*}
It is straightforward to check that $D^{(3)} \in \dD^*$. Hence, we take $D^* = D^{(3)}$ in this case and conclude the proof. 
\end{proof}
\endappendix

\bibliographystyle{Martin}
\bibliography{Refs}

\end{document}